\numberwithin{equation}{section}
\theoremstyle{plain}
\newtheorem{theorem}{Theorem}[section]
\newtheorem{proposition}[theorem]{Proposition}
\newtheorem{definition}[theorem]{Definition}
\theoremstyle{remark}
\newtheorem{example}[theorem]{Example}
\newcommand{\PP}{\mathbb{P}}
\newcommand{\tF}{\tilde{F}}
\newcommand{\ttheta}{\tilde{\theta}}
\newcommand{\tG}{\tilde{G}}
\newcommand{\iidsim}{\overset{i.i.d.}{\sim}}
\newcommand{\indsim}{\overset{indep}{\sim}}
\newcommand{\indep}{\perp \!\!\! \perp}
\newcommand{\Polya}{P\'olya }
\newcommand{\hF}{\hat{F}_n}
\newcommand{\Norm}{\mathcal{N}}
\begin{document}

\begin{frontmatter}
\title{Exchangeability, prediction and predictive modeling in Bayesian statistics}

\runtitle{Prediction and exchangeability}

\begin{aug}
\author[A]{\fnms{Sandra}~\snm{Fortini}\ead[label=e1]{sandra.fortini@unibocconi.it}}
\and
\author[B]{\fnms{Sonia}~\snm{Petrone}\ead[label=e2]{sonia.petrone@unibocconi.it}}

\address[A]{Sandra Fortini is Associate Professor of Statistics at the Department of Decision Sciences, Bocconi University \printead[presep={\ }]{e1}.}
\address[B]{Sonia Petrone is Professor of Statistics, Department of Decision Sciences and Bocconi Institute of Data Science and Analytics, Bocconi University
\printead[presep={\ }]{e2}.}
\end{aug}

\begin{abstract}
There is currently a renewed interest in the Bayesian {\em predictive} approach to statistics. 
This paper offers a review 
on foundational concepts and focuses on \lq predictive modeling\rq, which by directly reasoning on prediction, bypasses inferential models or may characterize them.   
We detail predictive characterizations in exchangeable and partially exchangeable settings, for a large variety of data structures, and hint at new directions.
The underlying concept is that Bayesian predictive rules are probabilistic {\em learning} rules, formalizing through conditional probability how we learn on future events given the available information.
This concept has implications in any statistical problem and in inference, from classic contexts to less explored challenges, such as providing Bayesian uncertainty quantification to predictive algorithms in data science, as we show in the last part of the paper. 
The paper gives a historical overview, but also includes a few new results, presents some recent developments and poses some open questions.

\medskip

\end{abstract}

\begin{keyword}
\kwd{Bayesian foundations}
\kwd{Predictive characterizations}
\kwd{Bayesian nonparametrics}
\kwd{Predictive sufficiency}
\kwd{Partial exchangeability}
\kwd{Recursive algorithms}
\end{keyword}

\end{frontmatter}

\section{Introduction}\label{sec:introduction}
There is currently a renewed 
interest in the {\em Bayesian predictive approach} to statistics.
The approach is just Bayesian, 
but the additional adjective \lq predictive' 
underlines conceptual emphasis on predictive tasks; while the more common 
\lq inferential approach' is centered on inference on parameters, here one focuses on observable quantities and prediction, evaluates models and priors based on their implications on prediction, and even deduce models and parameters from the predictive rule (the long list of references includes \cite{degrootFienberg1982}, \cite{Geisser1993-book}, \cite{goldstein1999}, \cite{clarke2018}). With the major focus on prediction in data science and machine learning (\cite{breiman2001}, \cite{shmueli2010}), this approach appears natural and is adopted in novel  research directions  (\cite{hahnMartinWalker2018}, \cite{fortini2020-newton}, \cite{fong2021}, \cite{holmes2022}
\cite{rigo2023},   \cite{walker2023}).
In fact, the predictive approach has a long tradition in Bayesian statistics and is rooted in its same foundations 
(de Finetti \cite{definetti1931}, \cite{deFinetti1937}, \cite{cifarelli1996}, % Dubins and 
Savage \cite{savage1954}, \cite{dubinsSavage1965},  and %authors as  
Diaconis  
\cite{Diaconis1988-recentProgresses}, \cite{diaconisFreedman1980}, 
Regazzini \cite{regazzini-1999-valencia}, \cite{fortini2000}, 
Dawid \cite{dawid1984} and more; see the book by Bernardo and Smith   \cite{bernardoSmith1994}). 

The first aim of this paper is to offer a review, starting from foundations and going through methods for predictive constructions in a variety of contexts, with focus on exchangeable structures, which play a basic role.
Thus we also review, from a  predictive perspective,  the use of exchangeability and of forms of partial exchangeability in Bayesian statistics. 

A second aim of the paper is to show how a Bayesian predictive approach can be usefully adopted in less explored situations, beyond exchangeability; in particular, 
(a) to obtain computationally tractable approximations of (exchangeable) Bayesian inferences and (b) to provide  Bayesian uncertainty quantification of some classes of algorithms (a novel example we provide is online gradient descent),  without the need of an explicit likelihood and prior law. This is developed in the last part of the paper  and relies  on the foundational principles 
that we discuss 
in the first part. 

Along our review, we include a few novel results and  open problems.  
We hope that the paper may be of some interest, especially to young researchers, as both a reminder of the foundations and of some remarkable results,  and as an inspiration for new work.

\subsection{Basic concepts and paper overview}
In Bayesian statistics, prediction is expressed through the {\em predictive distribution} of future observations given the available information. In the simplest setting (and with an abuse of notation, in this introduction identifying distributions through their arguments) one has a sample from a sequence of random variables (r.v.'s) $(X_n)_{n \geq 1}$, 
has specified a conditional model $(X_1, \ldots, X_n) \mid \ttheta\sim$ $p(x_1,\dots,x_n \mid \ttheta)$, $n \geq 1$ 
and a prior distribution $\pi$ on $\ttheta$, and computes the predictive density of $X_{n+1}$ given $x_{1:n} {\equiv} (X_1=x_1, \ldots, X_n=x_n)$  as 
\begin{equation} \label{eq:pred} %p_n(x_{n+1}) 
p(x_{n+1} \mid x_{1:n})
= \int_\Theta p(x_{n+1} \mid x_{1:n}, \theta) d\pi(\theta \mid x_{1:n}),  
\end{equation} 
where $\pi(\cdot \mid x_{1:n})$ is the posterior distribution of $\ttheta$ (we use the notation $\ttheta$ to underline that it is a r.v.). Summaries of the predictive distribution naturally include %provide 
point prediction and predictive credible intervals. Thus, while standard frequentist prediction would 
move from a model $(X_1, \ldots, X_n) \sim p_\theta(x_1, \ldots, x_n)$ and  deal with parameters' uncertainty by plugging their estimates into 
$p_{\theta}(x_{n+1}\mid x_{1:n})$,  
in the Bayesian approach uncertainty is taken into account  by  \lq averaging' the possible models  $p(x_{n+1} \mid x_{1:n}, \theta)$ with respect to the posterior distribution of $\ttheta$. 

We already see distinctive features of Bayesian prediction; but this all may sound as
\lq the usual Bayesian story'. Actually, Bayesian statistics is often described as consisting of %in 
assigning a prior on $\ttheta$ and using  Bayes rule to compute the posterior distribution. Obtaining the predictive distribution as in (\ref{eq:pred}) is then just a matter of computations. 
Of course, Bayesian statistics is deeper than that; and a first basic concept we should recall for this paper is  the interpretation of the Bayesian predictive distribution.
 
Bayesian statistics is about acting under uncertainty, or incomplete {\em information}.  This can be information from the data, from domain knowledge, etc; the point is %becomes 
to formalize 
that information, and probability  is the prescribed formal language for this. 
If probability  describes (incomplete) information,
then the evolution of information, or {\em learning}, is expressed through {\em conditional probabilities}. In particular, learning on the next observation based on the observed $x_{1:n}$ is expressed through the conditional distribution 
$p(x_{n+1} \mid x_{1:n})$. This leads us to the interpretation of the Bayesian predictive distribution: it is    a {\em learning rule} that formalizes, through conditional probability, how we learn about future events %observations 
 given the available information. (Thus, it is not meant  
 as the \lq physical mechanism' generating $X_{n+1}$ given the past -- in the classic setting, 
that might be the interpretation of $p_{\theta_0}(x_{n+1}\mid x_{1:n})$ for a true $\theta_0$). 
See e.g. \cite{fortini2016-deFinettiview}.

This principle is the basis of our discussion in the paper, and we return on it in a rather novel way in Section \ref{sec:algorithms}. Here, to see a first implication, let us consider the basic case, random sampling. 
In the Bayesian approach, one does not assume independence, as it would give $p(x_{n+1} \mid x_{1:n})=p(x_{n+1})$, expressing no learning. 
One would rather elicit a  joint probability $p(x_1, \ldots, x_n)$  that expresses dependence: 
not because the $X_i$ are \lq physically' dependent, but because each $X_i$ brings information about the others. The $X_i$ are dependent in our probability assessment formalizing the learning process. 
In random sampling, the natural assessment is that the order 
of the observations does  not bring any information: the $X_i$ are exchangeable.
Then they  are only {\em conditionally} independent. 
We devote substantial space in the paper to exchangeability; simply because 
it is the natural predictive requirement 
in random sampling, and random sampling is the basic setting.
The fundamental  concepts 
are treated in \cref{sec:exch}.

In practice, we usually specify the joint distribution $p(x_1, \ldots, x_n)$, for any $n$, with the help of models and parameters 
\begin{equation} \label{eq:inf-model}
p(x_1, \ldots, x_n)=\int p(x_1, \ldots, x_n \mid \theta) d \pi(\theta);
\end{equation}
and compute the predictive distribution as in (\ref{eq:pred}). But, especially if interest is in prediction, we could in principle bypass the inferential model and directly specify the predictive distributions - typically, the one-step-ahead predictions,  which give, for any $n$, \begin{equation} \label{eq:pred-model}
p(x_1, \ldots, x_n)=p(x_1) p(x_2\mid x_1) \cdots p(x_n \mid x_{1:n-1}).
\end{equation}
In this {\em predictive approach}, that we refer to as \lq \lq predictive modeling", 
one reasons on the observable quantities, for example on symmetry properties as in the case of exchangeability, and on what is the relevant information in the sample for prediction, or desirable properties of the predictive learning rule.  This is well rooted in  Bayesian foundations and is particularly attractive in complex settings where models and parameters tend to lose interpretability. 
Still, predictive modeling  may seem quite  impracticable; it has in fact a long tradition, 
however the available literature is rather fragmented. Thus in Section \ref{sec:methodsConstruction} our effort is  to trace 
concepts and methods that may provide a methodological basis to predictive constructions. 
We mostly refer to exchangeable settings, but   
a predictive approach can be taken for any kind of data structures;  
see \cite{rigo2023}.

{\em Prediction and inference.} 
Predictive modeling is also intriguing as a form of \lq \lq Bayesian learning without the prior". In fact, an inferential model and a prior law may be implicitly subintended, and unveiling them is important both practically and conceptually.
This is typically obtained through representation theorems; roughly speaking, one can move from the predictive specification (\ref{eq:pred-model}) of the joint distribution $p(x_1, \ldots, x_n)$, for any $n \geq 1$,  and might {\em represent} it in a form as  (\ref{eq:inf-model}); see Section \ref{sect:exch-caratterization}.  
Although an inferential model is not
needed  in a purely predictive approach, representation theorems  
significantly provide the link from prediction to inference. 
de Finetti's representation theorem 
has a central role in Bayesian statistics as it leads 
from foundations, where probability is expressed on {\em observable} events (see Section 2), to inference. 
de Finetti moves from exchangeability of the observable $X_i$, and the representation theorem  
gives the theoretical justification of the basic Bayesian inferential scheme where the parameter $\ttheta$ is random and the $X_i$ are conditionally i.i.d. given $\ttheta$, as an implication of exchangeability.
Moreover, it shows how the inferential model is related to frequencies. In \cref{sec:prediction-frequency}, we will underline how {\em prediction}  is related to frequencies, thus to the inferential model, and in particular we give a result (\cref{sec:predictive-based appr}) showing how the uncertainty expressed in the posterior distribution is determined by the way the predictive distribution learns from the data. 

Representation theorems have been extended in numerous directions (Sect 2.3 of \cite{cifarelli1996} includes extensive references) and predictive constructions are applied well  beyond simple random sampling.  
In \cref{sec:partial exchangeability}  we consider more structured data for which it is natural to express a predictive judgment of  
{\em partial} exchangeability;  we provide predictive characterizations of some %specific 
forms of partial exchangeability (Theorems \ref{th:exch-by-pred}, \ref{th:partial-by-pred} and \ref{th:markov-by-pred}), and review de Finetti-like representation theorems, which give the predictive-theoretical basis in many problems including stochastic design regression (as reducible to random sampling), fixed design regression and  multiple experiments (\cref{sec:partial}), Markov chains (potentially, models for temporal data based on  Markov chains) (\cref{sec:markov}) and arrays and networks data (\cref{sec:RCE}). 
There are authoritative and comprehensive references on the theory of exchangeability, see Kingman \cite{Kingman78exchangeability}, Aldous \cite{aldous1985}, %AUSTIN...??, 
Kallenberg \cite{kallenberg2005}, to which we refer interested readers. The more specific aim of our - necessarily brief - review is to point out  some main aspects that we believe are relevant in Bayesian statistics from a predictive perspective. 

{\em Open directions}. Although the above discussion shows that the predictive approach is theoretically sound and that predictive modeling can be applied in many contexts,
we acknowledge that proceeding solely through predictive constructions  may not be easy, especially if one wants to satisfy exchangeability constraints. On the other hand -- and this is a further point we want to make in this paper -- there are many predictive algorithms in data science that lack clean uncertainty quantification, or there are, in fields such as economics, subjective predictions implicitly guided by the agent's explanation of the phenomena, that would be interesting to reveal 
(see e.g. \cite{AugenblickRabin2021}). A Bayesian predictive approach can be usefully employed. In particular, we show %\st{how} 
that some classes of recursive predictive algorithms can in fact be read 
as Bayesian predictive learning rules, 
that assume exchangeability only asymptotically. The relevance of this approach is not merely theoretical, but allows to understand the underlying modeling assumptions and to provide formal  uncertainty quantification, and can lead to principled extensions. This is treated in Section \ref{sec:algorithms}. 

Brief final remarks conclude the paper.  All the proofs are collected in the Supplement \citep{supplement}.

\subsection{Preliminaries and notation}
In this paper, all the random variables take values in a Polish space $\mathbb X$, endowed with its Borel sigma-algebra $\mathcal X$. 
The topology on spaces of probability measures is implicitly assumed as the topology of weak convergence. Hence, for any $P_n$ and $P$,  $P_n\rightarrow P$ means weak convergence. 

The underlying probability space $(\Omega,\mathcal F,\PP)$ for a random sequence $(X_n)_{n\geq 1}$ is implicitly assumed to be the canonical space $(\mathbb X^\infty,\mathcal X^\infty, \PP)$, where $\PP$ is the probability law %distribution 
of the sequence, denoted as $(X_n)_{n\geq 1}\sim \PP$. We write $\PP$-a.s. for \lq\lq with $\PP$-probability one''. We use the short notation $x_{1:n}$ for $(X_1=x_1,\dots,X_n=x_n)$.
All conditional distributions must be understood as regular versions. 
For random variables taking values in Euclidean spaces, we denote with the same symbol a probability measure and the corresponding distribution function. 
Sequences are denoted as $(Z_n)_{n \geq 1}$ and arrays as $[Z_{i,j}]_{i \in I, j \in J}$. 

\section{Exchangeability and prediction}\label{sec:exch}

We begin by recalling the foundational role of prediction in Bayesian statistics and the  notion of exchangeability as a basic predictive judgment. 

\medskip

Bayesian statistics has decision-theoretic  roots in the work of the 1920s
in mathematical logic aimed at founding a normative theory of rational decisions under risk (Ramsey \citep{ramsey1926}, and later, Savage \cite{savage1954}, \cite{dubinsSavage1965}; two book references are  \cite{bernardoSmith1994} and \cite{parmigianiInue2029}). In this perspective, probability arises  as the prescribed rational ({\em coherent}; see \cite{cifarelli1996}) formalization of the agent's information on uncertain events, as advocated in the foundations of modern Bayesian statistics by 
Bruno de Finetti; see e.g. \cite{deFinetti1937} and 
\citep{deFinetti1970}. 
de Finetti emphasises  that probability is %can only be 
expressed on {\em observable} events 
(we do not discuss, here, issues on the notions of observability or of imprecise probability; see e.g. \cite{williams1976}). In this perspective, unobservable parameters are not assigned 
a probability {\em per se}, but simply as an intermediate step for ultimately expressing the %information on 
 probability of observable events. They are just 
a tool in the learning process that goes from past observable events to prediction of future events. 
Of course, parameters may be interpretable if not strictly observable, and inference is a core problem; but it is prediction that has a foundational role. 

The focus on probability of observable events 
is well demonstrated in de Finetti's notion and use of exchangeability. As mentioned in the Introduction, in the context of homogeneous replicates of an experiment 
(random sampling) the researcher would judge that the labels of the 
$X_i$ %and judges that "labels 
\lq \lq do not matter". This is formalized through a joint probability law that is invariant under permutations of the labels:
$$ (X_1, \ldots, X_n) \overset{d}{=} (X_{\sigma(1)},\ldots, X_{\sigma(n)}) $$
for each permutation $\sigma$ of $(1, \ldots, n)$, where $\overset{d}{=}$ means equal in distribution.  
An infinite sequence $(X_n)_{n \geq 1}$ 
is exchangeable if it is invariant to each finite permutation of $\{1, 2, \ldots\}$, i.e. each permutation that only switches a finite set of indexes. 
Exchangeability is an elegant % beautiful 
probabilistic structure %concept 
and  exchangeable processes arise in many fields.
In de Finetti's work on Bayesian foundations, 
however, exchangeability is not meant as a physical property of the sequence $(X_n)_{n\geq 1}$, but as an expression of the agent's information. 

\begin{example} 
Consider random sampling from a two-color urn, and let $X_i=1$ if the color of the ball picked on the $i$-th draw is white, and zero otherwise. The agent judges that the order of the draws is not informative and the sequence $(X_n)_{n\geq 1}$ is exchangeable. By the representation theorem (\cref{sect:exch-caratterization}), $(X_n)_{n\geq 1}$ has the same probability law of a sequence %$(Y_n)$ 
arising from an experiment where the urn composition %$\theta$ 
is picked from a \lq prior' distribution and balls %the $Y_i$ 
are then sampled at random with replacement. % from the urn with composition $\theta$. 
The physical experiment is not as such: the urn composition is not sampled, it %not picked, it 
is given although unknown.  Here, exchangeability is not referring to the mechanism generating the data, but to the way we use information.
$\square$\end{example}
We should keep in mind this use of exchangeability in what follows. See also \cite{fortini2023}, and the discussion in \cite{vonPlato1982} for the more general setting of stationary sequences.

\medskip
Although exchangeability is a predictive requirement, it has an immediate inferential implication, 
established by the celebrated de Finetti's representation theorem. 

\begin{theorem}[Law of large numbers and representation theorem for infinite exchangeable sequences]\label{th:representationExch}
Let $(X_n)_{n\geq 1}$ be an infinite exchangeable sequence and denote by $\PP$ its probability law. Then:
    \begin{itemize}
\item[{\rm i)}]
With $\PP$-probability one, the sequence of the  empirical distributions 
$\hat F_n=\frac 1 n \sum_{i=1}^n\delta_{X_i}$
converges weakly as $n\rightarrow\infty$ to a random distribution $\tF$: %; 
$$ \hat{F}_n \rightarrow \tF ; $$ 
\item[{\rm ii)}] 
For all $n \geq 1$ and measurable sets $A_1, \ldots, A_n$, 
\begin{equation} \label{eq:inferential} 
\PP(\!X_1\!\! \in\! \!A_1, \ldots,\! X_n\!\! \in\! \!A_n\!)\!=\! \!\int \! \prod_{i=1}^n\! F(A_i) d\pi\! (\!F),
\end{equation}
where $\pi$ is the probability law of $\tF$.
\end{itemize}
\label{th:representationTh-exch}
\end{theorem}

See Aldous \cite{aldous1985}, who refers to $\tF$ as the {\em directing random measure} of the exchangeable sequence $(X_n)_{n\geq 1}$. The re\-pre\-sentation {\rm ii)} is often phrased as 
\lq \lq $X_i\mid\tilde F=F\stackrel{i.i.d.}{\sim} F$, with $\tF \sim \pi$\rq \rq; a subtle difference is that this latter formulation may (in principle, misleadingly) suggest the existence of a true $F$. In Bayesian inference, $\tF$ plays the role of the statistical model, and its probability law is  the prior. 
The prior law is unique, and is a probability measure on the class of all the possible distributions on the sample space. The representation theorem is a high-level result: the probability law $\PP$ characterizes the random $\tF$; in other words, it shapes it (the model) through its implied distribution (the prior). In applications, one has to choose a specific law $\PP$. In particular, further 
information may restrict the support of the prior to a parametric class, so that  $X_i \mid \ttheta \iidsim p(\cdot \mid \ttheta)$ (see Section \ref{subsec:sufficiency}). In this paper we will mostly
keep the general  
framework (\ref{eq:inferential}).

\medskip

\noindent {\em Remark}. 
Note that $\tF$ in \cref{th:representationExch} 
is random; as the limit of the empirical distributions, it  depends on $(X_1, X_2, \ldots)$. Given a sample path $\omega=(x_1, x_2, \ldots)$, we have a realization of the random $\tF$, that we denote by $\tF(\cdot)(\omega)$.
In fact, for i.i.d. observations from a
distribution $F$, the limit of the empirical distribution is $F$;  
the fact that the limit is instead random for exchangeable sequences may sound weird. Formally, this is because exchangeable sequences are {\em mixtures of i.i.d. sequences}; let us
give some intuition. 
By the representation theorem, an exchangeable sequence $(X_n)_{n\geq 1}$
can be obtained by first picking a distribution $F$ from the prior law, then sampling the $X_i$ at random from $F$. 
Having picked $F$, if we restrict ourselves to the set of the sample paths $\omega=(x_1, x_2, ...)$ that we may obtain by sampling at random from it, 
we have the usual properties of the i.i.d. case; in particular, for  
almost all these $\omega$ 
the empirical distribution converges to $F$; thus $\tF(\omega)=F$, which is not random.  
However, when we observe a finite sample $(x_1, \ldots, x_n)$, we do not know what $F$ was
chosen, hence the limit of the empirical distribution may still be any distribution we could have picked from the prior. We would know which one 
if we could observe  the entire $\omega=(x_1, x_2, \ldots)$  
and thus see the limit of the empirical distribution, that is the realization $\tF(\cdot)(\omega)$ of the random $\tF$.

\subsection{Predictive characterization of exchangeability} \label{sect:exch-caratterization}

The representation theorem allows us to specify an exchangeable probability law through the usual inferential scheme. In a predictive approach, however, we would avoid models and priors and directly specify it through the predictive rule. This is the core of predictive modeling, beyond exchangeability.

For any probability law $\PP$ for the sequence $(X_n)_{n\geq 1}$, define the {\em predictive rule} 
as the sequence of predictive distributions  
$P_0(\cdot) \equiv \PP(X_1 \in \cdot)$ and, for $n \geq 1$,
\begin{equation} \label{eq:predPn}
P_n(\cdot) %=
%P_n(\cdot  \mid X_{1:n}) 
\equiv \PP(X_{n+1}\in \cdot \mid X_1,\dots,X_n),
\end{equation}
and let us denote by  $P_n(\cdot\mid x_{1:n})$ its realization for $x_{1:n}$. In particular, if  $\PP$ is exchangeable, the predictive rule 
is obtained as
$P_0(\cdot) %= E(\PP(X_1 \in \cdot \mid \tF))
= E(\tF(\cdot))$ and 
$P_n(\cdot)=E(\tF(\cdot)|X_1,\dots,X_n)$ for $n \geq 1$. 

In predictive modeling, one moves from the predictive rule to specify the probability law of the process $(X_n)_{n \geq 1}$. More formally,
one can  assign a sequence $(P_n)_{n\geq 0}$ of probability kernels
(or a {\em strategy}, \cite{dubinsSavage1965}, \cite{rigo2023}). Then by
 Ionescu-Tulcea theorem (see \cite{kallenberg2002} Theorem 5.17 and Corollary 5.18) 
there exists a unique probability law $\PP$ for a process $(X_n)_{n\geq 1}$ such that $X_1 \sim P_0$ and for every $n \geq 1$, $P_n$ is the conditional distribution of $X_{n+1}$ given $(X_1, \ldots, X_n)$.
(Given this equivalence, we will use the notation $(P_n)_{n\geq 0}$ %will be used 
to represent both the sequence of  the predictive distributions from 
 a given $\PP$, and a strategy).

Thus, the probability law $\PP$ of a process $(X_n)_{n \geq 1}$ is uniquely defined (characterized) by the sequence of predictive distributions $(P_n)_{n \geq 0}$.
A natural question is under what conditions on the $P_n$ one obtains an {\em exchangeable} law $\PP$. 
This problem has been addressed in \cite{fortini2000}.
\begin{theorem}[\cite{fortini2000}, Proposition 3.2 and Theorem 3.1]\label{th:exch-by-pred}
 Let  $(X_n)_{n\geq 1}\sim\PP$ be an infinite sequence of r.v.'s,  with predictive rule $(P_n)_{n \geq 0}$. 
  Then $(X_n)_{n\geq 1}$
 is exchangeable if and only if, for every $n\geq 0$, the following conditions hold:
\begin{itemize}
\item[{\rm i)}] For every 
$A$, $P_n(A\mid x_{1:n})$ is a symmetric function of $x_1,\dots,x_n$;
\item[{\rm ii)}] 
The set function 

$ (A,B) \rightarrow  \int_A P_{n+1}(B\mid x_{1:n+1})dP_n(x_{n+1}\mid x_{1:n})$ 

is symmetric in $A$ and $B$,
\end{itemize}
where $P_0(\cdot\mid x_{1:0})$ is meant as $P_0(\cdot)$. 
%\red{ora e' nelle Notations iniziali, no? cut da qui}
\end{theorem}

Condition i) requires that, for every $n\geq 1$, the predictive distribution of $X_{n+1}$ is a function of the empirical distribution of $(x_1,\dots,x_n)$; which is a necessary condition for exchangeability.  
As well,  given  $x_{1:n}$,
the predictive distribution of $(X_{n+1}, \ldots, X_{n+k})$ should be invariant under permutations  of the $k$ future observations, since under exchangeability the joint distribution of $(X_1, \ldots, X_{n+k})$ is symmetric. 
Condition {\rm ii)} only asks that the next $k=2$ observations can be permuted. 

\subsection{Prediction, frequency, models} \label{sec:prediction-frequency}

Although there are no formal constraints in assigning a predictive rule $(P_n)_{n \geq 0}$, we  aim for our predictions to be consistent with facts. 
For exchangeable sequences, the following property relates prediction to frequency.
\begin{proposition} \label{prop:predconv}
Let $(X_n)_{n \geq 1} \sim \PP\,$ 
be an exchangeable sequence, with predictive rule  $(P_n)_{n \geq 0}$.  
Then,  
with probability one, for $n \rightarrow \infty$ the sequence of predictive distributions converges, 
and its limit coincides with the limit of the empirical distributions: 
\begin{equation} \label{eq:predConvProp} 
P_n \rightarrow \tF ,   \quad  \mbox{$\PP$-a.s.},
\end{equation}
with $\tF$ as in \cref{th:representationTh-exch}.
\end{proposition}
A proof  
is given in  \cite{aldous1985}, Lemma 8.2 page 61. 
In fact, the result 
remains valid under the less restrictive condition that  $(P_n(A))_{n\geq 0}$ is a martingale for every $A$, without the need for $(X_n)_{n\geq 1}$ to be exchangeable \citep{berti2004}. 
We return on this point in more details in \cref{sec:asymptotic}.  

\begin{example}\label{example:multinomial}
Consider an exchangeable sequence $(X_n)_{n\geq 1}$ with $X_i \in \{1,\ldots, k\}$. 
Then the empirical distribution is characterized by the vector of relative frequencies $n_j/n$,
and any predictive distribution must be a function of $(n_1, \ldots, n_k)$, i.e.  
$p_n(j) \equiv \PP(X_{n+1}=j \mid x_{1:n})= \PP(X_{n+1}=j \mid n_1, \ldots, n_k), j=1, \ldots, k$. 
For any $j$, with probability one the relative frequency $n_j/n$
and the predictive probability $p_n(j)$ converge 
to the same random limit $\tilde{p}_j$.  
The statistical model is a discrete distribution on $\{1, \ldots, k\}$ with masses $(\tilde{p}_1, \ldots, \tilde{p}_k)$ and the prior is the probability law of the random limit $(\tilde{p}_1, \ldots, \tilde{p}_k)$. 
$\square$\end{example}

In Bayesian statistics, Proposition \ref{prop:predconv} ensures that, with probability one, our predictions will  adjust 
to frequencies; in other words, the predictive distribution $P_n$ and the empirical distribution $\hF$ will be close. Several refinements of this property are available, 
as well as quantitative bounds (\citep{Dolera2019}, and references therein; see also \cite{diaconisFreedman1990}). 

Proposition \ref{prop:predconv} also shows that, for exchangeable sequences, the statistical model is  the limit of the predictive distribution; which is also 
the limit of the empirical distribution. Hence,  the uncertainty on the model at a finite  $n$
is  uncertainty on their common limit.
It is this  uncertainty that is expressed in the posterior distribution of $\tF$, as we will illustrate in  \cref{sec:predictive-based appr}, expanding from \cite{fortini2020-newton}. This % 
is also the basic principle that underlines the interpretation of uncertainty in terms of \lq \lq missing observations" in \cite{fong2021}. 

\vspace{2mm}
{\em Remark}. 
de Finetti  proved the convergence property (\ref{eq:predConvProp})  
for exchangeable binary sequences $(X_n)_{n\geq 1}$, and it is interesting to note that he used this result to give an explanation in terms of prediction of  
the frequentist viewpoint on probability \citep{deFinetti1937}. He considers replicates of an experiment with binary outcome where a frequentist researcher assumes %it is natural to assume 
that $\PP(X_{n+k}=1 \mid x_{1:n})=p$ for any $k \geq 1$ and,  
for $n$ large, %a frequentist researcher 
estimates $p$ with the relative frequency $\hat{p}_n=\sum_{i=1}^n x_i /n$. 
Exchangeability makes the frequentist prediction, namely $\PP(X_{n+k}=1 \mid x_{1:n}) \simeq \hat{p}_n$, 
\lq \lq permissible", by the result (\ref{eq:predConvProp}).  
See \cite{cifarelli1996}, Sect 2.3. %p. 265).

\subsection{Asymptotic exchangeability.}\label{sec:asymptotic}

A natural question is whether there is a reverse implication of Proposition \ref{prop:predconv}. Exchangeability of $(X_n)_{n\geq 1}$ implies that $P_n \rightarrow \tF$, $\PP$--a.s., but convergence of $(P_n)_{n\geq 0}$ to a random probability measure does not imply exchangeability. However, it does so asymptotically. 
A sequence $(X_n)_{n\geq 1}$ is asymptotically exchangeable with limit directing random measure $\tF$ (shortly,  {\em $\tF$-asymptotically exchangeable})   
if, for $n \rightarrow \infty$  
$$ (X_{n+1}, X_{n+2}, \ldots ) \overset{d}{\rightarrow} (Z_1, Z_2, \ldots) , $$
where the sequence $(Z_n)_{n\geq 1}$ is exchangeable and has directing random measure $\tF$. 
It can be proved that, if the sequence of predictive distributions $(P_n)_{n\geq 0}$ of $(X_n)_{n\geq 1}$ converges to a random probability measure $\tF$, then $(X_n)_{n\geq 1}$ is $\tF$-asymptotically exchangeable (\cite{aldous1985}, Lemma 8.2). 
Roughly speaking, for $n$ large
$$X_n \mid \tF \overset{iid}{\approx} \tF,
$$
where $\tF$ has a prior law induced by the predictive rule. 

Interestingly, convergence of the sequence $(P_n)_{n \geq 0}$ to a random probability measure, thus asymptotic exchangeability, holds if $(P_n)_{n \geq 0}$ is a martingale, or, equivalently, if 
the sequence $(X_n)_{n \geq 1}$ is  {\em conditionally identically distributed} (c.i.d.); that is, if it satisfies
\begin{equation} \label{eq:mgspreadability}
(X_1, \ldots, X_n, X_{n+1}) \overset{d}{=} (X_1, \ldots, X_n, X_{n+k}), 
\end{equation}
for all integers $k \geq 1$ and $n \geq 1$; i.e., 
conditionally on the past, all future observations are identically distributed. The property (\ref{eq:mgspreadability}) was considered by Kallenberg as a weak invariance condition that, for stationary sequences, is equivalent to exchangeability (\cite{kallenberg1988}, Proposition 2.1). 
He also noted that (\ref{eq:mgspreadability}) is equivalent to  $(X_1, \ldots, X_n, X_{n+1}) \overset{d}{=} (X_1, \ldots, X_n, X_{n+2})$ for all $ n \geq 1$.
The term c.i.d. was introduced by Berti {\em et al.} \cite{berti2004}, who proved, among other results, %things, 
that the c.i.d. property is equivalent to $(P_n)_{n \geq 0}$ being a measure-valued martingale with respect to the natural filtration
of $(X_n)_{n \geq 1}$. The martingale property means that the sequence of random measures $(P_n)_{n\geq 0}$ satisfies
\begin{align*}
E(P_{n+1}(A)\mid X_1,\dots,X_n)=P_n(A)
\end{align*}
for every $n\geq 0$ and every measurable set $A$.  See \cite{horowitz1985}. 

For exchangeable sequences, it is straightforward to show that the predictive rule is a martingale. But the martingale condition is weaker than exchangeability; still, 
remarkably, it is sufficient to prove the convergence result in Proposition \ref{prop:predconv}: for a c.i.d. process $(X_n)_{n \geq 1}$, the sequence of the empirical distributions converges $\PP$-a.s. to a random probability distribution $\tF$, and the sequence of predictive distributions not only converges (being a bounded martingale), but converges to the same limit $\tF$ (\cite{berti2004}, Theorem 2.5).

Thus, a c.i.d. sequence is asymptotically exchangeable. 
However, it is not generally exchangeable. The property that is broken is stationarity: the researcher is assuming a temporal evolution in the process. It is however a specific form of evolution: marginally, the r.v.'s 
are identically distributed, and the process converges to a stationary - thus, together with the c.i.d. property, exchangeable - state (see also \cite{fortini2018} and \cite{fortini2020-newton}). For more developments, we refer to \cite{rigo2023}.  We 
return to asymptotic exchangeability and c.i.d. sequences in Section \ref{sec:algorithms}.

\subsection{Predictive-based approximations of the posterior distribution.}\label{sec:predictive-based appr}
In the usual inferential setting, one computes the posterior distribution and obtains the
predictive distribution as in expression 
(\ref{eq:pred}).  In predictive modeling, the order is reversed; here,  from the predictive assumption of   exchangeability of the $X_i$, we have obtained the implied inferential scheme.
Can we also revert the order in expression (\ref{eq:pred}), i.e. go from the predictive rule to the posterior distribution, and what would be the implications on inference? 
In this section we address this question and show two implications; namely, two predictive-based approximations of the posterior distribution. 

For brevity, here we consider $X_i \in \mathbb{R}$. 
In the exchangeable setting, with no parametric restrictions, we have  
$X_i \mid \tF \iidsim \tilde F$ and we are used to think of 
the prior and posterior distributions on $\tF$ as expressing uncertainty on the true distribution, say $F_0$. 
In fact, as briefly anticipated in \cref{sec:prediction-frequency}, what the prior and the posterior distributions are expressing is the uncertainty about the common limit $\tF$ of the empirical and the predictive distributions. If we knew the entire sample path $\omega=(x_1, x_2, \ldots)$, we would know the limit, namely $\tF(\cdot)(\omega)$,  
and there would be no uncertainty left.
Given a finite sample $(x_1,\dots,x_n)$, we are still uncertain about the limit, and this uncertainty is expressed through the posterior distribution. 
The following approximations of the posterior distribution are based on this principle. 

\medskip
\noindent {\em A predictive-based simulation scheme.} 

First, leveraging on 
Proposition \ref{prop:predconv},  we can provide 
a predictive-based sampling scheme (\cite{fortini2020-newton}, \cite{fortini2023}) to approximate the prior and the posterior distribution of $\tF$; in practice we use $[\tilde F(t_1),...,\tilde F(t_k)]$  for $t_1,...,t_k$ in a grid of values.
Assume that $P_0(\cdot)=E(\tF(\cdot))$ is continuous in $t_1, \ldots, t_k$, which implies that, $\PP$-a.s.,  $\tF$ is  continuous at those points so that $\lim_n \hat{F_n}(t_j) = \lim_n P_n(t_j) = \tF(t_j)$ for any $t_j$, $\PP$-a.s.

In principle, given the predictive rule, one can generate $\omega=(x_1, x_2, \ldots)$ by sampling $x_1$ from $P_0$, then $x_2$ from 
$P_1(\cdot \mid x_1)$ 
and so on; and,  having $\omega=(x_1, x_2, \ldots)$, can obtain $\tF(t_j)(\omega)$ for $j=1, \ldots, k$;  which is 
a sample  from the prior law of $[\tF(t_1), \ldots, \tF(t_k)]$. Repeating $M$ times gives a Monte Carlo sample of size $M$ from the prior.

To simulate from the posterior law given $(x_1,\dots,x_n)$, one can proceed similarly by generating the missing observations $(x_{n+1}, x_{n+2}, \ldots)$ from the predictive rule to complete $\omega$, and repeat $M$ times to obtain a sample of size $M$ from the posterior. Of course, in practice, one would truncate $\omega$ to a finite sequence 
$(x_1, \ldots, x_N)$ 
with $N$ large, and approximate $\tF(t)(\omega)$ with $P_N(t\mid x_{1:N})$, 
or with $\hat{F}_N(t)$, for each $t$ in the grid. 

A similar  
predictive principle is considered in the 
interesting developments by Fong, Holmes and Walker \citep{fong2021} of the Bayesian bootstrap in a parametric setting: samples from a martingale posterior distribution are obtained by Doob theorem \citep{doob1949},  after simulating  future observations from a sequence of predictive distributions (see also \cite{holmesWalker2023}). 

\medskip
\noindent {\em A predictive-based asymptotic approximation of the posterior distribution}. 

One can also obtain a predictive-based analytic approximation of the posterior distribution for large $n$. By Proposition \ref{prop:predconv}, $|\tF(t)- P_n(t)| \rightarrow 0$, $\PP$-a.s., 
for any continuity point $t$ of $\tF$, 
and because $(P_n(t))_{n \geq 0}$ is a martingale, one could use
martingale central limit theorems
to give asymptotic approximations of $(\tF(t)-P_n(t))$; yet, this  %\st{but} 
would not inform on its behavior {\em conditionally} on the data. 
The following result uses a central limit theorem for martingales in terms of {\em a.s. convergence of conditional distributions} \citep{crimaldi2009}. This type of convergence has been applied in probability for other aims
and was used in a novel way in Bayesian statistics by \cite{fortini2020-newton} to inform on the asymptotic 
form of the {\em posterior} distribution.  
Here, we provide an asymptotic Gaussian approximation of the joint posterior distribution of $[\tF(t_1), \ldots, \tF(t_k)]$, extending a result in \cite{fortini2023}. Because $E(\tF(t) \mid X_1,\dots X_n)
=P_n(t)$, %\st{then} 
the approximation is centered on $P_n$. 
Then we look at {\em how} the predictive rule learns from the data, introducing
the notion of {\em predictive updates}.
As a fresh observation becomes available, the predictive distribution is updated by  incorporating the latest information, and for $n \geq 1$ and $t \in \mathbb R$ we denote by 
$$\Delta_{t,n}=P_n(t)-P_{n-1}(t)$$ 
the $n$-th update of the predictive distribution function at the point $t$ as $X_n$ becomes available.
For a given $t$, the predictive updates $\Delta_{t,n}$ eventually converge to zero, since $P_n \rightarrow \tF$, and the rate of convergence is generally of the order $1/n$, as discussed in \cite{fortini2023}.  The following proposition shows that the convergence of $ \sqrt{n}(\tilde{F}(t) - P_n(t)) $ depends on the asymptotic behaviour of $ (n\Delta_{t,n})_{n\geq 1} $.
For a grid of points  ${\bf t}=(t_1,\dots,t_k) \in 
\mathbb R^k$,   we denote by  
${\bf \Delta}_{{\bf t},n}=[\Delta_{t_1,n}\;\dots \Delta_{t_k,n}]^T$ the column vector of the updates of $(P_n(t_1),\dots,P_n(t_k))$.
The proposition below holds for exchangeable sequences, but more generally for sequences whose predictive rule is a martingale, i.e. for c.i.d. sequences.
\begin{proposition} \label{prop:credible}
Let $(X_n)_{n \geq 1} \sim \PP$ 
be a c.i.d. 
sequence of real-valued r.v.'s, with predictive rule $(P_n)_{n \geq 0}$, and take  ${\bf t}=(t_1,\dots,t_k)$ such that $\PP(X_1\in \{t_1,\dots,t_k\})=0$. 
Suppose that the predictive updates satisfy
\begin{align*}
    &E(\sup_n \sqrt n |\Delta_{t_i,n}|)<+\infty& (i=1,\dots,k),\\
    &\sum_{n=1}^\infty n^2 E(\Delta_{t_i,n}^4)<+\infty& (i=1,\dots,k),\\
    &E(n^2 {\bf \Delta}_{{\bf t},n}{\bf \Delta}_{{\bf t},n}^T \mid X_1,\dots,X_{n-1}) % x_{1:n-1})
    \rightarrow {U}_{\bf t} & \PP\mbox{-a.s.} , 
    \end{align*}
for a positive definite random matrix  ${U}_{\bf t}$.
Define, for every $n\geq 1$,
 \begin{equation} \label{eq:Vn}
       V_{n, {\bf t} }%t_1, \ldots, t_k}
    = \frac 1 n \sum_{m=1}^nm^2 {\bf \Delta}_{{\bf t},m}{\bf \Delta}_{{\bf t},m}^T.
\end{equation}
 Then,   $\PP$-a.s.,  
$V_{n, {\bf t} }$ converges to $U_{\bf t}$ and
     \begin{eqnarray*}
& \sqrt n \;V_{n,{\bf t}}^{-1/2}\left[\begin{array}{c}\tilde F(t_1)-P_n(t_1)\\ \dots \\\tilde F(t_k)-P_n(t_k)\end{array}\right] \mid X_1,\dots,X_n %x_{1:n} 
& \overset{d}{\rightarrow} \Norm_k(0, I) 
\end{eqnarray*}
as $n \rightarrow \infty$, where $\Norm_k(0,I)$ denotes the  $k$-dimensional standard Gaussian distribution. 
\end{proposition}
Informally, for $n$ large, 
$$ \left[\begin{array}{c} \tF(t_1)\\ 
\vdots \\
\tF(t_k)) \end{array}\right]
\mid x_{1:n}  \approx \Norm_k \left( 
\left[\begin{array}{c} P_n(t_1)\\ \vdots \\P_n(t_k)) \end{array}\right],
\frac{V_{n, {\bf t}}}{n}\right)  
$$
for $\PP$-almost all sample paths $\omega=(x_1, x_2, \ldots)$.\\
Proposition \ref{prop:credible}  allows to compute asymptotic credible sets. For example,  a $(1-\alpha)$ marginal asymptotic credible interval for $\tF(t)$ given $x_{1:n}$ is 
$$
\left[ P_n(t ) - z_{1-\alpha/2} \sqrt{\frac{V_{n,t}}{n}}, P_n(t ) + z_{1-\alpha/2} \sqrt{\frac{V_{n,t}}{n}} \right]
$$
with $z_{1-\alpha/2}$ denoting the $1-\alpha/2$ quantile of the standard normal distribution and $V_{n,t}=\frac 1 n \sum_{m=1}^n m^2\Delta_{t,m}^2$.

The proof of Proposition \ref{prop:credible} is given in Section A2 %\cref{sec:proof2} 
of the Supplement \citep{supplement}, and consists of two steps. First we prove (Proposition A2.1 %\ref{prop:pred}
) that, under the given conditions on the predictive updates, 
\begin{equation} \label{eq:prova2}
 \sqrt n \left[\begin{array}{c}\tilde F(t_1)-P_n(t_1) 
 \\ \dots \\\tilde F(t_k)-P_n(t_k)
 \end{array}\right] \mid x_{1:n} 
 \overset{d}{\rightarrow} \Norm_k(0,  U_{\bf t}(\omega)) 
\end{equation}
for $\PP$-almost all $\omega=(x_1,x_2,\dots)$.  
Then we prove that the asymptotic result remains valid if the matrix $ U_{\bf t}$, that depends on the whole sequence $(X_1,X_2,\dots)$,  is replaced by 
its \lq\lq estimate'' $V_{n,{\bf t}}$, that only depends on $(X_1,\dots,X_n)$.

Proposition \ref{prop:credible} gives  sufficient conditions that could possibly be relaxed; also, other choices of $V_{n,t}$ can be envisaged. 
Note that the result  is given under the  law $\PP$, thus, although having a similar flavor, it differs from Bernstein-von Mises asymptotic Gaussian approximations, which are stated with respect to a law $P_{F_0}^\infty$ that assumes that the $X_i$ are i.i.d. from a true distribution $F_0$. Moreover, here the asymptotic variance is expressed in terms of the predictive updates. 

As \lq \lq $\PP$-probability one" results, our findings %our result 
may rather be regarded as a refinement of Doob's theorem for inverse probabilities in the nonparametric case; see point (ii) in Section 4 of Doob \cite{doob1949} 
(for us limited to the finite-dimensional distributions). 
For an exchangeable law $\mathbb P$, %by Doob's theorem 
Doob's theorem ensures that, with $\PP$-probability one i.e. for $\PP$-almost all $\omega=(x_1, x_2, \ldots)$, the posterior expectation $E(\tF(\cdot)\mid x_{1:n})$ converges to
$F=\tF(\cdot)(\omega)$ and
the posterior variance goes to zero, so that the posterior distribution of $\tF$ concentrates around $F$. Proposition \ref{prop:credible} further describes how the posterior distribution of $\tilde F$ concentrates around its conditional expectation:   the asymptotic distribution is Gaussian, and in particular, the rescaled asymptotic variance depends on  how the predictive distribution varies in response to new data, namely on the predictive updates.   

{\em Discussion.} 
Although 
ours are \lq \lq probability one results", they give insights on frequentist properties of the posterior distribution, from a novel perspective explicitly related to the behavior of the predictive learning rule.  
Roughly speaking, our results suggest that frequentist consistency at $F_0$, and frequentist coverage,  can be read as a problem of \lq \lq efficiency" of the predictive distribution: if the data are generated as i.i.d. from $F_0$, the predictive distribution, that is, the adopted learning rule, should be able to  \lq efficiently' learn that. 
As discussed in \cite{fortini2023}, the predictive updates should balance the convergence rate with 
a proper \lq \lq learning rate'': if $P_n$ converges quickly, with predictive updates that quickly decrease to zero, at step $n$ we would be rather sure about the limit $\tF$ of $P_n$, reflected in small uncertainty (small variance $V_{n,t}$) %$U$) 
in the posterior distribution of $\tF$ and narrow credible intervals.  On the other hand, 
very 
small predictive updates could reflect poor learning; the extreme case being a degenerate predictive distribution $P_n=P_0$ for any $n$, that converges immediately but does not learn from the data.  An open problem we see is thus to explore conditions under which the predictive rule efficiently balances convergence and learning properties and provides asymptotic credible intervals for $\tF(t)$ with good frequentist coverage.

\section{Methods for predictive constructions} \label{sec:methodsConstruction}
 The reader may be fairly convinced that predictive modeling is conceptually sound;  but may still be concerned that is it difficult to apply in practice. An interpretable statistical model, when possible, incorporates valuable information, and sounds more natural. Moreover, while there is wide literature on prior elicitation, methodological guidance on \lq \lq predictive elicitation" is quite fragmented. 
 The aim of this section is to trace 
 some available methodology, and provide a few examples.
The methods include the notion of predictive sufficiency, that reconciles  predictive modeling to parametric models; and the different notion of {\em sufficientness}, that generally leads to nonparametric constructions - a point that seems overlooked; 
and predictive constructions based on stochastic processes with reinforcement. 
Most of the examples we provide come from Bayesian nonparametric statistics and machine learning, where the predictive approach allows to overcome difficulties in assigning a prior law on infinite-dimensional random objects and has indeed been the basis of vigorous theoretical and applied developments. 

\subsection{Constraints on the form of point predictions} \label{sec:constraints}
Basically all predictive constructions make some assessment on the form of the predictive distribution.  If a parametric model has been already chosen, it may be enough to restrict the class of point predictions $E(X_{n+1} \mid x_{1:n})$. Diaconis and Ylvisaker's \citep{diaconisYlvisaker1979} characterization of conjugate priors for models in the natural exponential family (NEF) is possibly the most classic example. 

\begin{example} ({\em Conjugate priors for the NEF}.)\label{ex:diaconisYlvisaker} 
Let 
 $X_i \mid \theta \iidsim p(x \mid \theta) = e^{x^T\theta- M(\theta)},$ 
where $p(\cdot \mid\theta)$ is a probability density function on $\mathbb R^k$ with respect to a dominating measure $\lambda$ whose support contains an open interval of $\mathbb R^k$, and  $M(\theta)=\ln \int e^{x^T\theta}d\lambda(x)$, for  $\theta\in \Theta=\{s\in \mathbb R^k:M(s)<\infty\}$. 
Because the model is given, the predictive rule characterizes the prior distribution $\pi$ of $\ttheta$, which is assumed to be non degenerate. Let $\tilde\mu=E(X_1\mid\ttheta)$ denote the mean vector parameter, which is also the point prediction: $E(X_2\mid X_1)=E(\tilde\mu\mid X_1)$. 
Diaconis and Ylvisaker (\cite{diaconisYlvisaker1979}, Theorem 3) prove that if $E(\tilde\mu\mid X_1)=aX_1+b$ with $a\in\mathbb R$ and $b\in \mathbb R^k$, 
then $a\neq 0$ and the prior density on the natural parameter $\tilde \theta$ is the conjugate prior 
$\pi(\theta)=c \exp(a^{-1}b^T\theta  -a^{-1}(1-a)M(\theta)).
$
This result does not apply to discrete distributions in the NEF, since the support of the dominating measure does not include an interval of $\mathbb R^k$. For discrete {\em univariate} distributions they give an analogous characterization under the assumption $\Theta=(-\infty,\theta_0)$ with $\theta_0<\infty$. The characterization for the Poisson distributions was already known. 
$\square$\end{example}

\begin{example}\label{example:urnBinary} ({\em Conjugate prior for binary data}).  
Let  $X_i\mid\tilde p=p \iidsim \mbox{Bernoulli}(p)$. Diaconis and Ylvisaker \citep{diaconisYlvisaker1979} prove that, if $ E(\tilde p\mid X_1,\dots,X_n)$ - that coincides with $E(X_{n+1}\mid X_1,\dots X_n)$ - is linear in $\overline X_n$ for every $n$, 
then the prior 
on $\tilde p$ is the conjugate Beta distribution.
The result extends to the characterization of the Dirichlet as the unique family of distributions allowing linear posterior expectation for multinomial observations. 
$\square$ \end{example}

\subsection{Predictive sufficient statistics and parametric models} \label{subsec:sufficiency}
A natural 
tool for predictive 
 elicitation is predictive sufficiency. 
For exchangeable sequences $(X_n)_{n\geq 1}$, the predictive distribution $P_n$ is a function of the entire empirical distribution $\hF$. In other words, the empirical distribution is a sufficient summary of $(X_1, \ldots, X_n)$ 
for prediction of future observations, which is an immediate consequence of exchangeability. In many applications, it is natural to think that a summary $T_n=T(\hF)$ of $\hF$ is sufficient, i.e. that the predictive distribution is a function of $T_n$. The statistic $T_n$ is said to be sufficient for prediction or {\em predictive sufficient.}

Predictive sufficiency has been investigated by several authors from the 1980's;  see the book by Bernardo and Smith (\cite{bernardoSmith1994}, Sect 4.5) and Fortini {\em et al.} \cite{fortini2000}, which includes extensive references. 
Related notions of sufficiency have been studied by Lauritzen (\citep{lauritzen1984}, \cite{lauritzen1988}) and Diaconis and Freedman \cite{diaconisFreedman1984-sufficiency};  %These works are reviewed in Section 2.4 of 
Schervish (\cite{schervish1995-book}, Sect 2.4) gives a review. Several results, and relations with classical and Bayesian sufficiency,  are given in   \cite{fortini2000}. 

The assumption of a predictive sufficient statistic is strictly connected with the assumption of a parametric model.
Informally, if the predictive distribution depends on the data through a predictive sufficient statistic $T_n=T(\hF)$ - 
 expressed, with an abuse of notation,  as  $
 \PP(X_{n+1}\in\cdot \mid X_1,\dots,X_n)=P_n(\cdot \mid T(\hF))
$ - 
then we can expect that, under conditions on 
$T$,   
$$T_n\equiv T(\hF) \rightarrow T(\tF) \equiv \tilde{\theta},
$$ 
(because $\hF \rightarrow \tF$); and, under conditions on $P_n$ as a function of $T_n$, 
$$P_n(\cdot \mid T_n) \rightarrow F(\cdot\mid  \tilde{\theta}) 
$$
for a function $F$. That is, the statistical model (which is the limit of $P_n$)
has a parametric form $F(\cdot\mid \tilde{\theta})$ where the parameter $\tilde{\theta}=T(\tF)$ is the limit of the predictive sufficient statistic. 
This is the content of next theorem. A more general result, but technically more involved, is in \cite{fortini2000}, Theorem 7.1.

\begin{theorem}\label{th:sufficient}
    Let $(X_n)_{n\geq 1} \sim \PP$ be an exchangeable sequence with directing random measure $\tF$. 
Assume that there is a predictive sufficient statistic $T_n=T(\hF)$, 
where 
     $T: {\cal M} \rightarrow \mathbb T\in\mathcal B(\mathbb{R}^k)$ 
     is a continuous function defined on a measurable set $\cal M$ of probability measures such that $\mathbb P(\tilde F\in \mathcal M)=1$. For every $n\geq 1$, let 
     $q_n(\cdot, t)=\PP(X_{n+1} \in \cdot \mid T(\hat{F}_n)=t)$, $t \in \mathbb T$. \\
        If, for every $A$ with $P_0(\partial A)=0$, the functions $(q_n(A, \cdot))_{n\geq 0}$ are continuous on $\mathbb T$, uniformly in $t$ and $n$,       
               then there exists a function $F$ such that 
    $\tF(\cdot)=F(\cdot\mid  \tilde{\theta})$, where $\tilde{\theta}=T(\tilde F)$ is the $\PP$-a.s. limit of $T_n$. 
\end{theorem} 
 The continuity assumptions in the theorem seem reasonable as a \lq robustness' requirement expressing the idea that small changes in the value of the predictive sufficient statistic $T_n$ do not lead to abrupt changes in the prediction. 
The proof is in Section A3 %\cref{sec:proof3} 
 of the Supplement \citep{supplement}.

\begin{example} 
Consider a Gaussian model
$X_i \mid \mu  \iidsim \Norm(\mu, \sigma^2)$, with 
$\tilde \mu \sim \Norm(0,1)$ and known variance $\sigma^2$, for simplicity equal to one. 
Take $\mathcal M$ as the set of probability measures with finite first moment, $\mathbb T=\mathbb R$ and $T(m)=\int x \,dm(x)$, for $m\in\mathcal M$. 
The conditions of \cref{th:sufficient} hold. First,
$
E(\int |x|\tilde F(dx))=\int |x| dP_0(x)<+\infty,
$
which implies that $\tilde F\in\mathcal M$, $\PP$-a.s. The function $T$ is continuous on $\mathcal M$
and, for every $A$, the evaluation on $A$ of $q_n(\cdot, t)=\Norm(n/(n+1) t, (2+n)/(1+n))$ 
is continuous in $t$, uniformly with respect to $t$ and $n$. 
 $\square$\end{example}

Theorem \ref{th:sufficient} gives sufficient conditions under which the statistical model is parametric. 
Stronger conditions are needed if we want to obtain a dominated model. 
\begin{proposition}
Under the assumptions of \cref{th:sufficient}, and 
\begin{itemize}
\item[{\rm i)}]  the predictive distributions $P_n$ are absolutely continuous w.r.t. a dominating measure $\lambda$,
\item[{\rm ii)}] with probability one, the sequence $(P_n)_{n \geq 0}$ converges to the directing random measure $\tilde F$ in  total variation, 
\end{itemize}
then the statistical model is dominated, i.e. $\PP$-a.s., $\tilde F(\cdot)=F(\cdot\mid \ttheta)$,  
with  $F(\cdot\mid\theta) $  absolutely continuous with respect to  
$\lambda$ for every $\theta$.
\end{proposition}
The proof follows from Theorem 1 in  \cite{BertiPratelliRigo2013}, which shows that the  conditions i) and {\rm ii)} are necessary and sufficient for the random directing measure $\tF$ to be absolutely continuous w.r.t. $\lambda$. By Theorem \ref{th:sufficient}, $\tF$ has parametric form $F(\cdot \mid \ttheta)$, and because the limit of $P_n$ is unique almost everywhere, we have the conclusion. 

\subsection{Predictive \lq \lq sufficientness'' } \label{sec:sufficientness}
A different concept is predictive \lq \lq sufficientness'' \citep{zabell2005}. 
The term \lq sufficientness' was used by Good \citep{good1967} with reference to the work by W. E. Johnson \citep{johnson1932}. Zabell \citep{zabell1982} notes that Good \citep{good1965-book} initially used \lq sufficiency' but switched to \lq sufficientness' to avoid confusion with the usual notion of sufficiency. Here, there is no predictive sufficient statistic beyond the empirical distribution; however, for every set $A$, the probability that a future observation takes value in $A$ is assumed to depend only on $\hat F_n(A)$. 
In principle, only sufficientness assumptions of the kind above 
are made;  it is however assumed that $(X_n)_{n\geq 1}$ is exchangeable, which introduces constraints on the permissible analytic form of $P_n$ and may identify it.

Interestingly, since the entire empirical distribution is needed for prediction of future observations, we expect that,  if no further restrictions 
are made beyond exchangeability and sufficientness, this type of predictive constructions leads to a {\em nonparametric} model.

\begin{example} ({\em Sufficientness  characterization of the Dirichlet conjugate prior for categorical data}). 
Consider an exchangeable sequence $(X_n)_{n\geq 1}$ of categorical r.v.'s with values in
$\{1, \ldots, k\}$ with $k>2$, finite. With the notation as in \cref{example:multinomial}, 
\begin{equation}\label{eq:discreteNP} X_i \mid (\tilde{p}_1, \dots, \tilde{p}_k) \iidsim   
\left\{
\begin{array}{l}
1 , \ldots ,k\\
\tilde{p}_1, \dots, \tilde{p}_k.
\end{array}
\right.
\end{equation}
No parametric form is imposed on the masses $(\tilde p_1, \ldots, \tilde{p_k})$; in this sense, this is a \lq \lq nonparametric" setting.
Since the sequence $(X_n)_{n\geq 1}$ is exchangeable, the predictive distribution depends on the empirical frequencies $(n_1, \ldots, n_k)$, i.e. $\PP(X_{n+1}=j \mid x_{1:n})=\PP(X_{n+1}=j \mid n_1, \ldots, n_k)$. 
The sufficientness postulate 
states that the predictive probability of  $X_{n+1}=j$ only depends on $n_j$, 
\begin{equation} \label{eq:zabell-sufficientness}
    \PP(X_{n+1}=j \mid x_{1:n})= f_{{ n,j}}(n_j) , \quad j=1, \ldots, k. 
\end{equation}
We stress that, to provide the predictive probabilities for {\em all}  $j$, the entire vector of empirical frequencies is  needed. 

Formally developing an argument by \citep{johnson1932}, Zabell \citep{zabell1982} proves that the sufficientness assumption (\ref{eq:zabell-sufficientness}),   
together with $\PP(X_1=x_1,\dots,X_n=x_n)>0$ for every $(x_1,\dots,x_n)$, implies that $f_{{n,j}}(n_j)$ is linear in $n_j$, and more specifically, that, if the $X_i$ are not  independent,  there exist positive constants $(\alpha_1, \ldots, \alpha_k)$  
such that 
\begin{equation} \label{eq:zabell-dirichlet}
\PP(X_{n+1}=j \mid n_j)=\frac{\alpha_j+n_j}{\alpha+n}, 
\end{equation}
where $\alpha=\sum_{i=1}^k \alpha_i$. In turn, this  allows to obtain the expression of all the moments of the prior distribution, which are shown to characterize the Dirichlet($\alpha_1, \ldots, \alpha_k$) distribution as the prior for  $(\tilde{p}_1, \ldots, \tilde{p}_n)$.
\cite{zabell1982} also includes results for finite exchangeable sequences.
$\square$\end{example}

Johnson's sufficientness postulate can be extended to the case of r.v.'s taking values in a general Polish space $\mathbb  X$. 
Consider $(X_n)_{n\geq 1}$ exchangeable and assume that  for any $n \geq 1$, the predictive rule states that for any measurable set $A$ % such that $P_0(A)>0$,
\begin{equation} \label{eq:suff-DP}
P_n(A)= \PP(X_{n+1} \in A \mid \hat F_n(A)).
    \end{equation}
Since $(X_n)_{n\geq 1}$ is exchangeable, there exists $\tilde F$ such that $X_i \mid \tF \iidsim \tF$. Again, the entire empirical distribution is needed to obtain the predictive distribution, thus we expect to characterize a nonparametric prior on the random distribution $\tF$. 
This is indeed the case.
  \begin{proposition}\label{prop:suffDP}
Let $(X_n)_{n \geq 1}$ 
be an exchangeable sequence and assume that $X_1 \sim P_0$ and, for any $n \geq 1$, the predictive distribution satisfies (\ref{eq:suff-DP}).   
If the $X_i$ are not independent, then the directing random measure $\tF$ 
has a Dirichlet process distribution with parameters $(\alpha , P_0)$ for some $\alpha >0$, denoted $\tF \sim$ DP$(\alpha , P_0)$.
 \end{proposition} 
The proof of Proposition \ref{prop:suffDP} is in Section A3 % \cref{sec:proof3} 
of the Supplement \citep{supplement}. 
This result seems new. Doksum (\cite{doksum1974}, Corollary 2.1) proves that the Dirichlet process is the only \lq non trivial' process such that the posterior distribution of $\tF(A)$ given $x_{1:n}$ 
only depends on the number $n_A$ of observations in $A$ (and not on where they fall within or outside $A$). This implies that the predictive distribution of $X_{n+1}$ given $x_{1:n}$ %(X_1, \ldots, X_n)$
only depends on $n_A$; but the latter is a weaker condition. The proposition above shows that it still implies that $\tF$ is a Dirichlet process.
Other characterizations of the Dirichlet process through sufficientness use the additional  assumption that the predictive distribution has a specific linear form as e.g. in \cite{lo1991} or, equivalently, assume that the $X_i$ are categorical random variables. 
Actually, the  sufficientness postulate (\ref{eq:suff-DP}) is reasonable 
only for categorical random variables 
(for continuous data, for example, one would not fully exploit the information in the sample). 

\medskip
A number of nonparametric priors are characterized by forms of predictive sufficientness. 
Zabell \cite{zabell1997} characterizes the 
two parameter Dirichlet process %\st{for $\tF$} 
from sufficientness assumptions (see \cref{ex:twoparameters} in \cref{sec:examples}).  
Extensions to the class of Gibbs-type priors \cite{deBlasi-gibbs2015} and to  hierarchical generalizations are given by \cite{bacallado2017}.
Muliere and Walker \cite{walkerMuliere1999-NTR} give a predictive characterization of Neutral to the Right processes \citep{doksum1974}  based on an extension of Johnson's sufficientness postulate. 
Sariev and Savov \cite{sariev2023-sufficientness} provide a sufficientness characterization of exchangeable measure-valued P\'olya urn sequences. 

\subsection{Stochastic processes with reinforcement}
Stochastic processes with reinforcement, originated from an idea by Diaconis and Coppersmith \cite{diaconisCoppersmith1986}, are perhaps the main tool used in Bayesian statistics for predictive constructions. They express the idea that, if an event occurs along time, the probability that it occurs again in the next time increases (is {\em reinforced}). They are of interest in probability and in many areas beyond Bayesian statistics; applications include population dynamics, network modeling (where they are often referred to as preferential attachment rules), learning and evolutionary game theory, self-organization in statistical physics and many more. A beautiful review is given by Pemantle \cite{pemantle2007}.

Urn schemes 
are basic building blocks for random processes with reinforcement.  

\begin{example} ({\em Two-color \Polya urn})
The simplest example is the two color \Polya urn (\cite{Epolya1923}, \cite{polya1931}). 
One starts with an urn that contains $\alpha$ balls, of which $\alpha_1$ are white and the others are black. At each step, a ball is picked at random and returned in the urn along with an additional ball of the same color. Denoting by $X_n$ the indicator of a white additional ball at step $n$, and by $Z_n$, $n \geq 0$, the proportion of white balls in the urn %after the $n$th draw, 
before the $(n+1)$th draw, 
we have $\PP(X_1=1)=\alpha_1/\alpha=Z_0$ and for any $n \geq 1$
$$  \PP(X_{n+1}=1 \mid X_1,\dots,X_n)= \frac{\alpha_1 + \sum_{i=1}^n X_i}{\alpha + n}=Z_n.  $$
%the proportion $Z_n$ of white balls in the urn at step $n$. 
The two color \Polya urn was proposed as a model for the evolution of contagion. In Bayesian statistics, \Polya sampling is not meant as describing a process that actually evolves over time (such as the spread of contagion), but describes the evolution of information; namely a learning process where the probability that the next observation is white %one  
is {\em reinforced} as more white balls are observed %we observe more ones 
in the sample. It is well known that the sequence $(X_n)_{n\geq 1}$ so generated is exchangeable, and that both the relative frequency $\sum_{i=1}^n X_i/n$ and the proportion of white balls $Z_n$ converge to a random limit $\ttheta \sim \mbox{Beta}(\alpha_1, \alpha-\alpha_1)$. Thus, from the predictive rule we get $X_i \mid \ttheta \iidsim \mbox{Bernoulli}(\ttheta)$ with a conjugate Beta$(\alpha_1, \alpha-\alpha_1)$ prior.
$\square$\end{example}

The celebrated extension to a countable number of colors are \Polya sequences \citep{blackwell1973}, see the following \cref{ex:polyasequence}. Many more exchangeable predictive constructions are based on reinforced stochastic processes; we provide a few notable examples in the next section.  

\subsection{Examples in Bayesian nonparametrics} \label{sec:examples}

\begin{example}{({\em The Dirichlet process})} \label{ex:polyasequence}
In \cref{sec:sufficientness}, we have seen a characterization of the Dirichlet process in terms of sufficientness. The predictive characterization as an extension of \Polya sampling was given by Blackwell and MacQueen \cite{blackwell1973}. For data in a Polish space $\mathbb X$, Blackwell and MacQueen define {\em \Polya sequences} $(X_n)_{n \geq 1}$ as characterized  
by the predictive rule $X_1 \sim P_0$ and for any $n \geq 1$, 
\begin{equation} \label{eq:predDP}
    X_{n+1} \mid X_1,\dots,X_n \sim P_n=\frac{\alpha}{\alpha+n} P_0  + \frac{n}{\alpha+n} \hat{F}_n,
\end{equation}
where $\alpha > 0$. They prove %\citep{blackwell1973} 
that a  \Polya sequence is exchangeable and $P_n$ converges $\PP$-a.s. to a {\em discrete} random distribution $\tF$; moreover,   
 $\tF \sim$ DP$(\alpha, P_0)$. 
 It follows that $X_i \mid \tF \iidsim \tF$, with a DP$(\alpha, P_0)$ prior on $\tF$. 

\Polya sequences can be also described as reinforced urn processes; the interest in such characterization is that it enlightens the link with the theory of random partitions. Indeed, the discrete nature of the Dirichlet process, that follows from (\ref{eq:predDP}), implies that ties are observed in a random sample $(X_1, \ldots, X_n)$ with positive probability. 
This induces a random partition of $\{1, \ldots, n\}$, with $i$ and $j$ in the same group 
if $X_i=X_j$. The characterization 
as a reinforced urn model explicates its probability law. 

Rather than an impractical urn with infinitely many balls, a  proper urn metaphor is the 
Hoppe’s urn scheme (\cite{hoppe1984}, \cite{hoppe1987}), also popularly described  as the Chinese Restaurant Process \cite{aldous1985}. 
Consider sampling from an urn that initially only contains  $\alpha > 0$ black balls.  
At each step, a ball is picked at random and, if colored, it is returned in the urn together with an additional ball of the same color; if black, the additional ball is of a new color. Natural numbers are used to label the colors and they are chosen sequentially as the need arises. 
The sampling generates a process $(L_n)_{n \geq 1}$, where $L_n$ denotes the label of the additional ball returned after the $n$th draw.  Clearly, the sequence $(L_n)_{n \geq 1}$ is not exchangeable.
However, if one \lq paints' it, picking colors, when needed, from a 
 color distribution $P_0$,  then the resulting sequence of colors $(X_n)_{n\geq 1}$ has predictive rule (\ref{eq:predDP}), thus it is a P\'{o}lya sequence with parameters $(\alpha$, $P_0)$. In terms of the  Chinese 
 Restaurant metaphor, where customers enter sequentially in the restaurant and are allocated either in a occupied table, or in a new one,  $L_n=j$ denotes that the $n$ customer 
 is seated at table $j$, and for any $n \geq 1$, the label's configuration $(L_1, \ldots, L_n)$
gives the allocation of customers at tables, representing the random partition;  
then tables are painted at random from the color distribution $P_0$. 

For any $n \geq 1$, 
let $\rho_n=(A_1, \ldots, A_{k_n})$ be the random partition of $ \{1, 2, \ldots, n\}$ 
so generated (where $i \in A_j$ if $L_i=j$,  $k_n$ is the number of colors that have been created, or of the occupied tables, and the $A_j$ are in order of appearance). The probability mass function, or {\em partition probability function} of $\rho_n$ is easily computed from the labels' sampling scheme; if $P_0$ is diffuse, we have 
\begin{equation}
\label{eq:EPPF-DP}
 \PP({\rho}_n=(A_1, \ldots, A_{k_n}))= % P(S_1=s_1, \ldots, S_n=s_n)\nonumber \\
 \frac{\alpha^{k_n}}{\alpha^{[n]}} \, \prod_{j=1}^{k_n} (n_j-1)!
\end{equation}
where $\alpha^{[n]}=\alpha (\alpha+1) \cdots (\alpha+n-1)$ and $n_j$ is the number of elements of $A_j$, $j=1, \ldots, k_n$. See \cite{ewens1972}, \cite{antoniak1974},  \cite{hoppe1984}.  $\square$
\end{example}

The above characterization is an emblematic example of the potential of predictive constructions - in this case, explicating the link with random partitions theory. In Bayesian statistics, the capacity of the Dirichlet process of generating random partitions is leveraged 
for model based clustering in many applications; beyond Bayesian statistics, random partitions, and in particular, {\em exchangeable} random partitions, are of interest in a wide range of fields such as combinatorics, 
genetics, population dynamics. 
The construction of \cref{ex:polyasequence} extends more generally, and we recall here some basic notions that we use in the following examples.

Given an exchangeable sequence $(X_n)_{n\geq 1}$ one   can define a random partition $\rho_n$ %$(A_1, \ldots, A_{k_n})$ 
of $\{1, \ldots, n\}$ by letting $i$ and $j$ be in the same group if $X_i=X_j$. Then we have
\begin{eqnarray} \label{eq:EPPF}
& \PP( \rho_n=(A_1, \ldots, A_{k_n}))% \nonumber \\
= p(n_1, \ldots, n_{k_n})
\end{eqnarray}
for a symmetric function $p$ of $(n_1, \ldots, n_{k_n})$, where $n_j$ is the number of elements in $A_j$. A partition probability function $p$ so generated is called the {\em exchangeable partition probability function}
(EPPF) derived from the sequence $(X_n)_{n\geq 1}$. More formally, $p$ is defined on the space of sequences ${\bf 
n}=(n_1,n_2, \ldots)$, identifying  $(n_1, \ldots, n_{k_n})$ as ${\bf n}=(n_1, \ldots, n_{k_n}, 0, 0, \ldots)$. Let 
${\bf n}^{j+}$ be defined from ${\bf n}$ by incrementing $n_j$ by $1$. Clearly an EPPF $p$ must satisfy
$$
p(1,0,0,\dots)=1 \quad \makebox{and } \; p({\bf n})= \sum_{j=1}^{{k_n}+1} \, p({\bf n}^{j+}).
$$
In predictive modeling,  
the conditional probability that the next observation $X_{n+1}$ is in group $j$,
given $x_{1:n}$, is 
\begin{equation}
    \label{eq:predEPPF}
p_j({\bf n})=\frac{p({\bf n}^{j+})}{p({\bf n})}\mbox{ provided }p({\bf n})>0,
\end{equation}
for $j=1,\dots, {k_n}+1$.
The concept of EPPF has been introduced in Pitman \cite{Pitman1995-exchRandomPartitions}.  
A fundamental result in the theory of exchangeable random partitions is Kingman's 
de Finetti-like representation theorem for exchangeable random partitions as mixtures of paint-box processes \citep{Kingman1978-representation}; see also Kingman \cite{kingman1980-book}, 
Pitman \cite{Pitman2002-book}, Zabell \cite{zabell2005}.

\begin{example}\label{ex:SSM}{({\em Species sampling priors})}. 
Pitman \cite{pitman1996} defines a class of predictive rules, in the framework of  species sampling, that generalizes Blackwell and McQueen scheme (\ref{eq:predDP}). 
One underlines sequential sampling from a discrete random distribution for categorical data - in species sampling, sequential draws  from a population of species labeled in the order they are discovered with tags $X_j^*$ i.i.d. from a diffuse distributions $P_0$. Here $X_i$ represents the species of the $i$th individual sampled and takes values in the set of tags. 
In a sample $x_{1:n}$,  
one will observe $k_n$ distinct species, labeled $x_1^*, \ldots, x_{k_n}^*$
and the next observation $X_{n+1}$ will either be one of the species already discovered in the sample, or a new one, formalized in the  predictive distribution 
$ P_n(\cdot \mid x_{1:n})=
\sum_{j=1}^{k_n} p_{j,n}(x_{1:n}) %X_1, \ldots, X_n) 
\delta_{x_j^*}(\cdot) + p_{k_n+1,n}(x_{1:n}) %X_1, \ldots, X_n) 
P_0(\cdot)$. 
In random sampling,
the sequence $(X_n)_{n \geq 1}$ should be exchangeable, and a necessary condition is that $p_{j,n}$ depends on $(x_1, \ldots, x_n)$
only through the sequence of counts  ${\bf n}=(n_{1}, n_{2}, \ldots)$ (terminating with a string of zeroes)
 of the various species in the sample in the order of appearance.  

 A sequence $(X_n)_{n \geq 1}$ is a {\em species sampling sequence} if it is exchangeable and has a predictive rule of the form 
\begin{equation} \label{eq:predsss}
P_n(\cdot)=\sum_{j=1}^{k_n} p_j({\bf n}) \delta_{X_j^*} (\cdot) + p_{k_n+1}({\bf n}) P_0(\cdot),
\end{equation}
for $n\geq 1$, for a diffuse distribution $P_0$, which is also the law of $X_1$. 
Pitman (\cite{pitman1996}, Theorem 14) shows that exchangeability holds if and only if 
there exists a non-negative symmetric function $p$ that drives the probabilities $p_j(\bf n)$ according to \eqref{eq:predEPPF}.  %such that \eqref{eq:predEPPF} holds. 
Then the EPPF of $(X_n)_{n\geq 1}$ is the unique non-negative symmetric function $p$ such that \eqref{eq:predEPPF} holds and $p(1) = 1$. 

From exchangeability, by Proposition \ref{prop:predconv} we have that, with probability one,  
$P_n$ converges to a random distribution $\tF$; Pitman 
(\cite{pitman1996}, Proposition 11) proves that  the convergence is in total variation norm and $\tF$ has the form 
 \begin{equation} \label{eq:predLimiteSss}
\tF(\cdot)=\sum_{j=1}^{k_{\infty}} p^*_j \delta_{X_j^*}(\cdot) + (1-\sum_{j=1}^{k_{\infty}} p^*_j) P_0(\cdot),
\end{equation}
where $p^*_j = \lim n_j /{n}$ is the random 
limit of the %random 
relative frequency of the $j$-th species %\blue{to be} 
discovered, 
the $X_j^*$ are i.i.d. according to $P_0$, independently of the $p^*_j$ and 
$k_{\infty}=\inf\{k : p^*_1+\cdots+p^*_k=1\}$ is the number of distinct values to appear in the infinite sequence $(X_1, X_2, \ldots)$. 

The above results do not provide an explicit description of the distribution of  the weights $p^*_j$, which is however available in remarkable special cases, including the Dirichlet process, that corresponds to $p_j({\mathbf n})= n_{j}/(\alpha+n)$, where $\alpha>0$ is a fixed number;  
the finite Dirichlet process \citep{Ishwaran2001} that assumes $p_j({\mathbf n})=(n_{j}+\alpha/K)/(\alpha+n)$ for $j\leq k_n\leq K$, where $\alpha>0$ and $K\in \mathbb N$ are fixed numbers;  and the two parameter Poisson-Dirichlet process. 
$\square$\end{example}

\begin{example}({\em Two parameter Poisson-Dirichlet process)} \label{ex:twoparameters}
The two parameter Poisson-Dirichlet process, or Pitman-Yor process, introduced in \cite{permanPitmanYor1992} and further studied in \cite{Pitman1995-exchRandomPartitions} and \cite{pitmanYor1997}, can be viewed both as an extension of the Dirichlet process and as the directing random measure  of a specific species sampling sequence  characterized by 
the predictive rule \eqref{eq:predsss} with
\begin{equation}\label{eq:predPD}
    p_{j}(\mathbf n)=\frac{n_j %n_{j,n}
    -\theta}{\alpha+n}\;\mbox{ and } \;p_{k_n+1}(\mathbf n)=\frac{\alpha+k_n \theta}{\alpha+n},
\end{equation}
where $\alpha$ and $\theta$ are real parameters satisfying  $0 \leq \theta < 1$ and $\alpha > - \theta$.
As it appears from (\ref{eq:predPD}), the Poisson-Dirichlet process allows a more flexible predictive structure than the Dirichlet process (corresponding to $\theta=0$): the predictive probability of observing a new species at time $n$ depends on both $n$ and the number $k_n$ of distinct species sampled. 

In analogy to \cref{ex:polyasequence}, the sequence $(X_n)_{n\geq 1}$ can be described 
as a {\em generalized Hoppe's urn} \citep{zabell1997} if $\alpha>0$.  
Initially, the urn only contains one black ball of weight $\alpha$, and balls are selected with probabilities proportional to their weights;  %each time a 
whenever a black ball is selected, it is %put back 
returned into the urn together with {\em two} new balls, one black, having weight $\theta$, and one of a new color, sampled from $P_0$, having weight $1-\theta$. 

As shown by Zabell \cite{zabell1997}, the two-parameter Poisson-Dirichlet process %has been also 
is also characterized through sufficientness (\cref{sec:sufficientness}); namely, by 
the sufficientness  of $n_j$ %$n_{j,n}$ 
and of $k_n$ 
in the predictive probabilities $p_j(\mathbf n)$ 
and $p_{k_n+1}(\mathbf n)$, 
respectively.

For increasing $n$, the predictive distribution $P_n$ converges $\PP$-a.s. to a random measure $\tF=\sum_{j=1}^\infty p^*_j \delta_{X_j^*}$, where the $p^*_j$ have the stick-breaking representation 
$p_j^*=\prod_{i=1}^{j-1}(1-V_i) V_j$, with $V_i \indsim $ Beta$(\alpha+i \theta, 1- \theta)$.
Again, the predictive construction can be exploited to design computational strategies; see e.g. \cite{Bacallado2022-PY}.
$\square$\end{example}

In some examples, the predictive construction does not characterize a novel prior, but explicates the predictive assumptions that are made when adopting a certain (already known) prior law  -- which is clearly important;  
and here is an example of a purely predictive construction, whose de Finetti-like representation and implied prior law was only given %made explicit 
afterwards. 

\begin{example}[{\em Indian Buffet Process}] 
The Indian Buffet process, introduced by Griffith and Ghahramani \cite{griffithsGhahramani2005}, is a clever and popular %very fruitful construction of a 
predictive scheme for infinite latent features problems. Here, exchangeable objects or individuals are each described through a potentially infinite array of features, resulting in an underlying random binary matrix with rows representing the individuals, and an unbounded number of columns, representing the features. 
Specifically, a $1$ in the $[n,k]$ entry of the random matrix indicates that the $n$th individual possesses the $k$th feature. The predictive construction can be illustrated by imagining customers sequentially entering an Indian Buffet restaurant. In this metaphor, customers represent 
individuals, dishes symbolize features, and when a customer selects a dish $z$, it means that the corresponding individual possesses feature $z$. Let $\theta$ be a fixed strictly positive number. The first customer  chooses a Poisson$(\theta)$  number of dishes from a non-atomic distribution 
$F_0$. Then, for $n=1,2,\dots$, the $(n+1)$th customer decides, for each of the $k_n$ dishes already served, whether to take it or not,  
according to its 
popularity, namely she chooses  dish $z$ with probability 
${k_{z,n}}/{(n+1)}$,  where $k_{z,n}$ is the number of 
customers who have already  chosen dish $z$, independently for $z=1,\ldots, k_n$;  
then she chooses a Poisson$({\theta}/{(n+1)})$ number of new dishes, sampling them from $F_0$. 

This construction, which is purely predictive, %provides 
characterizes an exchangeable law for the individuals' features, 
represented as $X_n=\sum_{k=1}^\infty b_{n,k}\delta_{Z_k}$, 
where ${b}_{n,k}=1$ if the $n$th individual possesses feature $Z_k$, and zero otherwise, 
and with the features $(Z_k)_{k\geq 1}$ independently sampled from  $F_0$; and enables  Bayesian learning without an explicit prior law.
Actually, the implied prior law 
was later made explicit \citep{thibauxJordan2007}, and assumes that,
conditionally on a sequence $(p_k)_{k\geq 1}$ of r.v.s  
taking values in $(0,1)$, the $(b_{n,k})_{n,k\geq 1}$ are sampled independently, with $b_{n,k}\sim$ Bernoulli$(p_k)$. In turn, the $(p_k)_{k\geq 1}$ are the points of a Poisson random measure with mean intensity $\lambda(s)=\theta s^{-1}\mathbf 1_{(0,1)}(s)$. 

The Indian Buffet process has been extended for %by 
allowing different distributions on $(p_k,b_{i,k})_{i,k\geq 1}$ (see \cite{james2017}, \cite{camerlenghi2024} and references therein) or random weights  \cite{berti2015}. 
$\square$
\end{example}

\begin{example} [{\em Predictive constructions for continuous data}] 
\label{ex:pred-continuous}
As already mentioned, the predictive rule (\ref{eq:predDP}) of \Polya sequences is appropriate for categorical data, but as it appears from the underlying sufficientness postulate (\ref{eq:suff-DP}), it is not efficient for continuous data, failing to fully exploit the sample information. A similar remark holds for species sampling sequences. 
Indeed, in Bayesian statistics, the Dirichlet process %DP 
and generally discrete prior laws  are mostly used at the {\em latent}  stage of hierarchical models, where, as already noted by Antoniak \citep{antoniak1974}, their power in generating a random partition is an asset; see e.g. 
\cite{tehJordan2010} and \cite{orbanzRoy2015} for overviews. 
A popular example are Dirichlet process mixture models where, conditionally on a latent exchangeable  sequence $(\ttheta_n)_{n\geq 1}$,  the $X_i$ are independent and the distribution of $X_i$ only depends on $\ttheta_i$, with a slight abuse of notation written as 
\begin{eqnarray} \label{eq:DPmixture-lik}
X_i \mid \tilde \theta_i %=\theta_i 
\indsim& k(\cdot \mid \ttheta_i),
\end{eqnarray}
for a kernel density $k$, and   
\begin{eqnarray} \label{eq:DPmixture-prior}
\ttheta_i \mid \tilde{G} &\iidsim& \tG , \quad \mbox{with} \quad \tilde G \sim DP(\alpha, G_0).
\end{eqnarray}
This gives an exchangeable mixture model: 
$$X_i \mid \tilde{G} 
\iidsim f_{\tG}(\cdot) 
= \int k(\cdot \mid \theta) d \tG(\theta).$$ 
The predictive rule of the Dirichlet process induces a parametric model on the random partition of the $\ttheta_i$'s, and $X_i$ and $X_{j}$ are set in the same cluster if $\ttheta_i= \ttheta_{j}$. 
This is a powerful and popular use of predictive rules such as  (\ref{eq:predDP}), which however would not be appropriate as  predictive learning rules at the observation level with continuous data. 

An approach to address this difficulty is to smooth the trajectories generated by discrete priors thus obtaining novel prior laws that almost surely select absolutely continuous distributions; for example, a constructive smoothing of the Dirichlet process 
through Bernstein polynomials was proposed, from an idea of Diaconis, by \cite{petrone1999-Bpols} and extended by \cite{petroneVeronese2010}, who obtained a general class of mixture priors.
However, in these constructions, and more generally in Bayesian mixture models with a discrete prior law on the mixing distribution, the predictive distribution is not analytically tractable, requiring to average with respect to the posterior law on the huge space of partitions (see e.g. \cite{wade2014-scandinavian}).  

A predictive approach may consist in directly smoothing the empirical distribution in predictive rules such as (\ref{eq:predDP}). Recent proposals are {\em kernel-based Dirichlet sequences} \citep{bertiRigo2023-kernel}, that are defined as exchangeable sequences whose predictive distributions 
spread the point mass $\delta_{X_i}$ in (\ref{eq:predDP}) through a probability kernel $K$, as 
$$ 
P_n(\cdot)= \frac{\alpha}{\alpha+n} P_0(\cdot) + \frac{1}{\alpha+n} \sum_{i=1}^n K(\cdot \mid  X_i).
$$
The exchangeability condition imposes that the kernel 
$K$ must satisfy $K(\cdot\mid x)=P_0(\cdot\mid \mathcal G)(x)$ for a sigma-algebra $\mathcal G$ on $\mathbb X$ (%\cite{berti2010}, 
\cite{bertiRigo2023-kernel}, 
\cite{SarievSavov2024}). In particular, 
in their perhaps most natural specification, with $K(\cdot\mid x)\ll P_0$ for every $x\in \mathbb X$, 
 the underlying $\tF$ is a mixture model with kernels having known disjoint support (e.g. %random 
 a histogram with known bins), see \citep{SarievSavov2024}, Theorem 3.13; which is clearly limited for  statistical applications.

This example hints that exchangeability constraints may be quite restrictive if one wants to have both a tractable predictive rule and some desired modeling features. Here is a predictive construction that is analytically simple, and gives another \lq smoothed version' of (\ref{eq:predDP}).
We start from $P_0(\cdot) =\int K(\cdot \mid  \theta) dG_0(\theta) \equiv F_{G_0}(\cdot)$ 
and recursively update our prediction as 
$$
P_n(\cdot)= (1-\alpha_n) P_{n-1}(\cdot) + \alpha_n 
F_{G_{n-1}}(\cdot \mid X_n),$$
with 
$F_{G_{n-1}}(\cdot\mid X_n) = 
\int K(\cdot \mid  \theta) dG_{n-1}(\theta\mid X_n)$, where $G_{n-1}(\cdot\mid X_n)$ 
denotes the posterior distribution obtained from the prior $G_{n-1}$ and updated based on $X_n$, and 
$G_n(\cdot)= (1-\alpha_n) G_{n-1}(\cdot)+ \alpha_n G_{n-1}(\cdot \mid X_n)$; 
and the $\alpha_n$ are real numbers in $(0,1)$  satisfying $\sum_{n=1}^\infty \alpha_n=+\infty$ and $\sum_{n=1}^\infty \alpha_n^2<+\infty$.
(We may recognize \lq Newton's algorithm'  
\cite{newtonZhang1999}, popularly used for fast computations in Dirichlet process mixture models). 
This predictive rule does not characterize an exchangeable sequence $(X_n)_{n \geq 1}$; 
it is however a martingale and preserves exchangeability asymptotically. More specifically, it can be shown (\citep{fortini2012hierarchical}) that $P_n$ converges to a mixture model $F_{\tG}(\cdot)$ with a novel prior law on $\tilde{G}$, as we will expand in Section \ref{sec:algorithms}. 
$\square$\end{example}

Further remarkable examples, among many, include the class of {\em reinforced urn processes} (\cite{walkerMuliere1997-betaStacy}, \cite{muliereSecchiWalker2000}); see Example \ref{ex:reinforced-Hoppe}), 
and constructions aimed at addressing the rigidity of the global clustering induced by the predictive structure (\ref{eq:predDP}) of the Dirichlet process in the case of multivariate random distributions; 
for example,  
\cite{wade2011} obtain a nested clustering for multivariate data 
characterized by an {\em enriched} Hoppe's urn scheme. There are many more predictive constructions based on the idea of reinforcement, 
that characterize forms of partial exchangeability, as we introduce in the next section. 

\section{Partial exchangeability for more structured data} \label{sec:partial exchangeability}

As seen, exchangeability in Bayesian statistics is the natural predictive requirement in homogeneous repeated trials;  but of course data may be much more complex. Still, in many cases the data show  forms of symmetry, such that exchangeability assessments, judging that the individuals' labels in some data sub-structures do not bring any information for prediction, are still natural. 
In this section we review the concept of {\em partial exchangeability}, i.e. invariance under a group of permutations. For the sake of space,
we focus on the main concepts and on 
de Finetti-like representation theorems that again justify the Bayesian inferential model  
from predictive assumptions. The predictive characterization in Theorem \ref{th:partial-by-pred} is new.  
We start with the notion of partial exchangeability in the sense of de Finetti and a point we will underline 
is that 
other forms of partial exchangeability are ultimately related to it.

\subsection{de Finetti's partial exchangeability} \label{sec:partial}

A first notion of partial exchangeability was introduced by de Finetti in \cite{deFinetti1937-scparziale}. 
 It is interesting to report some excerpt from this, perhaps less known, work by de Finetti, as, beyond historical interest, it clearly shows what are the applied contexts that suggest a partial exchangeability assessment. 
de Finetti \citep{deFinetti1937-scparziale} refers to replicates of trials of different {\em types}. 
\begin{quote} %\lq \lq 
Exchangeability can still be considered, but specifying that  the trials are divided into a certain number of types, 
and what is judged exchangeable are the events of the same type.%\rq \rq
\end{quote}
As a simple example, he considers 
tossing two coins. If the two coins look exactly alike, one may judge all tosses as exchangeable; 
at the opposite extreme, if the coins are completely different, one would consider the corresponding tosses as two separate exchangeable sequences, completely independent of each other. However, 
if the coins look almost alike, but not to the point of considering them exchangeable,  then 
\begin{quote}%\lq \lq 
observations of the tosses of one coin will still be capable of influencing, although in an {\em less direct} manner, our probability judgment regarding the tosses of the other coin.%\rq \rq
\end{quote}
Again from \cite{deFinetti1937-scparziale}:
\begin{quote}%\lq \lq 
One can have any number of types of trials,%\rq \rq 
\end{quote}
for example, different coins, 
or tosses of one coin by two different people, or under different conditions of  
temperature and atmospheric pressure.
\begin{quote} % \lq \lq 
    If the types are in a countable or continuous set, prediction would typically refer to a new type; thus, information will exclusively be {\em indirect}. %\rq \rq. 
\end{quote}(de Finetti's note \cite{deFinetti1937-scparziale} includes several more examples, e.g. in insurances and in treatments' effects and debatable causality).
In the Bayesian literature, partial exchangeability in the sense of de Finetti is usually referred to 
random sampling in
parallel experiments; as we see, it refers more generally to fixed-design regression where the \lq types' are induced by covariates. 
As in the coins example, experiments are run independently, nevertheless 
each experiment brings information on the other ones; and because information is described through probability (see \cref{sec:introduction}), the joint probability law will assume a form of dependence across the  experiment-specific samples, i.e. of sharing information across experiments in prediction. 

Formalizing, consider a family of sequences $(X_{n,j})_{n \geq 1}$ of r.v.'s 
where $X_{n,j}$ describes the $n$th observation of type $j$, $j=1, \ldots, M$; $M$ can be finite or infinite, and the types may be  taken from a continuum of types. 
For more compact notation, we may arrange them in an array $[X_{n,j}]_{n \geq 1, j=1, \ldots, M}$.  
The family of sequences $[X_{n,j}]_{n \geq 1, j=1, \ldots, M}$ is {\em partially exchangeable in the sense of de Finetti} 
if its 
probability law is invariant under separate finite permutations 
within each column; that is,  if 
$$
[X_{n,j}]_{ n\geq 1 
, j=1,\dots,M%\in J
} \stackrel{d}{=} [X_{ {\sigma}_j(n), j}]_{n\geq 1,j=1,\dots,M}%  i\in J
 $$
for any finite permutation 
${\sigma}_j$, $j=1,\dots,M$. 
Roughly speaking, observations are exchangeable inside each experiment, but not across experiments. Aldous (\cite{aldous1985}, page 23) refers to this symmetry property as {\em exchangeability over $V$}. A sequence $(Y_n)_{n\geq 1}$ is exchangeable over $V$ if $(V, Y_1, Y_2, \ldots) \stackrel{d}{=} (V, Y_{{\sigma(1)}}, Y_{{\sigma(2)}}, \ldots)$ for any finite permutation ${\sigma}$. 
Partial exchangeability 
corresponds to each sequence $(X_{n,j})_{n\geq 1}$ being exchangeable over all the others, collected as $V_j$.

The representation theorem extends to 
partially exchangeable families of sequences.  
\begin{theorem}[{Law of large numbers and de Finetti representation theorem for  partially exchangeable sequences
}] 
Let $[X_{n,i}]_{n \geq 1, i=1, \ldots, M}$ $\sim \PP$ be a partially exchangeable array in the sense of de Finetti. Then:
\begin{itemize}
    \item[{\rm i)}] For $n_1, \ldots, n_M \rightarrow \infty$, the vector of the marginal empirical distributions $(\hat{F}_{n_1}, \ldots, \hat{F}_{n_M})$ converges weakly to a vector of random distributions $(\tF_1, \ldots, \tF_M)$, $\PP$-a.s.;
    \item[{\rm ii)}] 
For any $n \geq 1$ and measurable sets $A_{i,j}$,
\begin{align*}
&\PP( \cap_{j=1}^M (X_{1,j} \in A_{1,j}, \ldots, X_{n_j,j} \in A_{n_j,j}) ) \\
& \quad =\int \prod_{j=1,\dots,M}\prod_{i=1,\dots,n_j} F_j(A_{i,j}) d\pi(F_1,\dots,F_M),
\end{align*}
where $\pi$ is the joint probability law of $(\tF_1, \ldots, \tF_M)$.     
\end{itemize}
\label{th:representation-partial}
\end{theorem}
A proof is in \citep{aldous1985}, pp. 23-25. The representation {\rm ii)} says that  conditionally on $(\tF_1, \ldots, \tF_M)$,  the sequences $(X_{n,j})_{n \geq 1}$ are independent and, within sequence $j$,   the $X_{n,j}$  
are i.i.d. according to $\tF_j$. That is, 
a de Finetti-partially exchangeable array is obtained by first picking $(F_1, \ldots, F_M)$ from a {\em joint} prior distribution  
and then for each $j=1, \ldots, M$ picking $X_{n,j} \iidsim F_j$, independently for different $j$. 

\begin{example}[{\em Hierarchical models}]\label{ex:hierarchical-model}
Consider random samples $(X_{1, j}, \ldots, X_{n_j,j})$, $j=1,\dots,M$, from $ M$ independent parallel experiments, say of binary r.v.'s with experiment specific means $\theta_j$,  
 and the classic problem of estimating the mean vector $(\theta_1, \ldots, \theta_M)$. This problem is also rephrased (e.g. in \cite{efronHastie-CASI}) as predicting $X_{n_j+1, j}$ 
in each experiment.
Bayesian hierarchical models  are a powerful tool for  borrowing strength across experiments and for shrinkage. In this example, a basic hierarchical model regards the parameters as r.v.'s $\ttheta_j$, sampled from a latent distribution,  and assumes a hierarchical structure as follows
\begin{align*}
\ttheta_j \mid \lambda \iidsim \pi(\cdot \mid \lambda), & \quad \mbox{with} \; \lambda \sim h(\cdot),\\ 
(X_{1,j}, \ldots, X_{n_j,j}) \mid \theta_1, \ldots, \theta_M &\iidsim \; \mbox{Bernoulli}(\theta_j), 
\end{align*}
independently across $j$ (here $\pi$ and $h$ denote densities). The theoretical justification of this model comes from the assessment of partial exchangeability of the sequences $(X_{n,j})_{n\geq 1}$, 
and of exchangeability of the experiments.  By partial exchangeability, the observations are exchangeable inside each experiment, but not across them; and  
the sequences $(X_{n,j})_{n \geq 1}$ are only {\em conditionally} independent given $(\ttheta_1, \ldots, \ttheta_M)$, which naturally implies sharing information. 
The dependence across experiments is introduced through the joint 
prior law of $(\ttheta_1, \ldots, \ttheta_M)$ - here, the joint density 
$\pi(\theta_1, \ldots, \theta_M) = \int \prod_{j=1}^M \pi(\theta_j \mid \lambda) h(\lambda) d \lambda$. 
$\square$ 
\end{example}
\begin{example}
In hierarchical models as above, the prior law $\pi$ expresses the judgement that the $\ttheta_j$ - informally, the experiments - are exchangeable. 
But, more generally, the groups may be induced by covariates, or refer to time or space, etc., and the prior would express other forms of dependence. 
For example, consider clinical trials where the outcome is binary (tumor shrank or not), run in different hospitals, with patients receiving the same treatment in all hospitals. Here one would judge that the hospitals' labels do not bring information, that is, the hospitals (the corresponding model parameters $\ttheta_j$'s) are exchangeable; as in the example above. 
Now suppose that different treatments, say different dosages $z_j$, are administrated in different hospitals. 
Then  the groups' labels are 
relevant,  and the prior will not treat the  $\ttheta_j$'s as exchangeable, but
will incorporate the effect of the covariate;
for example, 
 express the idea that $\ttheta_j$ and $\ttheta_k$ are similar if the dosages $z_j$ and $z_k$ are close.

With no replicates inside the groups and no random effects - i.e. in a basic fixed-design regression context where the probability of success is $\ttheta_j=g(z_j; \tilde{\beta})$ for a known $g$ and unknown $\tilde{\beta}$ - partial exchangeability reduces to conditional independence of the $X_{1,j}$'s given $\tilde{\beta}$, with dependence across $j$ modeled through the regression function.$\square$
\end{example}

Marginally, the result of Theorem \ref{th:representation-partial} is not surprising, because each sequence $(X_{n,j})_{n \geq 1}$ is exchangeable and one obtains the marginal directing random measure $\tF_j$ (the statistical model and the prior for experiment $j$) as seen in \cref{sec:exch}; in particular, from 
\begin{equation} \label{eq:marginal-pred} 
P_{n,j} (\cdot) \equiv \PP(X_{n+1,j}\in \cdot\mid X_{1,j},\dots,X_{n,j}) \rightarrow \tF_{j}(\cdot).
\end{equation}
But this is not enough: the theorem characterizes the {\em joint} distribution (the joint prior law) of $(\tF_1, \ldots, \tF_M)$. 

It is this joint distribution that induces probabilistic dependence 
across the individual 
sequences,  i.e. {\em borrowing strength} 
in prediction. 
As in  \cref{ex:hierarchical-model}, 
rather than the marginal predictive distribution $P_{n,j}$ as in 
(\ref{eq:marginal-pred}), 
a more interesting predictive distribution refers to future results in experiment $j$ given past observations therein  {\em and} observations in all the related experiments. Aldous's notion of exchangeability over $V$ is particularly suited.   Let  
$V=[(X_{n, i})_{n \geq 1; i=1, \ldots, M; i \neq j}]$ collect the observations in all the  experiments but the $j$th. Then, with $\PP$-probability one, 
\begin{align} \label{eq:pred-V}
& \lim_n \PP(X_{n+1, j} \in \cdot \mid\! X_{1,j},\dots, X_{n,j}, V)   \\
&= \lim_n \PP(X_{n+1, j} \in \cdot \mid\! (X_{k,i})_{k\leq n,i=1,\dots,M})\!  =\! \tF_j(\cdot).
\nonumber
\end{align}
For a 
proof, see  \cite{aldous1985}. Informally, %\st{in the limit,} 
$V$ does not carry additional information only in the limit, when the experiments become independent. 

Note that the rows $([X_{n,1}, \ldots, X_{n,M}])_{n \geq 1}$ of a de Finetti partially exchangeable array are an exchangeable sequence,  with directing random measure $\tF$ on the product space $(\mathbb X_1 \text{\Large $\times$} \cdots \text{\Large $\times$} \mathbb X_M)$ that assumes independent components, i.e. $\tF=\text{\Large $\text{\Large $\times$}$}_{j=1}^M \tilde F_j$.
This implies that the relationship between variables in distinct columns of the array $[X_{n,j}]_{n\geq 1,j=1,\dots,M}$ 
is solely 
driven by the probabilistic link between the marginal directing random measures. Again, the sequences do not {\em physically} interact. 

Also note that $\PP(X_{n+1,j} \in \cdot \mid (X_{m,i})_{m\leq n,i=1,\dots,M}) = 
E(\tF_j(\cdot) \mid(X_{m,i})_{m\leq n,i=1,\dots,M})$ and, 
as shown in equation (\ref{eq:pred-V}), 
approximates $\tF_j$ for $n$ large. Since 
the sequence $(X_{n,j})_{n \geq 1}$ is exchangeable, an alternative approximation of $\tF_j$ is provided by the predictive distribution
$P_{n,j}(\cdot)= %=\PP(X_{n+1,i}\in\cdot\mid X_{1,i},\dots,X_{n,i})=
E(\tF_j(\cdot) \mid X_{1,j},\dots,X_{n,j})$, that is only based on the past observations in experiment $j$. However, the latter uses less information,  resulting in a less efficient approximation:
\begin{align*}
& E\Bigl(\bigl(\tilde F_j(A)-\PP(X_{n+1,j}\in A\mid X_{1,j},\dots,X_{n,j})\bigr)^2\Bigr) \\
& \geq
E\Bigl(\!\bigl(\tilde F_j(A)\!-\!\PP(X_{n+1,j}\in A\mid \!\! (X_{k,i})_{k\leq n,i=\!1,\dots,M}
\bigr)^2\!\Bigr).
\end{align*}
If the sequences $(X_{n,j})_{n\geq 1}$ are independent, both methods yield the same result; 
there is  no gain of information in considering the entire array.

\medskip
The predictive characterization of exchangeability of \cref{th:exch-by-pred} can be extended to de Finetti partial exchangeability. 
\begin{theorem}\label{th:partial-by-pred}
 A family of sequences 
 $[X_{n,j}]_{n \geq 1, j=1, \ldots, M}$
    is partially exchangeable in the sense of de Finetti 
    if and only if for every finite $k\leq M$  and every $n\geq 0$, the following conditions hold: 
    \begin{itemize}
        \item[{\rm i)}] For every 
        measurable sets $A_1,\dots,A_k$ and every $i=1,\dots,k$
        $$
        P_n(A_1\times\dots\times A_k\mid (x_{m,j})_{m\leq n,j\leq k}
          )
        $$
                 is symmetric in $(x_{1,i},\dots,x_{n,i})$;
                        \item[{\rm ii)}] 
                The set function that maps $\{A_j,B_j:j\leq k \}$ into 
                \begin{align*}
                                \int_{A_1\times\dots\times A_k}\!\!\!\!\!\!\!\!\!\!\!\!\!\!\!\!\!\!\!\!
                                P_{n+1}&(B_1\times\dots\times B_k\mid (x_{m,j})_{m\leq n+1,j\leq k})\\
                                &dP_n((x_{n+1,1},\dots,x_{n+1,k})
              \mid (x_{m,j})_{m\leq n,j\leq k})
                    \end{align*}
                              is symmetric in $(A_{i},B_{i})$ for every $i=1,\dots,k$,
 \end{itemize}
 where $P_n$ is to the conditional distribution of $(X_{n+1,j})_{j\leq k}$, given $(X_{m,j})_{m\leq n,j\leq k}$ and $P_0(\cdot\mid  (x_{m,j})_{m\leq 0,j\leq k})$ is meant as $P_0$.
\end{theorem}
The proof is provided in Section A4 %\cref{sec:proof4} 
of the Supplement \citep{supplement}. 
The predictive characterization in \cref{th:partial-by-pred} is natural when at each time $n$, a new observation is made for each type. 
In fact, de Finetti's  partial exchangeability can be described as invariance of the law of a sequence $(X_1,X_2,\dots)$ to the permutations acting  separately on $M$ groups of random variables,  forming  a partition of $(X_n)_{n\geq 1}$. In this context, a predictive characterization of partial exchangeability 
should account for the structure of the partition into groups, likely in a nontrivial way.

\medskip

Statistical applications
are broad;  hierarchical models are  one of the key strengths of Bayesian statistics, and great flexibility in sharing information in prediction is enabled through the prior law $\pi$. 
While the choice of $\pi$ in parametric models is a long studied problem,   
defining a nonparametric prior on the vector of random distributions  $(\tF_1, \ldots, \tF_M)$ 
has 
posed challenges; yet, a wealth 
of proposals is nowadays available, 
many of which are defined through, or benefit from, predictive characterizations. 

\begin{example}[{\em Hierarchical Dirichlet process}] \label{ex:HDP} 
The hierarchical Dirichlet process has been introduced in \cite{Teh2006} to model shared %sharing of 
clusters among groups of data.  
For example, consider the problem of modelling shared topics  in a %set - or 
corpus of $M$ documents, where a \lq\lq topic'' induces a multinomial distribution over 
the words of a given dictionary, and a document $j$  is defined as an unordered - exchangeable - sequence  of words $(X_{n,j})_{n\geq 1}$. 
 For each document $j$,  we have a latent sequence of topics $(\ttheta_{n,j})_{n \geq 1}$, and $X_{n,j} \mid \ttheta_{n,j}\sim k(\cdot \mid \ttheta_{n,j})$. The family of sequences $(\ttheta_{n,j})_{n\geq 1}$  
 for $j=1, \ldots, M$ is assumed to be partially exchangeable, thus conditionally independent given the vector $(\tG_1, \ldots, \tG_M)$ of the random distributions of topics in the documents.

A predictive construction that allows for document-specific clustering into topics and shared topics across documents is given in \cite{Teh2006} as a hierarchical Chinese Restaurant process, or {\em Chinese franchise}, which is reminiscent of the hierarchical Hoppe's urn proposed for infinite hidden Markov models by \citep{Beal2002}, as we here describe. To each document $j$, let us associate a Hoppe's urn ${\mathcal U}_j$, that initially only includes $\alpha_j > 0$ black balls, then sample from each urn as described in Example \ref{ex:polyasequence}; however, whenever a new color is needed, pick it from an \lq \lq oracle urn" which is another Hoppe's urn, with initial number $\gamma$ of black balls and color distribution $G_0$, for simplicity assumed to be diffuse. The draws from the oracle Hoppe's urn represent the labels of the topics available for all documents; when colored, they are an exchangeable sequence with directing random measure $\tG \sim DP(\gamma, G_0)$. 
Conditionally on all the draws from the oracle urn (thus on $\tG$), 
the colored drawings from the document-specific Hoppe's urns $\mathcal{U}_j$ are independent exchangeable sequences $(\ttheta_{n,j})_{n \geq 1}$, with 
\begin{align*} 
\tilde\theta_{n,j} \mid \tG_j & \iidsim \tG_j \,  \\
\tG_j \mid \tG & \sim DP(\alpha_j, \tG) \, , 
\end{align*}
independently across $j$, and 
in turn, $\tG \sim DP(\gamma, G_0)$. This defines a  Hierarchical Dirichlet Process prior for $(\tG_1, \ldots, \tG_M)$, with parameters $(\alpha_1, \ldots, \alpha_M, \gamma, G_0)$.

This model is based on an
exchangeable structure at the latent stage, 
where (in line with the considerations in \cref{ex:pred-continuous}), one envisages an actual random partition. 
Differently from \cref{ex:polyasequence}, here the draws from the Hoppe's urns are latent variables,  since, at any time, an \lq \lq old" 
color could 
be picked 
from $\mathcal U_j$  or from the oracle urn. 
This leads to computational challenges, as we discuss in the next section.

Extensions of the hierarchical Dirichlet process 
include the hierarchical Pitman-Yor process \citep{teh2006-HPY}, 
and hierarchies of general discrete random measures leading to interesting combinatorial structures;  see Camerlenghi {\em et al.} \cite{camerlenghi2019-theory-hierarch}, and \cite{Catalano2023} and references therein.  
$\square$\end{example}

\subsection{Asymptotic partial exchangeability}\label{sec:asymptoticpe}
In the above example, and in fact more generally with partially exchangeable data, 
the predictive and the posterior distributions are not available in a \lq \lq closed'' 
(ideally, conjugate) analytic form - with a sometimes significant computational cost. To give some insight on the reasons for this difficulty, suppose for brevity that  we only have  two partially exchangeable sequences $(X_n)_{n\geq 1}, (Y_n)_{n\geq 1}$ 
and aim for an analytically tractable 
predictive distribution $\PP(X_{n+1} \in \cdot \mid x_{1:n},y_{1:n}, y_{n+1})$. 
Assuming $X_{n+1}$ $\indep Y_{n+1} \mid X_1,\dots,X_n,Y_1,\dots,Y_n$ would help, but typically  breaks partial exchangeability, except for trivial cases. On another extreme, a functionally simple 
inclusion  of $Y_{n+1}$ in the expression of the predictive distribution above may create {\em direct} dependence between the two sequences, and 
rather give  
{\em interacting} stochastic processes (see e.g. \cite{AlettiCrimaldiGiglietti2023-interacting} and references therein).
In fact, in partially exchangeable constructions one typically identifies a conditional independence structure of the kind $X_{n+1} \indep Y_{n+1} \mid X_1,\dots,X_n,Y_1,\dots,Y_n, U$ where $U$ is a latent random variable; (in a nonparametric setting with discrete priors, $U$ is an appropriate feature 
of the random partition, see e.g. \cite{camerlenghi2019-theory-hierarch}). 
While this may allow approximation schemes, for example through Gibbs sampling (\cite{Teh2006}, \cite{Lijoi2014}, \cite{Camerlenghi2017-JMA}), 
integrating out the latent $U$ to obtain the predictive distribution $\PP(X_{n+1} \in \cdot \mid x_{1:n}, y_{1:n}, y_{n+1})$ is not, generally, analytically manageable.   

Although there has been a sensible effort to find “closed form” expressions for predictive distributions for partially exchangeable models, the above considerations highlight that it is not easy to have partial exchangeability {\em and} also an analytically tractable predictive rule. 
This raises interest for predictive structures that only preserve partial exchangeability asymptotically, but are computationally easier. 
\citep{fortini2018} have proposed the notion of 
{\em partially conditionally identically distributed} (partially c.i.d.) sequences, which is equivalent to partial exchangeability for stationary data and preserves main 
properties of partially exchangeable sequences. In particular, partially c.i.d. processes are asymptotically partially exchangeable. Natural extensions of reinforced stochastic processes turn out to be partially c.i.d. For example, consider a family of sequences $[X_{n,j}]_{n\geq 1,j=1,\dots,M}$ such that $\PP(X_{1,j}\in\cdot)= P_{0,j}(\cdot)$ and for any $n \geq 1$
\begin{multline*}
     \PP(X_{n+1,j}\in\cdot\mid
     (X_{k,i})_{k\leq n, i=1,\dots,M}) 
     =\frac{\alpha_{0,j}P_{0,j}(\cdot)+\sum_{k=1}^n W_{k,j}\delta_{X_{k,j}}(\cdot)}{\alpha_{0,j}+\sum_{k=1}^n W_{k,j}},
\end{multline*}
 where  the random weights $W_{m,j}$ are strictly positive r.v.'s
 and may be functions of the observed values of the other sequences. It is proved in \citep{fortini2018} that if,  
 conditionally on $(X_{m,i},W_{m,i})_{m\leq n,i\leq M}$, the future observations  $X_{n+1,1},\dots,X_{n+1,M}$ are mutually independent and $W_{n,j}$ is independent of $X_{n,j}$,  $j=1,\dots,M$,   
then the sequences $[X_{n,j}]_{n\geq 1,j=1,\dots,M}$ are 
partially c.i.d. 

\subsection{ Markov exchangeability} \label{sec:markov}
The representation theorem \ref{th:representationTh-exch}  for exchangeable sequences gives the conceptual justification of the Bayesian inferential setting for random sampling.
A natural question is if there is a symmetry notion and a de Finetti-like representation theorem that justify the Bayesian inferential setting for Markov chains. 
In this section we recall the notion of Markov exchangeability %by Diaconis and Freedman 
\citep{diaconisFreedman1980} and its predictive characterization \cite{fortini2017}, and review Diaconis and Freedman's representation theorem
and a different representation that relates Markov exchangeability to partial exchangeability in the sense of de Finetti. 
Many models, for instance state-space models for nonstationary time series, are based on Markov chains; thus, these results also give insights on predictive constructions for Bayesian learning with temporal data, beyond Markov chains. 
\medskip

Let $\mathbb X$ be a  finite or countable set that includes at least two points, and $(X_n)_{n\geq 0}$ be a sequence of r.v.'s taking values in $\mathbb X$, 
and with probability law $\mathbb P$. 
The process $(X_n)_{n\geq 0}$ is {\em partially exchangeable in the sense of Diaconis and Freedman}, or, following the terminology of \cite{zaman1984} and \cite{zabell1995}, {\em Markov exchangeable}, if its probability law is invariant under finite permutations that do not alter the number of transitions between any two states; 
more precisely, if 
$\PP(X_0=x_0, \dots,X_n=x_n)=\PP(X_0=x'_0,\dots,X_n=x'_n)$ whenever $(x_0,\dots,x_{n})$  and  $(x_0',\dots,x'_{n})$ have the same initial value (i.e. $x_0=x_0'$)
and exhibit the same number of transitions from state $i$ to state $j$, for every $i,j\in\mathbb X$.

Under a recurrence condition, Diaconis and Freedman prove a de Finetti-like representation theorem for  Markov exchangeable sequences. The process $(X_n)_{n \geq 0}$ 
is recurrent if the initial state $x_0$ is visited infinitely many times
with probability one. Let us also define, for any $i,j \in \mathbb X$, the
transition counts 
$T_{i,j}^{(n)}$ as the number of transitions from state $i$ to state $j$ in $(X_0,\dots,X_n)$, 
and the matrix of normalized transition counts as the matrix with elements 
$\hat T_{i,j}^{(n)}=T_{i,j}^{(n)}/\sum_{k\in \mathbb X} T_{i,k}^{(n)}$ if the sum is different from zero, and zero otherwise.
\begin{theorem}[Diaconis and Freedman \citep{diaconisFreedman1980}, Theorem 7 and Remark 25]
\label{th:representation-markov}
Suppose that the process $(X_n)_{n \geq 0}$, starting at $x_0$, is recurrent. If $(X_n)_{n \geq 0}$ is Markov exchangeable, then 
\begin{itemize}
\item[{\rm i)}] With probability one, the matrix of normalized transition counts converges (in the topology of coordinate convergence) to a random limit $\tilde Q$; 
\item[{\rm ii)}] conditionally on $\tilde{Q}$, the process $(X_n)_{n \geq 0}$ is a Markov chain with transition matrix $\tilde Q$. 
\end{itemize}
\end{theorem}
In applications in Bayesian statistics, the probability law of the random limit $\tilde Q$, which is uniquely determined by $\PP$,  plays the role of the prior.

The following result gives a predictive characterization of Markov exchangeable processes;  it parallels Theorems \ref{th:exch-by-pred} and \ref{th:partial-by-pred}. 
\begin{theorem}[\citep{fortini2017}] \label{th:markov-by-pred}
A predictive rule $(P_n)_{n \geq 0}$ for a process $(X_n)_{n\geq 0}$ with $X_0=x_0$ and $X_n \in \mathbb X$ characterizes a Markov exchangeable process %\st{$(X_n)_{n \geq 0}$} 
if and only if for every $n\geq 0$ and every $(x_1,\dots,x_n)$ the following conditions hold:
\begin{itemize}
    \item[{\rm i)}]  For every $j\in\mathbb X$,
        $\PP(X_{n+1}=j\mid %x_{1:n}
        x_{0:n})
           $
   depends on $x_{0:n}$ only through $x_0$ and its transition counts;
       \item[{\rm ii)}] For every $k\geq 1$ and all strings $\bf{y}$, ${\bf y}'$ and ${\bf z}$ of elements in $\mathbb X$ that do not contain $x_n$ and have no common elements, the function that maps $(\bf y,y')$ into
    $
        \PP((X_{n+1},\dots,X_{n+k})=({\bf y,z},x_n,{\bf y',z},x_n)\mid x_{0:n})
            $
    is symmetric in ${\bf( y,y')}$. 
    \end{itemize}
\end{theorem}
This is proved in \cite{fortini2017}, where a predictive condition for recurrence is also given. 
The characterization becomes much simpler when the predictive distribution of $X_{n+1}$ only depends on the last visited state $x_n$ and on the $x_n$th row $\mathbf t_{x_n}$ of the matrix of transition counts: $\PP(X_{n+1}\!=\!y\mid \!x_{0:n})\!=\!p(y\!\mid\! x_n,\mathbf t_{x_n})$.
In this case, $(X_n)_{n\geq 0}$ is Markov exchangeable if and only if 
\begin{align}    \label{eq:markov} 
 p(y\mid\!x, \mathbf t\!)   
 p(z\!\mid \!x,\mathbf t+\mathbf e_y\!)=p(z\mid\! x,\mathbf t\!) p(y\mid\! x,\mathbf t+\mathbf e_z\!)
\end{align}
for every $\mathbf t$ and every $x,y,z\in\mathbb X$, where $\mathbf e_y$ and $\mathbf e_z$ have a $1$ at positions $y$ and $z$, respectively, and $0$ elsewhere. % (see \cite{fortini2017}). 
\begin{example}[{\em Reinforced urn scheme}]
\label{ex:reinforcedUrn-markov finito} Let $\mathbb X$ be finite or countable, and let $(X_n)_{n\geq 0}$  satisfy $X_0=x_0$ and 
          \begin{equation} 
   \label{eq:reinforced}
\PP(X_{n+1}=y\mid x_{0:n})=\frac{\alpha_{x_n} q_{x_n}(y)+ t_{x_n,y}} {\alpha_{x_n}+\sum_{j\in\mathbb X} t_{x_n,j}},
\end{equation}
where for every $x$, $\alpha_x$ is a positive number and $q_x(\cdot)$ is a probability mass function on $\mathbb X$.  
It is easy to verify that the predictive rule \eqref{eq:reinforced} satisfies \eqref{eq:markov}.
Moreover, by the L\'evy extension of the Borel-Cantelli lemma, the state $x_0$ is visited infinitely many times (see \cite{fortini2017} for the details). Hence, $(X_n)_{n\geq 0}$ 
is a mixture of Markov chains. 

For a finite state space, 
Zabell \cite{zabell1995} derived 
the predictive rule \eqref{eq:reinforced} from Johnson’s sufficiency postulate and 
assuming that $(X_n)_{n\geq 0}$ is recurrent and Markov exchangeable, characterizing independent 
 Dirichlet prior distributions on the rows of the random transition matrix.
$\square$ 
\end{example}

By the result i) in Theorem \ref{th:representation-markov}, the random transition matrix $\tilde Q$ has an empirical meaning as the limit of the matrix of normalized transition counts. It is of interest to know whether it also has an interpretation in terms of prediction, in the spirit of Proposition \ref{prop:predconv}.
We can show that this is possible %\st{this is the case} 
by leveraging on an alternative characterization of Markov exchangeable processes, hinted in \cite{deFinetti1959} and \cite{zabell1995} and developed in \cite{fortini2002}, in terms of partial exchangeability of the matrix of successor states. 

The $n$th successor state of a state $x$ is defined as the state visited by the process  $(X_n)_{n\geq 0}$ just after the $n$th visit to state $x$. Denoting by  $\tau_n(x)$  the time of the $n$th visit to state $x$, with $\tau_n(x)=\infty$ if $x$ is not visited $n$ times, we can define the $n$th successor state of $x$ as
$$
S_{x,n}=X_{\tau_n(x)+1}
$$
if $\tau_n(x)$ is finite.  Let us collect the successor states for all $x$ in an array $[S_{x,n}]_{x \in \mathbb X, n \geq 1}$. 
Note that the $x$th row of $[S_{x,n}]$ has infinite length if the state $x$ is visited infinitely many times, otherwise it is  of finite length. The set of states that are visited infinitely many times depends on the path $\omega$, so does the length of the row $(S_{x,n})_n$. To avoid rows of finite length, \cite{fortini2002} enlarge the state space, by adding an external point $\partial$,
and define $S_{x,n}(\omega)=\partial$ if $\tau_n(x)(\omega)=\infty$. 

It is proved in \cite{fortini2002} that $(X_n)_{n\geq 0}$ 
 is a mixture of recurrent Markov chains if and only if the 
 array of successor states $[S_{x,n}]_{x\in\mathbb X, n\geq 1}$ 
is partially exchangeable by rows in the sense of de Finetti,   
(see Theorem 1 in \cite{fortini2002} for more details and for the extension to uncountable state spaces). 
 This  allows us to use the results in Section \ref{sec:partial} for the array $[S_{x,n}]_{x\in\mathbb X,n\geq 1}$ of successor states. For each $x$, the $x$th row $(S_{x,n})_{n \geq 1}$ is exchangeable, thus the successors $S_{x,n}$ of state $x$ are conditionally i.i.d. given a random probability mass function $\tilde{Q}_x$ on $\mathbb X$. 
 The probability masses $\tilde Q_{x,i} \equiv \tilde{Q}_x(i)$ are the limits of the empirical frequencies $\sum_{k=1}^n \delta_{S_{x,k}}(i)/n$, $i \in \mathbb X$, and correspond to the $x$th row of the random transition matrix $\tilde{Q}$. Partial exchangeability also implies that the rows of the array of successor states are not independent sequences: 
probabilistic dependence across them is introduced through the joint prior law of the vector $(\tilde{Q}_x, x \in \mathbb X)$, i.e. 
of the rows of the random transition matrix $\tilde{Q}$.  

 Moreover, the random transition  matrix is the limit of the predictive distributions, in the sense that, for all $x,i$, 
 \begin{align*}
& \lim_n \PP(S_{x, n+1}=i \mid S_{x, 1}, \ldots, S_{x, n}, V) \\
& = \lim_n \PP(S_{x, n+1}=i \mid S_{x, 1}, \ldots, S_{x, n}) 
 = \tilde{Q}_{x, i}
\end{align*}
where $V$ collects all the rows of the matrix of successors states but the $x$th.
This result refers to the successor states. In terms of the sequence $(X_n)_{n\geq 0}$, see Theorem 1 in \cite{fortini2017}. 

\medskip
Stochastic processes with reinforcement are again powerful tools %play again an important role 
in predictive constructions of Markov exchangeable sequences. An elegant construction, through  {\em edge reinforced random walks}  on a graph, is Diaconis and Rolles'  \citep{diaconisRolles2006} characterization of a conjugate prior for the transition matrix of a reversible Markov chain.  Developments for variable order reversible Markov chains are in \cite{bacallado2011}. 
The reinforced urn schemes in the following examples could also be read in terms of reinforced random walk on a graph (by  associating urns to the vertices).

\begin{example}[{\em Reinforced Hoppe urn processes}] \label{ex:reinforced-Hoppe}
The predictive rule of Example \ref{ex:reinforcedUrn-markov finito} was obtained by \cite{fortini2012hierarchical} through a class of \lq reinforced Hoppe urn processes', that 
includes other constructions in the literature as special cases.  
Let the sample space (or color space) be finite or countable. 
To each $x \in \mathbb{X}$, associate a Hoppe urn $\mathcal{U}_x$, with $\alpha_x$ black balls and discrete  color distribution $q_{x}$ on $\mathbb{X}$. Balls are extracted from each urn by Hoppe sampling as in \cref{ex:polyasequence}, but we now move across urns as follows. Pick $x_0$ from an initial distribution $q$ on $\mathbb{X}$,  set $X_0=x_0$, 
go to urn $U_{x_0}$ and pick a ball from it. Since the ball will be black, 
a color $x_1$ is sampled from $q_{x_0}$ and a ball of color $x_1$ is added in the urn, together with the black ball. Set $X_1=x_1$ and move to Hoppe urn $U_{x_1}$, and proceed simlarly. Let $(X_n)_{n \geq 0}$ be the process so obtained. In this construction, the draws from the state-specific Hoppe urns $\mathcal{U}_x$ represent the successors of state $x$ and are \Polya sequences, independent across $x$; thus, the process $(X_n)_{n \geq 0}$ is Markov exchangeable. Under mild conditions 
it is also recurrent (see \citep{fortini2017} for details).
It follows that a recurrent reinforced Hoppe urn process is conditionally Markov, and the prior on the random transition matrix $\tilde Q$ is such that the rows of $\tilde Q$, regarded as random measures on the state space $\mathbb X$,  are independent, with  
$\tilde Q_x \sim$ DP$(\alpha_x ,q_{x})$ 
(or Dirichlet distributions in the case of a finite state space).

As a special case, with a finite state space $\mathbb X$, suppose that, for each $x \in \mathbb X$, the color distribution $q_x$ has finite support in $\mathbb X$;  then the process $(X_n)_{n\geq 0}$  reduces to the {\em reinforced urn process} by %(RUP; 
\cite{muliereSecchiWalker2000}.  

If $\mathbb X=\{0,1,2, \ldots\}$ and for each $x \in \mathbb X$, the color distribution $q_{x}$
of urn $\mathcal U_x$ has positive masses only on $x+1$ and $x_0=0$, the process $(X_n)$ corresponds to 
the reinforced urn process proposed by \cite{walkerMuliere1997-betaStacy} for Bayesian 
survival analysis. Indeed, for this case,  the exchangeable sequence of the lengths of the $x_0$ blocks characterizes a novel {\em Beta-Stacy} prior can be used as a conjugate prior with exchangeable censored data. A version of this predictive construction also allows a generalization of the finite population Bayesian bootstrap \citep{lo1988},  
to include censored observations \citep{mulierewalker1998}.
$\square$
\end{example}   

A hierarchical version of the reinforced Hoppe urn process gives the popular {\em infinite hidden Markov model} proposed by Beal, Ghahramani and Rasmussen 
\citep{Beal2002} for Bayesian learning in hidden Markov models with an {\em unbounded} number of states.  

\begin{example}[{\em infinite Hidden Markov Model}] \label{eq:iHMM}
Suppose that the state space $(\theta_1^*, \theta_2^*, \ldots)$ %$\mathbb X$ 
is countable and {\em unknown}.
In \cite{Beal2002}, this is the state space of the {\em latent state process} $(X_n)_{n \geq 0}$ 
of a hidden Markov model where a new state may be added as the need occurs. 
The authors construct 
$(X_n)_{n \geq 0}$ through a predictive scheme that 
again envisages a reinforced Hoppe urn process;  but,  
differently from \cref{ex:reinforced-Hoppe}, and also from the construction in \cref{ex:HDP}, here Hoppe's urns are created as a new state (color) is discovered; and colors are drawn when the need occurs from a common \lq oracle' Hoppe urn with an initial number $\gamma$ of black balls and diffuse color distribution $P_0$. 
The process starts by picking a ball from the oracle urn; since the ball will be black, a first color, say $\theta_1^*$, %\st{labelled as $1$ and picked from \red{$P_0$}, say color $\theta_1^*$, is generated; 
is picked from $P_0$; and the black ball and an additional ball of color $\theta_1^*$ are returned in the oracle urn.  Then one sets $X_0=\theta_1^*$ and creates a Hoppe urn $\mathcal{U}_{\theta_1^*}$ with $\alpha$ black balls, picks a ball from it, and proceeds similarly. 
This generates a  process $(X_n)_{n \geq 0}$ that is recurrent and Markov exchangeable; thus, there exist $\tilde Q$ conditionally on which 
$(X_n)_{n \geq 0}$ % \mid \tilde Q$ 
is a Markov chain with transition matrix $\tilde Q$, and the construction characterizes the prior law on $\tilde Q$. 
The draws from the oracle urn generate the states of the process, and are a \Polya sequence with directing random measure  $\tilde P \sim$ DP$(\gamma, P_0)$. 
Conditionally on all the draws $(\theta_1^*, \theta_2^*, \ldots)$ from the oracle urns, thus on $\tilde P=P$, the process $(X_n)_{n \geq 0}$ is a reinforced Hoppe urn process as in \cref{ex:reinforced-Hoppe}, with state space $(\theta_1^*, \theta_2^*, \ldots)$; thus, the rows of $\tilde Q$, regarded as random distributions on the state space $(\theta_1^*, \theta_2^*, \ldots)$, have independent DP$(\alpha, P)$ distributions. 
Therefore, the prior law on the rows of $\tilde Q$, regarded as random distributions,  
is a hierarchical Dirichlet process %prior 
with parameters $(\alpha, \gamma, P_0)$. Here, the construction of the prior is purely predictive; the hierarchical Dirichlet process was introduced later \cite{Teh2006}.

The predictive distribution of $X_{n+1}$ given $x_{0:n}$ is analytically complex; 
however, the predictive construction above can be exploited to design computational methods,  see e.g. \citep{Gael_Ghahramani_2011}. 
$\square$
\end{example}

\subsection{Row-column exchangeability} \label{sec:RCE}

Many data are in the form of arrays, graphs, matrices,  and forms of partial exchangeability are developed for general random structures.  In this section we 
 briefly review Aldous' notion of {\em row-column exchangeability}, or partial exchangeability for random arrays, and  refer to Aldous \citep{aldous1985} and Kallenberg \citep{kallenberg2005} for extensive treatment. An excellent review paper that also includes Bayesian models for exchangeable random structures in statistics and machine learning is \cite{orbanzRoy2015}. 
We do not even try to review the wide and growing literature on row-column exchangeable 
arrays,  and related theory of exchangeable random graphs and more recent theory for sparse graphs. We just recall basic concepts and the analogue of de Finetti representation theorem for row-column exchangeable arrays. In the predictive perspective of this paper, it would be interesting to include 
basic  properties of the predictive distributions, in analogy to what we have in 
Proposition \ref{prop:predconv} for exchangeable sequences. However, to the best of our knowledge, results of this nature for row-column exchangeable arrays are lacking. 
A first result,
that once more relates the problem with de Finetti's concept of partial exchangeability and holds for a fairly general class of row-column exchangeable arrays, is given in unpublished work  by \cite{augusto}. 

\medskip

In studying exchangeability, we have regarded the data $(X_1, \ldots, X_n)$ 
as elements of an infinite sequence $(X_n)_{n \geq 1}$. 
Similarly, here we consider an observed finite array $[X_{i,j}]_{i,j=1, \ldots, n}$ as a sub-array of an infinite random array $X = [X_{i,j}]_{i,j \geq 1}$; the $X_{i,j}$ are $\mathbb X$-valued random variables, where 
$\mathbb X$ is a Polish space.

\begin{definition}
An infinite random  
array $X=[X_{i,j}]_{i,j \geq 1}$ is {\em separately exchangeable} if
\begin{equation}\label{eq:RCE}
X \overset{d}= [X_{\sigma_1(i), \sigma_2(j)}]_{i,j \geq 1}
\end{equation} 
for all finite permutations $\sigma_1, \sigma_2$ of $\mathbb N$. It is {\em jointly exchangeable} if the above holds in the special case $\sigma_1=\sigma_2$. 
\end{definition}
Condition  (\ref{eq:RCE}) is equivalent to requiring that the rows of $X$ are exchangeable and the columns are exchangeable; it is thus referred to as {\em row-and-column exchangeability} (RCE). We will use the terminolgy {\em RCE array}
to mean that the array is either separately or jointly exchangeable. 
Separate exchangeability is an appropriate assumption if rows and columns of the array correspond with two distinct sets of entities; for example, rows correspond to users and columns to movies. If there is a single set of entities, for example the vertices of a graph, one may require invariance under permutations of the entities, that is, joint exchangeability.

Indeed, binary jointly exchangeable arrays give a representation of exchangeable random graphs.  
A random infinite graph (with known vertices, labeled by $\mathbb N$, and random edges)
is exchangeable if its probability law is invariant under every finite permutation of its vertices. Equivalently, if and only if  the corresponding adjacency matrix $X=[X_{i,j}]_{i,j \geq 1}$, where $X_{i,j}$ is the indicator of there being an edge $(i,j)$ in the graph, is jointly exchangeable.  
Actually, theoretical results for RCE  
arrays have been rediscovered in the developments of 
the limiting theory for large graphs initiated by Lov\'asz and Szegedy \cite{LovaszSzegedy2006}. The connection between graph limits and RCE 
arrays is given by Diaconis and Janson \citep{diaconisJanson}. We refer to the monograph by Lov\'asz \cite{Lovasz2013-book} for the graph limit theory.

Proving a de Finetti-like representation for RCE arrays has been more delicate than  expected.
The  representation theorem was independently given by Hoover \cite{Hoover1979} and Aldous \cite{aldous1981} and developed more systematically by Kallenberg, %(1989+) 
culminating in his 2005 monograph \cite{kallenberg2005}. 
The proof that appears in Aldous (\cite{aldous1981}; see also \cite{aldous1985}, Theorem 14.11) 
uses the concept of 
\lq coding'. 
The way this is used 
may be unfamiliar for some readers; to introduce it, note that de Finetti's representation theorem can be given (e.g. \cite{aldous1985}, page 129) as follows.  A sequence of r.v.'s  $(X_n)_{n\geq 1}$ is exchangeable if and only if there exists a measurable 
function $H : [0,1]^2 \rightarrow \mathbb X$ such that $(X_n)_{n\geq 1}$ can be {\em coded}  through i.i.d. uniform r.v.'s  $U, U_i, i \geq 1$ with a {\em representing function} $H$, %$h$, 
that is,   $(X_n)_{n\geq 1} \overset{d}=( H(U_n, U))_{n\geq 1}$.
For example, binary r.v.'s $(X_n)_{n\geq 1}$ are exchangeable if and only if they can be generated by first picking $\theta$ %$\ttheta$ 
from a prior law $\pi$ (through $\theta=\pi^{-1}(U)$, where $\pi^{-1}$ is the generalized inverse of the prior distribution function $\pi$), then 
sampling  
$X_i \iidsim$ Bernoulli$(\theta)$ (through $X_i=\mathbf 1_{(U_i\leq\theta)}$).

\begin{theorem}[Aldous-Hoover representation theorem for separately exchangeable arrays] \label{th:aldousHoover}
An infinite random array $X=[X_{i,j}]_{i,j \geq 1}$ is separately exchangeable 
if and only if there exists $H : [0,1]^4 \rightarrow \mathbb X$
such that $X$  can be coded by i.i.d. Uniform$(0,1)$  independent r.v.'s 
$U; U_i, i \geq 1; V_j, j \geq 1, U_{i,j}, i,j \geq 1$, with representing function $H$, that is 
$$[X_{i,j}]_{i,j \geq 1} \overset{d}{=} [X_{i,j}^*]_{i,j \geq 1 },
 \mbox{where} 
\; X^*_{i,j} = H(U, U_i, V_j, U_{i,j}). $$
\end{theorem}
The natural statistical interpretation is that $X_{i,j}$ is determined by a row effect $U_i$, a column effect $V_j$, an individual effect $U_{i,j}$ and an overall effect $U$.  

{\em Binary arrays}. To simplify, let us consider binary arrays. The representation theorem can be rephrased by  
saying that an infinite binary random array $X$ is separately exchangeable if and only if there exists a probability measure $\pi$ on the space of (measurable) functions from $[0,1]^2 \rightarrow [0,1]$ such that $X$ can be generated as follows (the r.v.'s $U_i, V_i, U_{i,j}$ are as in the theorem). Each row $i$ 
is assigned a latent feature $U_i$ and each column $j$ is assigned a feature $V_j$.  
Independently generate a function $W(\cdot, \cdot)$ from the probability distribution $\pi$ 
(through the uniform r.v. $U$). 
Given the features assignment  and $W$,  set $X_{i, j}=1$ with probability $W(U_i, V_j)$ (that is, $X_{i,j}=1$ if $U_{i,j} \leq W(U_i, V_j)$).
 Note that if  $W$ is fixed (not picked from $\pi$), the resulting array $[X_{i,j}]_{i,j \geq 1}$ is separately exchangeable by the symmetry of the construction. Denote by $P_W$ 
its probability law. Aldous-Hoover representation theorem proves that any separately exchangeable binary array is a mixture of such arrays. 

\begin{theorem}[Aldous-Hoover; binary arrays] \label{th:aldousHoover-binary}
Let $X=[X_{i,j}]_{i, j \geq 1}$ be an infinite separately 
exchangeable binary random array. Then, there is a probability distribution $\pi$ such that 
\begin{equation}    \label{eq:aldous-jointPW}
    \PP( X \in \cdot) = \int P_W( \cdot) d\pi(W). 
\end{equation}
\end{theorem}
Borrowing from the language of random graphs, a (measurable) map $W:[0,1]^2 \rightarrow [0,1]$ is called a {\em graphon}. A graphon defines a probability law $P_W$ as above, however this parametrization is not 
unique; in other words, 
in statistical sense, $W$ is not identifiable.
Indeed, if $W'$ is obtained from $W$ by a measure-preserving transformation %change 
of each variable, then clearly the associated process $[X'_{i,j}]_{i,j\geq 1}$ has the same joint distribution as $[X_{i,j}]_{i,j\geq 1}$. It has been proved that this is the only source of non-uniqueness \citep{kallenberg2005}. A unique parametrization can be obtained by substituting the graphons $W$ by equivalence classes. The results by Orbanz and Szegedy \cite{orbanzSzegedy2016} imply that this parametrization is measurable. See \cite{diaconisJanson} and \cite{orbanzRoy2015}  for a more extensive treatement. 

\medskip   
For {\em jointly} exchangeable arrays, there is an analogous representation result as Theorem \ref{th:aldousHoover},  with  
$X^*_{i,j} = H(U, U_i, U_j,U_{\{i,j\}})$, where $(U_i)_{i \geq 1}$ and $[ U_{\{i,j\}}]_{i,j\geq 1}$ are, respectively, a sequence and an array of independent uniform r.v.'s and $H$ is symmetric in $(U_i,U_j)$; see \citep{aldous1985}, Theorem 14.21. 
Note that the indexes of the $U_{\{i,j\}}$ are unordered and 
the array $[U_{\{i,j\}}]$ may be thought of as an upper-triangular matrix with i.i.d. uniform entries.

We give a version of the representation theorem for {\em binary} jointly exchangeable arrays; in particular, this applies to binary arrays representing the adjacency matrix of an infinite simple graph (undirected and with no multiple edges and self-loops). In this latter case, the adjacency matrix $[X_{i,j}]_{i,j\geq 1}$ is symmetric with a zero diagonal. 
A binary jointly exchangeable array can be constructed in a similar way as before, by now assigning features to vertices. Namely, each vertex $i \in \mathbb N$ is assigned a latent feature $U_i$, with $U_i \iidsim$ Uniform$(0,1)$; given the latent features and a graphon $W$, we  set $X_{i,j}=1$  with probability $W(U_i, U_j)$, independently for all $i,j$. The array $[X_{i,j}]_{i,j\geq 1}$ so constructed is {\em jointly} exchangeable by construction (and is symmetric if $W$ is such). Denote by $P^{(joint)}_W$ its probability law. The Aldous-Hoover representation theorem shows that any binary jointly exchangeable array can be constructed as a mixture of these $P^{(joint)}_W$. 
In other words, if $[X_{i,j}]_{i,j\geq 1}$ is an infinite jointly exchangeable binary array, then conditionally on the features and on the graphon, the $X_{i,j}$ are independent Bernoulli($\ttheta_{i,j}$) where $\ttheta_{i,j}=W(U_i, U_j)$. \\

As given, the Aldous-Hoover  theorem  
does not provide an empirical link for the elements of the representation. For exchangeable sequences, de Finetti's representation theorem is
complemented by a law of large numbers,  that gives an empirical meaning to the random directing measure $\tF$ as the limit of the sequence of empirical distributions; moreover, $\tF$ is also the limit of the 
predictive distributions $P_n$ (see Proposition \ref{prop:predconv}). 
For RCE  arrays, the notion of an empirical distribution and a law of large numbers  are given by Kallenberg \cite{kalleberg1999}, Theorem 3. 
The asymptotic theory is also thoroughly explained in  \citep{orbanzRoy2015}. 
Instead, no result seems available on convergence of predictive distributions, that relate the Aldous-Hoover representation to prediction. To our knowledge, 
a first result is given in \cite{augusto};  but we do not expand this further here. 

\section{Recursive algorithms and predictions}\label{sec:algorithms}

The predictive approach has been shown to be powerful in many contexts. 
A last but important (to us) point we want to make in this paper is that a Bayesian predictive approach can also be taken in less \lq classic' contexts, in particular to evaluate predictive algorithms, possibly arising from other fields, in order to obtain better awareness of their implicit assumptions and provide probabilistic quantification of uncertainty.  
We discuss this point 
for two recursive procedures. The first example is from \cite{fortini2020-newton}; the second one is new.

Recursive computations are particularly convenient in sequential learning from streaming data, where it is crucial to have predictions that can be quickly updated as new observations become available, at a constant computational cost and with limited storage of information; and  recursive procedures 
have been developed
since at least the work of Kalman \cite{kalman1960}.
Recent directions in a Bayesian predictive approach include, among others, 
\cite{hahnMartinWalker2018}, %\cite{fortini2020-newton}, 
\cite{fong2021} and \cite{rigo2023-StatSinica}.  
In sections \ref{sec:newton} and \ref{sec:gradient} below,  
we examine two recursive algorithms for prediction with streaming data;  
and, in line with the principles at the basis of this paper, exposed in the Introduction,  
we show how they can be read as Bayesian predictive learning rules (although not exchangeable), unveiling the implied statistical model and obtaining Bayesian uncertainty quantification. 
In the examples, the implied model is asymptotically exchangeable, thus for $n$ large the \lq algorithm' provides a computationally simple approximation of an exchangeable Bayesian procedure. 

In some more detail, we can read the algorithms as particular cases of a broad class of Bayesian recursive predictive rules of the following form: $X_1 \sim P_0$ 
and for every $n\geq 1$ $X_{n+1} \mid X_1,\dots, X_n %X_{N} 
\sim P_n$, with
\begin{equation} \label{eq:recursivePred} 
\left\{
\begin{array}{l}
P_n = p_n(T_n), \\
T_n=h_n(T_{n-1},X_n),
\end{array}
\right.
\end{equation}
where  
$p_n$ and $h_n$ are given functions, and $T_n$ is a predictive sufficient summary 
(Sect. \ref{subsec:sufficiency}) of $X_1,\dots,X_n$. The form of $P_n$ allows storage of only the sufficient summaries and straightforward updating. Suitable specifications  lead to  desirable properties for the sequence $(X_n)_{n\geq 1}$ (in the examples, asymptotic exchangeability). 
 
In an exchangeable parametric setting, many common models, for example the Beta-Bernoulli scheme, have a recursive rule of the form \eqref{eq:recursivePred}. In a nonparametric setting, this holds %is the case 
for \Polya sequences, whose predictive rule (\ref{eq:predDP}) can be written recursively as 
$$P_n = \frac{\alpha+n-1}{\alpha+n} P_{{n-1}} + \frac{1}{\alpha+n} \delta_{{X_n}}. $$
In this case $T_n\equiv P_n$,  and the recursive rule applies directly to the predictive distributions. This extends to other discrete nonparametric schemes; it is however more de\-li\-ca\-te in the continuous case. 
Here, a class of sequences $(X_n)_{n \geq 1}$ that satisfy (\ref{eq:recursivePred}) are 
{\em measure-valued  \Polya sequences} (MVPS; \citep{SarievSavov2024}), characterized by 
$$
P_n(\cdot)= \frac{\gamma P_0(\cdot) + \sum_{i=1}^n R_{X_i}(\cdot)}{\gamma + \sum_{i=1}^n R_{X_i}(\mathbb{X})}
$$
where $R$ is a non-null finite 
transition kernel on the sample space $\mathbb X$ and $\gamma$ is a positive constant.
Letting $\mu_0(\cdot)=\gamma P_0(\cdot)$, we can write the predictive distributions, for $n\geq 1$,  as in \eqref{eq:recursivePred}, with 
$$
\left\{
\begin{array}{l}
P_n(\cdot)=\dfrac{\mu_n(\cdot)}{\mu_n(\mathbb X)}\\
\mu_n(\cdot)=\mu_{n-1}(\cdot)+R_{X_n}(\cdot).
\end{array}
\right.
$$
The predictive sufficient statistic $T_n$ in \eqref{eq:recursivePred} is, in this case, the random measure
$\mu_n$, which is updated %at each step $n$, 
by simply adding the random measure $R_{X_n}$ to $\mu_{n-1}$. This scheme %It thus 
extends the Hoppe's urn characterization of \Polya sequences shown in \cref{ex:polyasequence}:  
any set of colors $B$ in $\mathbb X$ has initially mass  $\mu_0(B)$;
then, at each step $n$, the mass of $B$   
is reinforced with a %(generally random) 
mass $R_{x_n}(B)$.
As proved in \cite{SarievSavov2024}, a measure-valued \Polya sequence $(X_n)_{n\geq 1}$ is exchangeable if and only if it coincides with a kernel-based Dirichlet sequence; unfortunately, as seen in \cref{ex:pred-continuous}, the latter seems quite limited for statistical applications; and so are {\em exchangeable} specifications of MVPS.
Moreover, natural extensions, for example allowing for random reinforcement (see e.g. \cite{fortiniPetSariev2021-regazzini}, \cite{sarievFortiniPetrone2023-dominant}) also require to go beyond  exchangeability. Indeed, MVPS can be  {\em asymptotically} exchangeable. The procedure we consider in the next section \ref{sec:newton} will be shown to be an asymptotically exchangeable generalized MVPS.

{\em Remark.} 
As seen in \cref{sec:exch},  asymptotic exchangeability holds for c.i.d. sequences. 
A class of recursive predictive rules that, 
under mild assumptions,  meets the c.i.d. condition, is presented 
in \cite{rigo2023}, (Sect. 4.1, Eqn (5)). 
Although this class is rather general, not all the predictive rules of the form (\ref{eq:recursivePred}) - in particular, not those arising in the following sections - are included in it. We need the generality and the predictive features of the class \eqref{eq:recursivePred}.

\subsection{Newton's algorithm  and recursive prediction in mixture models }
\label{sec:newton}
Michael Newton and collaborators (\cite{newton1998}, \cite{newtonZhang1999}, \cite{newton2002}) proposed a recursive procedure for  unsupervised sequential learning in mixture models, that extends an earlier proposal by Smith and Makov \citep{smithMakov1978}
and is referred to as the  {\em Newton's algorithm} in the Bayesian nonparametric literature. 
Let $(X_n)_{n\geq 1}$ 
be a sequence of r.v.'s taking values in ${\mathbb X} \subseteq {\mathbb R}^d$, 
and consider a mixture model 
$$
X_i\mid  G \stackrel{i.i.d.}{\sim}f_{G}(x)\equiv \int k(x\mid\theta)d G(\theta),
$$
where $k(x\mid\theta)$, $\theta\in\Theta\subseteq \mathbb R^k$, is a kernel density of known parametric form,
and $G$ is the unknown mixing distribution. Let us assume that the %Typically, the 
mixture model is 
identifiable. The Newton's algorithm estimates $G$ 
starting from an initial guess $G_0$ and %then 
recursively updating the estimate, as $x_1, x_2, \ldots$ become available, as 
\begin{equation} \label{eq:newton}
G_n(\cdot) = (1-\alpha_n) G_{n-1}(\cdot) + \alpha_n G_{n-1}(\cdot\mid x_n), 
\end{equation} 
where $\alpha_n$ and $G_{n-1}(\cdot \mid x_n)$ are as described in \cref{ex:pred-continuous} of Section \ref{sec:methodsConstruction}, and a simple choice for $\alpha_n$ is $\alpha_n=1/(\alpha+n)$ for some $\alpha>0$.
At step $n$, the algorithm returns $G_n(\cdot)$ as the estimate of $G(\cdot)$. 

This recursive procedure was suggested as a simple and computationally fast approximation of the intractable Bayesian solution in a Dirichlet process mixture model. In the latter,  
the mixing distribution is random and is assigned a DP$(\alpha G_0)$ prior; then the prior guess is 
$E(\tilde{G}(\cdot))=G_0(\cdot)$,  
and the first update, based on $x_1$, gives the Bayesian estimate 
$$ E(\tG(\cdot ) \mid x_1) = \frac{\alpha}{\alpha+1}  G_0(\cdot ) + \frac{1}{\alpha+1} G_0(\cdot\mid x_1). 
$$
Newton's algorithm (\ref{eq:newton}) replicates %uses 
the same updating form for any $n > 1$. The resulting estimate $G_n$ %This 
deviates from the Bayesian solution $E(\tG(\cdot ) \mid x_{1:n})$,   
but is computationally much simpler; and, in practice, may give a surprisingly good approximation.
Based on $G_n$,
one can also obtain a plug-in estimate of the mixture density $f_G$, as  $f_{G_n}(x)=\int k(x \mid \theta) d G_n(\theta)$. Again, this differs from  %Also, Newton's density estimate $f_{G_n}$ deviates 
 the Bayesian density estimate $E(f_G(x) \mid x_{1:n})$ 
in the Dirichlet process mixture model, 
where $E(f_G(x) \mid x_{1:n})$ is also the predictive density of $X_{n+1}$ given $x_{1:n}$.   
Our point is thus that 
Newton's algorithm is 
using a different learning rule, namely the predictive density 
\begin{equation} \label{eq:pred-newton}
X_{n+1} \mid x_{1:n} \sim  f_{G_n}(x)=\int k(x \mid \theta) d G_n(\theta),
\end{equation} 
with $G_n$ as in \eqref{eq:newton}. 
This is of the form (\ref{eq:recursivePred}), with sufficient statistic $T_n=G_n$; and gives %defines 
a generalized measure-valued \Polya sequence.  
Because, as seen in Section \ref{sect:exch-caratterization} and along this paper, the predictive rule characterises the probability law of the process $(X_n)_{n\geq 1}$, reading the algorithm as a {\em probabilistic} predictive rule allows to reveal the probability law %\st{statistical model} 
that the researcher is implicitly %\st{using} 
assuming for the process. 
It is easy to see that \eqref{eq:pred-newton}  characterizes a probability law $\PP$ for $(X_n)_{n\geq 1}$ that is no longer exchangeable. 
However, one still has  
$X_{n+2} \mid x_{1:n} \overset{d}{=} X_{n+1} \mid x_{1:n}$. Thus,  the sequence $(X_n)_{n\geq 1}$ is c.i.d. (see section \ref{sec:asymptotic}), therefore asymptotically exchangeable. 
Actually, \cite{fortini2020-newton} prove stronger results: 
the asymptotic directing random measure of $(X_n)_{n\geq 1}$ has precisely density 
$f_{\tilde G}(x)= \int k(x \mid \theta) d \tG(\theta)$, where, $\PP$-a.s,  the random distribution $\tG$ is the
limit of the sequence $G_n$, and $f_{\tG}$  is the limit in $L^1$ of  the predictive density  $f_{G_n}(x)$. 

The above results imply that for $n\geq N$ large %all $n \geq N$ with 
$$X_n \mid \tG \overset{i.i.d.}{\approx} f_{\tG}, 
$$
with a novel prior on the random mixing distribution 
$\tG$, 
that, interestingly, can 
select absolutely continuous distributions  
\citep{fortini2020-newton}; but is not known explicitly. However, one can sample from it, through the \lq sampling from the future' algorithm described in Section \ref{sec:predictive-based appr}. 
Moreover, in the same spirit as in Proposition \ref{prop:credible}, but referring to the mixing distribution,  
one can obtain an asymptotic Gaussian approximation of the posterior distribution  of $[\tilde G(t_1), \ldots, \tilde G(t_M)]$ given $x_{1:n}$.   
Our predictive methodology also allows to naturally obtain principled extensions, that otherwise would mostly be heuristic; see again \cite{fortini2020-newton}.

\subsection{Online gradient descent and prediction}\label{sec:gradient}

Consider the problem of classifying items as \lq type $0$' or \lq type $1$' based on a $d$-dimensional vector of features, for example through a neural network or a generalized linear model. Let the items arrive sequentially, and, for every $n\geq 1$, let $Y_n$ and $X_n$ represent the \lq type' and features of the $n$th item, respectively. Typically, the relationship between $X_n$ and $Y_n$ is modelled through $\PP(Y_n=1\mid x_n)=g(x_n,\beta)$,  where $g$ is a known function and $\beta$ is an unknown $d$-dimensional parameter, and the $X_i$ are assumed to be i.i.d. from a distribution (known or unknown) $P_X$. 
Given a sample, or \lq training set', $(x_i,y_i)_{i=1,\dots,n}$, an estimate of the parameter $\beta$ can be obtained by  minimizing, with respect to $\beta$, a loss function $L(\beta;x_1,y_1,\dots,x_n,y_n)$ measuring the difference between the actual values of $y_1,\dots,y_n$ and the ones predicted by the model.
While efficient algorithms exist to solve this optimization problem, the computational cost becomes substantial when $\beta$ is high-dimensional. Additionally, 
if data arrive sequentially, the process must be restarted from scratch with each new data point.
In this context,  $\beta$ can be estimated by an {\em online learning} \citep{smale2006} procedure, based on the stochastic approximation \citep{robbins1951} of the gradient descent dynamic: $\beta$ is initialised at time zero as $\beta_0$ (which can be random or deterministic), and then updated, at each new data $(x_n,y_n)$,  by \lq\lq moving'' it
along the direction that minimizes $L(\beta_{n-1};x_n,y_n)$, 
(that is opposite to the direction of the gradient with respect to $\beta_{n-1}$):
\begin{equation}\label{eq:gradientDescentgen}
\beta_n=\beta_{n-1}-\frac 1 n \nabla_{\beta} L(\beta_{n-1};x_n,y_n).
\end{equation}
Although the results of this section hold for other choices of $L$ (for example quadratic loss) and $g$, here we consider the typical case of binary cross entropy loss \\
$L(\beta;x_1,y_1,\dots,x_n,y_n) = 
-\sum_{i=1}^n [y_i\log_2(g(x_i,\beta))+(1-y_i)\log_2(1-g(x_i,\beta)) ]$
 and logistic function 
\begin{equation}\label{eq:logistic}
g(x,\beta)=\frac{e^{x^T\beta}}{1+e^{x^T\beta}}.
\end{equation}
In this case, \eqref{eq:gradientDescentgen} becomes
\begin{equation}\label{eq:gradientDescent}
\beta_n=\beta_{n-1}+\frac{1}{ n\log 2} (y_n-g(x_n,\beta_{n-1})) x_n.
\end{equation}

If $P_X$ is known, we can reinterpret the algorithm as a Bayesian predictive learning rule of the form 
\eqref{eq:recursivePred}, where
$T_n=\beta_n$ is updated at each new observation $(x_n,y_n)$ as in \eqref{eq:gradientDescent}, 
and we assume that, for $y=0,1$,
\begin{align}
\label{eq:gradient}
&\PP(X_{n+1}\in dx,Y_{n+1}=y|x_{1:n},y_{1:n})\\
&=g(x,\beta_n)^y(1-g(x,\beta_n))^{1-y}P_X(dx),\nonumber
\end{align}
with $g$ as in \eqref{eq:logistic}. In fact, the assumption that $P_X$ is known (or has been estimated separately) is only instrumental for the theoretical results; our final result does not require to know $P_X$.

This predictive rule is not consistent with exchangeability of the sequence $((X_n,Y_n))_{n\geq 1}$;  
however, under mild assumptions, exchangeability 
holds asymptotically, as shown in the following proposition. All the proofs are
in Section A5 % \ref{sec:proof5} 
of the Supplement \citep{supplement}.
\begin{proposition}
\label{prop:gradient}
       Let $( (X_n, Y_n) )_{n \geq 1}$ have probability law $\PP$ characterized by the predictive rule  
       \eqref{eq:gradientDescent}-\eqref{eq:gradient}, where $g$ is given by \eqref{eq:logistic},
      $E(||\beta_0||^2)<\infty$, and 
           $P_X$ has bounded support.        
       Then:
       \begin{itemize}
          \item[{\rm i)}] The sequence of random vectors 
       $(\beta_n)_{n\geq 0}$ converges  $\PP$-a.s.
        to a random limit $\tilde \beta$ and, for every $n\geq 0$, $\beta_n=E(\tilde\beta\mid \beta_0,X_1,Y_1,\dots,X_n,Y_n)$;
    \item[{\rm ii)}]
   The sequence of random vectors  $((X_n,Y_n))_{n\geq 1}$ is $\tilde{P}_{X,Y}$-asymptotically exchangeable, with the random measure  %distribution 
   $\tilde P_{X,Y}$ such that the conditional distribution $\tilde{P}_{Y|X=x}$ 
    of $\; Y$ given $X=x$ is Bernoulli($g(x,\tilde\beta$)). 
                 \end{itemize}            
    \end{proposition}
Informally, this implies that, for $n$ large, $$Y_n \mid \tilde\beta, x_n \overset{indep}{\approx}  \mbox{Bernoulli}(g(x_n,\tilde\beta)). $$
    
The posterior distribution of the random vector $\tilde\beta$ remains unknown. However, for $n$ large, 
it can be approximated by a multivariate Normal distribution centered in $\beta_n$.
\begin{proposition}\label{prop:gradient2}
     Under the assumptions of 
     Proposition \ref{prop:gradient}, with $P_X$ being non-degenerate on any linear subspace of $\mathbb R^d$,
     the conditional distribution of $\sqrt n(\tilde \beta-\beta_{n})$, given $\beta_0,X_1,Y_1,\dots,X_n,Y_n$, converges $\PP$-a.s., as $n \rightarrow \infty$,
     to a multivariate Normal distribution with mean zero  and random covariance matrix  %$U$ defined as 
\begin{align}\label{eq:U}
   U=(\log 2)^{-2} \int \!\!x x^T g(x, \tilde \beta)(1\!-\!g(x, \tilde \beta))P_X(dx). 
\end{align}
%\blue{$E(X X^T g(X, \tilde \beta)(1-g(X, \tilde \beta))$?}
\end{proposition}
The random matrix $U$, that depends on the unknown parameter $\tilde\beta$, can be approximated by replacing $\tilde \beta$ with $\beta_n$. Thus, for $n$ large
$ \tilde\beta \mid x_{1:n}, y_{1:n} \approx \Norm_{d}(\beta_n, U_n/n)$, 
with $U_n=(\log 2)^{-2}\int x x^T g(x,\beta_n)(1-g(x,\beta_n))dP_X(x)$. 

The following alternative approximation of $U$ does not require to know $P_X$.

\begin{proposition}\label{prop:gradient3}
   Under the assumptions of Proposition \ref{prop:gradient2}, as $n\rightarrow\infty$,
   \begin{itemize}
       \item[{\rm i)}] The statistic 
       $ V_n=\frac 1 n \sum_{k=1}^n k^2(\beta_k-\beta_{k-1})(\beta_k-\beta_{k-1})^T $
converges $\PP$-a.s. to the random matrix $U$ in \eqref{eq:U};
\item[{\rm ii)}]  The conditional distribution of $
\sqrt n V_n^{-1/2}( \tilde\beta-\beta_n)$, given $\beta_0,X_1,Y_1,\dots,X_n,Y_n$ converges $\PP$-a.s. to the standard multivariate Normal distribution.
   \end{itemize}
\end{proposition}
Thus, for $n$ large, for $\PP$-almost all sample paths,  
$$ \tilde\beta \mid x_{1:n}, y_{1:n} \approx \Norm_d(\beta_n, V_n/n), $$
which can be used, in particular, to provide asymptotic credible sets.

\vspace{2mm}
{\em Remark.} 
The proofs of Propositions \ref{prop:gradient}, \ref{prop:gradient2} and \ref{prop:gradient3} are based on a key martingale property of the sequence $(\beta_n)_{n\geq 0}$. These results can be generalized to other algorithms as long as the martingale property holds and certain moment bounds are met. Although our techniques are not directly applicable without the martingale property, 
extending the Bayesian interpretation beyond martingale-based learning appears feasible, since  many algorithms are based on stochastic approximations with well-understood limit theorems and convergence rates.  Also, computational strategies  
such as Approximate Bayesian Computation or Variational Bayes might be read as using a predictive learning rule whose properties could  be studied in our predictive approach. 

\section{Final remarks}\label{sec:conclusions}
We have offered a review, from foundations to some recent directions, of principles and methods for Bayesian predictive modeling; 
and of course a lot could not be covered. We barely mentioned that prediction is not, in fact, the ultimate goal, but the basis for decisions to be taken under risk. Also, the paper is, somehow unavoidably, mostly theoretical, aiming at discussing fundamental concepts; but a predictive approach involves our perspective 
in inference and in 
any statistical problem, with evident practical implications; ultimately, the basic principle is that, differently from inferential conclusions, predictions  can be checked with facts. 

\begin{acks}[Acknowledgments]
We thank the three reviewers for their valuable comments, and are grateful to Sara Wade for interesting suggestions.  
Both authors acknowledge funding by the European Union grant \lq Next Generation EU Funds, PRIN 2022 (2022CLTYP4)\rq. 
\end{acks}

\begin{supplement}
\stitle{Supplement}
\sdescription{The Supplement collects  the proofs for the results in the paper.}
\end{supplement}

\bibliographystyle{imsart-number} 
\bibliography{statsc}

\end{document}

% --- supplement: ArxivreviewFinalSupp.tex ---

\begin{frontmatter}

\title{Supplement to \lq\lq Exchangeability, prediction and predictive modeling in Bayesian statistics''}

\author{Sandra Fortini and Sonia Petrone}
\end{frontmatter}

\runtitle{Prediction and exchangeability}

\setcounter{equation}{0}
\setcounter{figure}{0}
\setcounter{table}{0}
\setcounter{page}{1}
\setcounter{section}{0}
\makeatletter
\renewcommand{\theequation}{A\arabic{equation}}
\renewcommand{\thesection}{A\arabic{section}}

\section{Preliminary results}\label{sec:proof1}
We first recall two results in the literature that will be used in the subsequent arguments. The first result \citep{crimaldi2009} is an {\em almost sure convergence of conditional distributions} result for martingales; the notation has been adapted to fit with this paper.

\begin{theorem}[Theorem 2.2 in \citep{crimaldi2009}]\label{th:crimaldi}
On $(\Omega,\mathcal F,\mathbb P)$, let $(M_n)_{n\geq 0}$ be a real martingale with respect to the filtration $\mathcal G=(\mathcal G_n)_{n\geq 0}$. Suppose that $(M_n)_{n\geq 0}$ converges in $L^1$ to a random variable $M$. Moreover, setting, for $n\geq 1$,
$$
U_{n}\equiv n\sum_{m\geq n}(M_m-M_{m-1})^2,\, Y\equiv \sup_n\sqrt n |M_n-M_{n-1}|,
$$
assume that the following conditions hold:
\begin{itemize}
\item[{\rm i)}] The random variable $Y$ is integrable.
\item[{\rm ii)}] The sequence $(U_{n})_{n\geq 1}$ converges $\PP$-a.s. to a positive random variable $U$.
\end{itemize}
Then,  for every $z\in\mathbb R$,
$$
\PP(\sqrt n (M_n-M)\leq z\mid \mathcal G_n)\stackrel{a.s.}{\rightarrow}\Phi(U^{-1/2}z),
$$
as $n\rightarrow\infty$, where $\Phi$ denotes the standard normal cumulative distribution function.
\end{theorem}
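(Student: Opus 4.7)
The natural strategy is to prove almost sure convergence of the conditional characteristic function
\[
\phi_n(t) := \mathbb{E}\!\left[\exp\!\bigl(it\sqrt{n}(M-M_n)\bigr)\,\Big|\,\mathcal{G}_n\right]
\]
to the random Gaussian characteristic function $\exp(-t^2 U/2)$, for every $t\in\mathbb{R}$. Granting this, a conditional version of L\'evy's continuity theorem closes the argument: because $U>0$ a.s., the limit law $\Phi(U^{-1/2}\,\cdot\,)$ is a.s.~continuous on $\mathbb{R}$, so pointwise a.s.~convergence of the CFs of the regular conditional distributions (simultaneously for $t$ in a countable dense set, on a single event of full measure) yields a.s.~weak convergence of those conditional laws, hence the claimed pointwise convergence at every $z\in\mathbb{R}$.

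To analyse $\phi_n(t)$, I would decompose $M-M_n = \sum_{m>n}\Delta M_m$ with $\Delta M_m := M_m - M_{m-1}$ (the series converges in $L^1$ since $M_n\to M$ in $L^1$), and iteratively apply the Taylor identity $e^{ix} = 1 + ix - x^2/2 + \rho(x)$, with $|\rho(x)|\le\min(|x|^3, x^2)$, to each increment $\exp(it\sqrt n\,\Delta M_m)$ after conditioning on $\mathcal{G}_{m-1}$; the linear term vanishes by $\mathbb{E}[\Delta M_m\mid \mathcal{G}_{m-1}]=0$. Truncating the sum at $N$, carrying out the expansion for $M_N-M_n$, and then sending $N\to\infty$ (using $L^1$ convergence) yields a representation of the form
\[
\phi_n(t) = \mathbb{E}\!\left[\exp\!\bigl(-\tfrac{t^2}{2}U_n\bigr)\,\Big|\,\mathcal{G}_n\right] + R_n(t),
\]
where $R_n(t)$ gathers the accumulated Taylor remainders together with a cross-term from linearising the exponential.

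The principal term tends to $\exp(-t^2 U/2)$ a.s.: by hypothesis (ii) the integrand converges a.s.~to the $\mathcal{G}_\infty$-measurable limit $\exp(-t^2 U/2)$, and since it is bounded by $1$, conditional dominated convergence combined with L\'evy's upward theorem delivers the a.s.~limit. For the remainder, the Lindeberg-type bound $|\rho(x)|\le\delta x^2 + x^2\mathbf{1}_{\{|x|>\delta\}}$ gives
\[
|R_n(t)| \;\lesssim\; \delta\,t^2\,\mathbb{E}[U_n\mid\mathcal{G}_n] + t^2\,\mathbb{E}\!\left[\sum_{m>n} n(\Delta M_m)^2\,\mathbf{1}_{\{|t|\sqrt n|\Delta M_m|>\delta\}}\,\Big|\,\mathcal{G}_n\right];
\]
the first piece tends to $\delta t^2 U$ and vanishes as $\delta\to 0$, and the Lindeberg piece is controlled using the dominating variable $Y$: integrability of $Y$ supplies the envelope needed to pass the a.s.~limit of the truncated sum (which shrinks to a tail-event contribution) through the conditional expectation.

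The main obstacle is precisely this upgrade from in-probability to a.s.~convergence of the conditional CF. Both hypotheses play specific roles: $Y\in L^1$ provides the $L^1$-envelope required for conditional dominated convergence and for the Lindeberg control of the remainder, while the a.s.~convergence of $U_n$ (rather than merely in probability) is what forces the Gaussian mixing variance $U$ to appear pathwise. The cleanest implementation works on the single event of full measure on which all the a.s.~statements hold simultaneously, and invokes the continuity theorem on that event.
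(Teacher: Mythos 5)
First, a point of reference: the paper does not prove this statement at all --- it is Theorem 2.2 of Crimaldi (2009), recalled verbatim as a tool for the later proofs. So your attempt can only be compared with the strategy of the cited source, not with anything in this supplement.

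Your high-level plan (prove a.s. convergence of the conditional characteristic functions $\phi_n(t)$ to $\exp(-t^2U/2)$ for $t$ in a countable dense set, then invoke a conditional L\'evy continuity argument, using $U>0$ to get convergence of the conditional laws at every $z$) is the right one and matches the source. But the execution has a genuine gap exactly at the step you flag as ``the main obstacle''. The theorem assumes only that $(M_n)$ is an $L^1$-convergent martingale --- no square integrability --- so the additive expansion $e^{ix}=1+ix-\tfrac{x^2}{2}+\rho(x)$, applied increment by increment under $E[\,\cdot\mid\mathcal G_{m-1}]$, forces you to take conditional expectations of $(\Delta M_m)^2$, which need not be finite; the hypotheses only give \emph{pathwise} summability of the squared increments. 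This is precisely why the source works with the multiplicative identity $e^{ix}=(1+ix)\exp(-\tfrac{x^2}{2}+r(x))$, $|r(x)|\le|x|^3$ for $|x|\le 1$: it factors $e^{it\sqrt n(M-M_n)}$ into the martingale product $\prod_{m>n}(1+it\sqrt n\,\Delta M_m)$ (whose conditional expectation given $\mathcal G_n$ is handled by a uniform-integrability argument) times a pathwise-bounded exponential factor that converges a.s.\ to $e^{-t^2U/2}$ by hypothesis ii), with no conditional second moments ever taken. Second, your domination argument does not close: $Y\in L^1$ does not dominate $E[U_n\mid\mathcal G_n]$ nor the Lindeberg sum $\sum_{m>n}n(\Delta M_m)^2\mathbf 1_{\{|t|\sqrt n|\Delta M_m|>\delta\}}$; bounding either through $Y$ would require $Y^2\in L^1$ (and more), which is not assumed. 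In the actual proof, integrability of $Y$ serves to control the uniform integrability of the martingale product (via the maximal increment in $\prod_{m>n}(1+t^2n(\Delta M_m)^2)^{1/2}$), not as an $L^1$ envelope for the quadratic variation. There is also a harmless off-by-one you do not address: $U_n$ sums over $m\ge n$ while $M-M_n$ only involves $m>n$; the discrepancy $n(\Delta M_n)^2\le U_n-\tfrac{n}{n+1}U_{n+1}\to 0$ a.s., but this should be said. Until the interchange of the a.s.\ limit with $E[\,\cdot\mid\mathcal G_n]$ is justified without second moments, your argument remains a heuristic rather than a proof.
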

A careful inspection of the proof of Theorem 2.2 in \citep{crimaldi2009} shows that, if $U_{n}$ converges to a non-negative random variable $U$, then the thesis can be modified as follows: for $\omega$ in a set of probability one
\begin{align*}
\PP(\sqrt n (M_n-M)\leq x\mid \mathcal G_n)(\omega)
\rightarrow
\left\{
\begin{array}{ll}
\Phi(U^{-1/2}x)(\omega)&U(\omega)>0\\
\mathbf 1_{[0,+\infty))}(x)&U(\omega)=0.
\end{array}
\right.
\end{align*}

In applying Theorem \ref{th:crimaldi}, a critical point is proving the convergence of $(U_{n})_{n\geq 1}$. The following result provides sufficient conditions.
\begin{theorem}[Lemma 4.1 in \citep{crimaldi2016}]
\label{lemma:series}
Let $\mathcal G=(\mathcal G_n)_{n\geq 0}$ be a filtration and $(Z_n)_{n\geq 1}$ be a $\mathcal G$-adapted sequence of real random variables such that $E(Z_n|\mathcal G_{n-1})\rightarrow Z$, $\PP$-a.s. for some real random variable $Z$. Moreover, let $(a_n)_{n\geq 1}$ and $(b_n)_{n\geq 1}$ be two positive sequences of strictly positive real numbers such that
$$
b_n\uparrow+\infty,\quad\quad \sum_{n=1}^\infty \dfrac{E(Z_n^2)}{a_n^2b_n^2}<+\infty.
$$
Then we have:
\begin{itemize}
\item[{\rm i)}] If $ \frac{1}{b_n}\sum_{m=1}^n \frac{1}{a_m}\rightarrow\gamma$ for some constant $\gamma$, then 
$$\dfrac{1}{b_n}\sum_{m=1}^n\dfrac{Z_m}{a_m}\stackrel{a.s.}{\rightarrow}\gamma Z.$$
\item[{\rm ii)}] If $b_n\sum_{m\geq n}\dfrac{1}{a_mb_m^2}\rightarrow\gamma$ for some constant $\gamma$, then $$b_n\sum_{m\geq n}\dfrac{Z_m}{a_mb_m^2}\stackrel{a.s}{\rightarrow}\gamma Z.$$
\end{itemize}
\end{theorem}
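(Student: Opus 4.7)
The plan is to reduce both parts to standard Toeplitz/Kronecker arguments by decomposing
\[
Z_m = \xi_m + \eta_m, \qquad \xi_m := Z_m - E(Z_m\mid \mathcal G_{m-1}),\quad \eta_m := E(Z_m\mid \mathcal G_{m-1}),
\]
so $(\xi_m)$ is a $\mathcal G$-martingale difference and $\eta_m\to Z$ $\PP$-a.s. by hypothesis. Using Jensen and the elementary bound $\xi_m^2\leq 2Z_m^2+2\eta_m^2$, the summability hypothesis yields
\[
\sum_m \frac{E(\xi_m^2)}{a_m^2 b_m^2} \leq 4\sum_m \frac{E(Z_m^2)}{a_m^2 b_m^2} < \infty.
\]
Hence the martingale $M_n:=\sum_{m=1}^n \xi_m/(a_m b_m)$ is $L^2$-bounded and converges $\PP$-a.s.\ (and in $L^2$) to some $M$. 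This convergence is the single analytic input I will use to handle the martingale part in both statements.

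For part (i), I split
\[
\frac{1}{b_n}\sum_{m=1}^n \frac{Z_m}{a_m} = \frac{1}{b_n}\sum_{m=1}^n b_m\!\cdot\!\frac{\xi_m}{a_m b_m} + \frac{1}{b_n}\sum_{m=1}^n \frac{\eta_m}{a_m}.
\]
Since $b_n\uparrow+\infty$ and $\sum_m \xi_m/(a_m b_m)$ converges $\PP$-a.s., Kronecker's lemma forces the first sum to vanish $\PP$-a.s. For the second, the weights $w_{n,m}:=1/(a_m b_n)\mathbf 1_{\{m\leq n\}}$ are nonnegative, satisfy $w_{n,m}\to 0$ for each fixed $m$ (as $b_n\to\infty$) and $\sum_{m=1}^n w_{n,m}\to \gamma$ by assumption; Toeplitz's lemma applied to $\eta_m\to Z$ gives the limit $\gamma Z$.

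For part (ii) the $\eta$-piece is analogous: writing $b_n\sum_{m\geq n}\eta_m/(a_m b_m^2) = Z\,b_n\sum_{m\geq n}1/(a_m b_m^2) + b_n\sum_{m\geq n}(\eta_m-Z)/(a_m b_m^2)$, the first term converges to $\gamma Z$, and the second is dominated, for $n$ large so that $|\eta_m-Z|<\varepsilon$ for all $m\geq n$, by $\varepsilon\, b_n\sum_{m\geq n}1/(a_m b_m^2)\to \varepsilon\gamma$, so it vanishes by letting $\varepsilon\downarrow 0$. The delicate step, and the one I expect to be the main obstacle, is to show $b_n\sum_{m\geq n}\xi_m/(a_m b_m^2)\to 0$ $\PP$-a.s. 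The idea is to rewrite $\xi_m/(a_m b_m^2)=(M_m-M_{m-1})/b_m$ and apply Abel summation on the tail: for $N>n$,
\[
\sum_{m=n}^{N}\frac{M_m-M_{m-1}}{b_m}
= \frac{M_N}{b_N}-\frac{M_{n-1}}{b_n}+\sum_{m=n}^{N-1} M_m\!\left(\frac{1}{b_m}-\frac{1}{b_{m+1}}\right).
\]
Since $M_N\to M$ $\PP$-a.s.\ and $b_N\to\infty$, letting $N\to\infty$ and multiplying by $b_n$ gives
\[
b_n\sum_{m\geq n}\frac{\xi_m}{a_m b_m^2} = -M_{n-1} + \sum_{m\geq n} M_m\,b_n\!\left(\frac{1}{b_m}-\frac{1}{b_{m+1}}\right).
\]
The weights $v_{n,m}:=b_n(1/b_m-1/b_{m+1})$ for $m\geq n$ are nonnegative and telescope to $b_n\cdot 1/b_n=1$, so they form a Toeplitz-type averaging kernel on the tail; applied to $M_m\to M$ the right-hand side converges $\PP$-a.s.\ to $-M+M=0$, which finishes part (ii). Combining the $\xi$- and $\eta$-limits proves both claims.
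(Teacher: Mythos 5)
This statement is one of the two results the paper merely \emph{recalls} from the literature (it is quoted as Lemma 4.1 of Crimaldi et al.\ 2016) and is given no proof in the paper, so there is no in-paper argument to compare against; your proof therefore has to stand on its own, and it does. The decomposition $Z_m=\xi_m+\eta_m$ into a martingale-difference part and a predictable part is the right move: orthogonality (or your cruder bound $E(\xi_m^2)\le 4E(Z_m^2)$) makes $M_n=\sum_{m\le n}\xi_m/(a_mb_m)$ an $L^2$-bounded martingale, Kronecker's lemma kills the $\xi$-contribution in (i), and the Toeplitz weights $w_{n,m}=1/(a_mb_n)$ (nonnegative, row sums $\to\gamma$, each column $\to 0$) handle the $\eta$-part. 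In (ii), the $\varepsilon$-argument for the $\eta$-tail is fine, and the Abel summation identity for the $\xi$-tail is correctly computed; the weights $v_{n,m}=b_n(1/b_m-1/b_{m+1})$ are nonnegative because $b_n$ is increasing and telescope to $1$, so the tail average of $M_m$ converges to $M$ and cancels $-M_{n-1}\to -M$. This is in substance the argument of the cited reference, reconstructed correctly; I see no gaps.
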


\section{Proofs for Section 2%\ref{sec:exch}
}\label{sec:proof2}
Before proving Proposition 2.6, %\cref{prop:credible}, 
we give a preliminary convergence result.
\begin{proposition} \label{prop:pred}
Let $(X_n)_{n \geq 1} \sim \PP$ 
be a c.i.d. sequence of real-valued r.v.'s, with predictive rule $(P_n)_{n \geq 0}$, and take ${\bf t}=(t_1,\dots,t_k)$  such that $\PP(X_1\in \{t_1,\dots,t_k\})=0$. 
If the following conditions hold:
\begin{align*}
    &E(\sup_n \sqrt n |\Delta_{t_i,n}|)<+\infty&i=1,\dots,k\\
    &\sum_{n=1}^\infty n^2 E(\Delta_{t_i,n}^4)<+\infty&i=1,\dots,k\\
    &E(n^2 {\bf \Delta}_{{\bf t},n}{\bf\Delta}_{{\bf t},n}^T\mid X_1,\dots,X_n)\rightarrow U_{\bf t}  &\PP\mbox{-a.s.}
    \end{align*}
for a positive definite random matrix 
$U_{\bf t}$, %$U_{t_1, \ldots, t_k} =[U_{t_i, t_j}]_{i,j=1, \ldots, k}$
then,  $\PP$-a.s., 
\begin{equation*}
 \sqrt n \left[\begin{array}{c}\tilde F(t_1)-P_n(t_1) %P_n(t_1)
 \\ \dots \\\tilde F(t_k)-P_n(t_k) %P_n(t_k)
 \end{array}\right] \mid X_1,\dots,X_n
 \overset{d}{\rightarrow} \Norm_k(0, U_{\bf t}).
\end{equation*}
\end{proposition}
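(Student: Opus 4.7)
The plan is to reduce the multivariate conditional CLT to a family of scalar ones via the Cram\'er--Wold device and apply Theorem \ref{th:crimaldi} to each scalar projection. For each $\lambda\in\mathbb R^k$, set $M_n^\lambda=\lambda^\top\mathbf P_n(\mathbf t)$. Since each component $P_n(t_i)$ is a uniformly bounded $\mathcal G_n$-martingale converging a.s.\ and in $L^1$ to $\tF(t_i)$ (the basic predictive martingale for a c.i.d.\ sequence), so is $M_n^\lambda$, with limit $M^\lambda=\lambda^\top\tilde{\mathbf F}(\mathbf t)$. I will apply Theorem \ref{th:crimaldi} to this scalar martingale for every fixed $\lambda$.

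Condition (i) of Theorem \ref{th:crimaldi} follows from the pointwise bound
\[
  \sup_n\sqrt n\,|M_n^\lambda-M_{n-1}^\lambda|\;\leq\;\sum_{i=1}^k|\lambda_i|\,\sup_n\sqrt n\,|\Delta_{t_i,n}|,
\]
which is integrable by the first moment hypothesis. Condition (ii) is the hard part: I must show
\[
  U_n^\lambda = n\sum_{m\geq n}(M_m^\lambda-M_{m-1}^\lambda)^2
  = \lambda^\top\Bigl(n\sum_{m\geq n}\mathbf\Delta_{\mathbf t,m}\mathbf\Delta_{\mathbf t,m}^\top\Bigr)\lambda
\]
converges a.s.\ to $\lambda^\top U_{\mathbf t}\lambda$. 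It is therefore enough to establish, entrywise, the a.s.\ convergence $n\sum_{m\geq n}\Delta_{t_i,m}\Delta_{t_j,m}\to (U_{\mathbf t})_{ij}$ for $i,j=1,\dots,k$.

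This is exactly what Theorem \ref{lemma:series}(ii) delivers once I take $Z_m=m^2\Delta_{t_i,m}\Delta_{t_j,m}$, $a_m=1$, $b_m=m$. The a.s.\ convergence of $E(Z_m\mid\mathcal G_{m-1})$ to $(U_{\mathbf t})_{ij}$ is exactly the content of the third hypothesis; the summability $\sum_m E(Z_m^2)/(a_m^2 b_m^2)=\sum_m m^2 E(\Delta_{t_i,m}^2\Delta_{t_j,m}^2)<\infty$ follows from Cauchy--Schwarz and the fourth-moment hypothesis; and the rate condition $b_n\sum_{m\geq n}(a_m b_m^2)^{-1}=n\sum_{m\geq n}m^{-2}\to 1$ is elementary, so $\gamma=1$. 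Intersecting the finitely many entrywise a.s.\ events yields $U_n^\lambda\to\lambda^\top U_{\mathbf t}\lambda$ a.s., strictly positive for $\lambda\neq 0$ by positive definiteness of $U_{\mathbf t}$, and Theorem \ref{th:crimaldi} gives $\PP(\sqrt n(M^\lambda-M_n^\lambda)\leq z\mid\mathcal G_n)\to\Phi(z(\lambda^\top U_{\mathbf t}\lambda)^{-1/2})$ a.s., for every $z\in\mathbb R$ and every fixed $\lambda$.

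It remains to promote this ``for each $\lambda$, a.s.'' statement to the desired joint ``a.s., for every $\lambda$'' conditional weak convergence. To do so I fix a countable dense set $\{\lambda^{(j)}\}\subset\mathbb R^k$ and intersect the countably many probability-one sets produced above, together with the entrywise a.s.\ convergence of the conditional covariance matrix $n\sum_{m\geq n}\mathbf\Delta_{\mathbf t,m}\mathbf\Delta_{\mathbf t,m}^\top$. On this single event of probability one, the conditional laws are tight (their covariance matrices converge to $U_{\mathbf t}$) and every subsequential weak limit is determined by its projections along $\{\lambda^{(j)}\}$; by Cram\'er--Wold each such limit must equal $\Norm_k(0,U_{\mathbf t})$, yielding the conclusion. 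The step I expect to be the main obstacle is the verification of condition (ii): one needs the correct choice of scalings $(a_m,b_m)$ in Theorem \ref{lemma:series} and must carry out the variance--covariance control at the level of individual entries before recombining; everything else is structural.
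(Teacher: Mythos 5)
Your proposal is correct and follows essentially the same route as the paper's proof: Cram\'er--Wold reduction, the conditional CLT of Theorem \ref{th:crimaldi} applied to the scalar martingale $\lambda^\top \mathbf P_n(\mathbf t)$, and Theorem \ref{lemma:series}(ii) with $a_m=1$, $b_m=m$ to verify condition (ii). The only (harmless) deviations are that you apply Theorem \ref{lemma:series}(ii) entrywise to $m^2\Delta_{t_i,m}\Delta_{t_j,m}$ and recombine, whereas the paper applies it directly to $m^2(\lambda^\top\mathbf\Delta_{\mathbf t,m})^2$, and that you spell out the countable-dense-set argument needed to turn the ``for each $\lambda$, a.s.''\ statements into a single a.s.\ conditional weak convergence, a step the paper leaves implicit.
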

\begin{proof}
By Cram\'er-Wald device, it is sufficient to show that, for every vector $u=[u_1\dots u_k]^T$ with $||u||=1$, every $z\in \mathbb R$, $\PP$-a.s.
\begin{align*}
    \PP(\sqrt n \sum_{i=1}^ku_i(\tilde F(t_i)-P_n(t_i))\leq z\mid X_1,\dots,X_n)
\rightarrow \Phi((u^T U_{\bf t}u)^{-1/2}z),
\end{align*}
where $\Phi$ denotes the standard normal cumulative distribution function.
The proof is based on \cref{th:crimaldi} and \cref{lemma:series}. 

The sequence $(X_n)_{n\geq 1}$ is c.i.d., thus $(P_n)_{n\geq 0}$ is a martingale with respect to the natural filtration $(\mathcal G_n)_{n\geq 0}$ of $(X_n)_{n\geq 1}$. Moreover the asymptotic directing random measure $\tilde F$ of $(X_n)_{n\geq 1}$ is the $\PP$-a.s. limit of $P_n$ (in the topology of weak convergence) and satisfies, for every $n\geq 1$, $E(\tilde F\mid \mathcal G_n)=P_n$. In particular, $E(\tilde F)=P_0$, which implies that
$E(\tilde F\{t_1,\dots,t_k\})=P_0(\{t_1,\dots,t_k\})=0$. It follows that $\tilde F(\{t_1,\dots,t_k\})=0$ ($\PP$-a.s.). Hence the bounded martingale $(P_n(t_1),\dots,P_n(t_k))_{n\geq 0}$ converges $\PP$-a.s. and in $L^1$ to $(\tilde F(t_1),\dots,\tilde F(t_k))$. 

For every $n\geq 0$, let $M_n=\sum_{i=1}^k u_iP_n(t_i)$. Then, $(M_n)_{n\geq 0}$ is a martingale with respect to $(\mathcal G_n)_{n\geq 0}$, converging in $L^1$ to $\sum_{i=1}^k u_i\tilde F(t_i)$. Notice that, for every $n\geq 1$, $M_n-M_{n-1}=\sum_{i=1}^ku_i\Delta_{t_i,m}$. To prove that condition {\rm i)} of \cref{th:crimaldi} holds we can write that
\begin{align*}
    \sup_n\sqrt n\left|\sum_{i=1}^ku_i\Delta_{t_i,n}\right|
    \leq\sup_n\sqrt n\sum_{i=1}^k\left|\Delta_{t_i,n}\right|
    \leq\sum_{i=1}^k\sup_n\sqrt n\left|\Delta_{t_i,n}\right|.
\end{align*}
Thus,
\begin{align*}
    &E\left(\sup_n\sqrt n\left|\sum_{i=1}^ku_i\Delta_{t_i,n}\right|\right)\leq \sum_{i=1}^kE\left(\sup_n\sqrt n\left|\Delta_{t_i,n}\right|\right),
\end{align*}
which is finite by the first assumption of \cref{prop:pred}.
To verify condition {\rm ii)} of \cref{th:crimaldi}, we employ \cref{lemma:series}{\rm ii)}, with $$Z_n=n^2(M_n-M_{n-1})^2=n^2(\sum_{i=1}^ku_i\Delta_{t_i,n})^2,$$ $u_n=1$ and $b_n=n$. First,
\begin{align*}
    E(Z_n\mid \mathcal G_{n-1})&=E\left(n^2(\sum_{i=1}^k u_i\Delta_{t_i,n})^2\mid\mathcal G_{n-1}\right)\\
    &=\sum_{i,j=1}^k u_iu_jE\left(n^2\Delta_{t_i,n}\Delta_{t_j,n}\mid\mathcal G_{n-1}\right),
\end{align*}
which converges $\PP$-a.s. to $u^TU_{\bf t}u$ by the third assumption of \cref{prop:pred}.
Furthermore
\begin{align*}
   &\sum_{n=1}^\infty n^2E\left(\left(\sum_{i=1}^k u_i\Delta_{t_i,n}\right)^4\right)
\leq  \sum_{n=1}^\infty n^2E\left(\left( \sum_{i=1}^k \Delta_{t_i,n}^2\right)^2\right)\\
    &\quad\leq  k^2 \sum_{n=1}^\infty n^2E\left(\left( \frac 1 k \sum_{i=1}^k \Delta_{t_i,n}^2\right)^2\right)
   \leq  k^2 \sum_{n=1}^\infty n^2E\left(\frac 1 k \sum_{i=1}^k \Delta_{t_i,n}^4\right)\\
      &\quad\leq  k  \sum_{i=1}^k\sum_{n=1}^\infty n^2E\left( \Delta_{t_i,n}^4\right),
\end{align*}
which is finite by the second assumption of \cref{prop:pred}. It follows by \cref{lemma:series} that
\begin{align*}
    n\sum_{m\geq n}(M_m-M_{m-1})^2&=b_n\sum_{m\geq n}\frac{Z_m}{b_m^2}
\end{align*}
converges $\PP$-a.s. to the $\PP$-a.s. limit of $E(Z_n\mid \mathcal G_{n-1})$, that is to $u^TU_tu$. This proves that assumption ii) of \cref{th:crimaldi} holds. Therefore, for every $z\in\mathbb R$, $\PP$-a.s.,
\begin{align*}
   &\PP(\sqrt n \sum_{i=1}^ku_i(\tilde F(t_i)-P_n(t_i))\leq z\mid X_1,\dots,X_n)\\
   &=\PP(\sqrt n (M_n-M)\leq z\mid X_1,\dots,X_n)\\
   &\stackrel{a.s.}{\rightarrow}\Phi((u^TU_{\bf t}u)^{-1/2}z).
\end{align*}
\end{proof}

\begin{proof}[Proof of Proposition 2.6%\cref{prop:credible}
]
By the properties of stable convergence and \cref{prop:pred}, it is sufficient to show that $V_{n,{\bf t}}$ is positive definite for $n$ large enough, and converges to $U_{\bf t}$, $\PP$-a.s, with respect to the operator norm $||\cdot ||_{op}$. Since $U_{\bf t}$ is positive definite, 
then $V_{n,{\bf t}}$ is positive definite for $n$ large enough if $(V_{n,{\bf t}})_{n\geq 1}$ converges to $U_{\bf t}$ in the operator norm. Thus, it is sufficient to show that $(V_{n,{\bf t}})_{n\geq 1}$ converges to $U_{\bf t}$ in the operator norm, or, equivalently that, for every vector $u=[u_1\dots u_k]^T$ with $||u||=1$, 
$$
u^T V_{n,{\bf t}}u
\rightarrow u^T U_{\mathbf t} u, 
$$
$\PP$-a.s. Notice that
\begin{align*}
    u^T V_{n,{\bf t}}u 
   & =\sum_{i,j=1}^k u_iu_j \frac{1}{n}\sum_{m=1}^n m^2\Delta_{t_i,m}\Delta_{t_j,m}
=\frac 1 n \sum_{m=1}^n m^2\left(\sum_{i=1}^k u_{t_i}\Delta_{t_i,m}\right)^2
\\&=\frac 1 n \sum_{m=1}^n Z_m,
\end{align*}
with $Z_1,Z_2,\dots$ defined as in the proof of \cref{prop:pred}. Since we are working under the assumptions of \cref{prop:pred}, we can employ all the findings in its proof. In particular, we know that 
$(E(Z_n\mid\mathcal G_{n-1}))_{n\geq 1}$ converges $\PP$-a.s to $u^T U_{\bf t}u $, 
and $\sum_{n=1}^\infty E(Z_n^2)/n^2<+\infty$. By \cref{lemma:series} {\rm i)} with $a_n=1$  and $b_n=n$, we obtain that
$
\frac{1}{n}\sum_{m=1}^n Z_m
$
converges $\PP$-a.s. to $u^T U_{\bf t} u$, which completes the proof.
\end{proof}

\section{Proofs for Section 3 %\ref{sec:methodsConstruction}
}\label{sec:proof3}

\begin{proof}[Proof of Theorem 3.3%\cref{th:sufficient}
]
Consider a countable class of convergence determining sets $A$ such that $P_0(\partial A)=0$. For every $A$ in the class, $\tilde P(\partial A)=0$, $\PP-a.s.$ Thus, 
\begin{equation}
\label{eq:convergence}
T(\hat F_n)\rightarrow  T(\tilde F), \quad q_n(A, T(\hat F_n))\rightarrow \tilde F(A)
\end{equation}
hold for every $A$ in the class, with probability one. Fix $A$.
Since $(q_n(A, t))_{n\geq 0}$ are continuous in $t$, uniformly with respect to $t$ and $n$, then for every $\epsilon>0$ there exists $\delta=\delta(A)$ such that $$|q_n(A,t)-q_n(A,t')|<\epsilon$$ for every $n$, whenever $t,t'\in\mathbb T$ satisfy $||t-t'||<\delta$. 
Let $\omega$ and $\omega'$ be such that \eqref{eq:convergence} holds for $\omega$ and $\omega'$, and $T(\tilde F)(\omega)=T(\tilde F)(\omega').$
Then, for $n$ large enough $||T(\hat F_n)(\omega)-
T(\hat F_n)(\omega')||<\delta$, which implies that, for $n$
sufficiently large $$|q_n(A, T(\hat F_n(\omega)))-
q_n(A, T(\hat F_n(\omega')))|<\epsilon.$$ In turn, this 
implies that $|\tilde F(A)(\omega)-\tilde F(A)(\omega')|
<\epsilon$. Since this is true for every $\epsilon$, then 
$\tilde F(A)(\omega)=\tilde F(A)(\omega')$. Since the class of sets $A$ is countable, then there exists a set $N\in \mathcal F$ with $\PP(N)=0$, such that $\tilde F(\cdot)(\omega)=\tilde F(\cdot)(\omega')$ for every $\omega,\omega'\in N^c$ satisfying 
$T(\tilde F)(\omega)=T(\tilde F)(\omega')$. It follows that there exists a function $F(\cdot\mid t)$  such that 
$\tilde F(\cdot)(\omega)=F(\cdot\mid T(\tilde F(\omega)))$ for every $\omega\in N^c$. 
Extending arbitrarily $F(\cdot \mid t)$ outside the set $T(\tilde F(N^c))$, we obtain $\tilde F(\cdot)=F(\cdot\mid T(\tilde F))$, $\PP$-a.s. The measurability of  $F$ can be proved by classical arguments.
\end{proof}

\begin{proof}[Proof of Proposition 3.7% \cref{prop:suffDP}
] 
For any $k\geq 1$ and any partition $(A_1, \ldots, A_k)$,  we have from \cite{zabell1982} that there exists $(\alpha_{A_1}, \ldots, \alpha_{A_k})$ such that 
\begin{align*} \label{eq:suff-DP-partition}
    \PP(X_{n+1} \in A_j \mid x_{1:n}) = \PP(X_{n+1} \in A_j \mid n_{A_j}) 
    =\frac{ \alpha_{A_j} + n_{A_j}}{\alpha_{A_1, \ldots, A_k} + n}  ,
 \end{align*}
where, for each set $A$,  $n_A$ is the number of observations $x_1,\dots,x_n$ in $A$, $\alpha_{A_1, \ldots, A_k}=\sum_{i=1}^k \alpha_{A_i}$ and the first equality comes from (3.4). %(\ref{eq:suff-DP}). 
Moreover, $(\!\tF(\!A_1\!)\!, \ldots,\tF(\!A_k\!)\!)$ $\sim Dirichlet(\alpha_{A_1}, \ldots, \alpha_{A_k})$.  Any set $A$ can be interpreted as an element of a finite partition, and, by the properties of the Dirichlet distribution, $\alpha_{A}$ does not depend on the partition. In particular, $\alpha \equiv \alpha_{A_1,\dots,A_k}$ does not depend on $A_1,\dots,A_k$. 
By the assumption of exchangeability, we have that $E(\tF(A))= \PP(X_1 \in A)= P_0(A)$ for every $A$. On the other hand, by the properties of the Dirichlet distribution,  
$E(\tF(A))= \alpha_{A}/\alpha $, for any $A$.  Hence $\alpha_A=\alpha P_0(A)$, which implies that, for every partition $(A_1,\dots,A_k)$, $(\tF(A_1), \ldots, \tF(A_k)) \sim Dirichlet(\alpha  P_0(A_1), \dots, \alpha  P_0(A_k))$. 
Since all the finite dimensional distributions of $\tilde F$ coincide with the ones of a Dirichlet
process with parameters  $(\alpha, P_0)$, then  $\tilde F\sim DP(\alpha, P_0)$.%\\
\end{proof}

\noindent{\em Remark.}
    The proof of Proposition 3.7 %\cref{prop:suffDP} 
    relies on the characterization of the Dirichlet distribution (i.e. a Dirichlet process on a finite set $\{1,\dots,k\}$)  by Zabell  \cite{zabell1982}.  Zabell assumes that $P_0(\{j\})>0$ for every $j=1,\dots,k$. However, if $P_0(\{j\})=0$ for a specific $j$, one can apply the result to the reduced space where $j$
    has been removed, demonstrating that 
$\tF$ restricted to this reduced space follows a Dirichlet distribution. The law of $\tF$ on the entire space $\{1,\dots,k\}$ can then be obtained by setting 
  $\tilde F(\{j\})=0$. This distribution is still referred to as Dirichlet distribution (see e.g. \cite{ferguson1973}).

\section{Proofs for Section 4 %\ref{sec:partial exchangeability}
}\label{sec:proof4}

\begin{proof}[Proof of Theorem 4.4%\cref{th:partial-by-pred}
]
First we prove that the conditions {\rm i)} and {\rm ii)} are necessary for partial exchangeability. 
Partial exchangeability implies that, for every  $n\geq 2$, $k\leq M$ finite, $(A_{m,j})_{1\leq m\leq n+1,1\leq j\leq k}$ and permutation ${\sigma}_j$ of $\{1,\dots,n\}$ ($j=1,\dots,k$)
\begin{align*}
&\mathbb P(\cap_{j\leq k,m\leq n+1}(X_{m,j}\in A_{m,j}))\\
&=\!\mathbb P(\cap_{j\leq k,m\leq n}(X_{m,j}\!\in \!A_{{\sigma}_j^{-1}(m),j})\!\cap\!\cap_{j\leq k}(\!X_{n+1,j}\!\in\! A_{n+1,j}\!)\!).
\end{align*}
Hence,
\begin{align*}
\int_{\times_{m\leq n,j\leq k}A_{m,j} }\!\!\!\!\!\!\!\!\!\!\!\!\!\!\!\!\!\!\!\!\!\!\!\!\!\!\!\!\!\!
\PP(\cap_{j=1}^k (X_{n+1,j}\in \!A_{n+1,j})\mid (x_{m,j})_{m\leq n,j\leq k})
 \PP(\cap_{m\leq n,j\leq k}(X_{m,j}\in dx_{m,j}))
\end{align*}
\begin{align*}
=\int_{\times_{m\leq n,j\leq k} A_{{\sigma}_j^{-1}(m),j} }\!\!\!\!\!\!\!\!\!\!\!\!\!\!\!\!\!\!\!\!\!\!\!\!\!\!
\!\!\!\!\!\!\!\!\!\!\!\!\!\!\!\!\!\!\PP(\cap_{j=1}^k (X_{n+1,j}\in A_{n+1,j})\mid (x_{m,j})_{m\leq n,j\leq k})
\mathbb P(\cap_{m\leq n,j\leq k} (X_{m,j}\in dx_{m,j}))
\end{align*}
\begin{align*}
=\int_{\times_{m\leq n,j\leq k} A_{m,j} }\!\!\!\!\!\!\!\!\!\!\!\!\!\!\!\!\!\!\!\!\!\!\!\!\!\!\!\!\!\!\!\!\!
\PP(\cap_{j=1}^k(X_{n+1,j}\in A_{n+1,j})\mid (x_{{\sigma}_j(m),j})_{m\leq n,j\leq k})
 \mathbb P(\cap_{m\leq n,j\leq k} (X_{m,j}\in dx_{{\sigma}_j(m),j}))
\end{align*}
\begin{align*}
=\int_{\times_{m\leq n,j\leq k } A_{m,j} }\!\!\!\!\!\!\!\!\!\!\!\!\!\!\!\!\!\!\!\!\!\!\!\!\!\!\!\!\!\!\!\!
\PP(\cap_{j=1}^k (X_{n+1,j}\in A_{n+1,j})\mid (x_{{\sigma}_j(m),j})_{j\leq k,m\leq n})
\mathbb P(\cap_{m\leq n,j\leq k} (X_{m,j}\in dx_{m,j})).
\end{align*}
Thus, $\PP(\cap_{j=1}^k (X_{n+1,j}\in A_{n+1,j})\mid (x_{m,j})_{m\leq n,j\leq k})$ $= \PP(\cap_{j=1}^k (X_{n+1,j}\in A_{n+1,j})\mid (x_{{\sigma}_j(m),j})_{j\leq k,m\leq n})$, which proves {\rm i)}. 

Let us now prove {\rm ii)}.  
By partial exchangeability, we can write, for every $n$ and $A_{m,j}$, $A_j$ and $B_j$ ($j=1,\dots,k,m=1,\dots,n$), and every $i=1,\dots,k$,
\begin{align*}
&\mathbb P\Bigl(\cap_{m\leq n,j\leq k}(X_{m,j}\in A_{m,j})\cap \cap_{j=1}^k(X_{n+1,j}\in A_j)
 \cap\cap_{j=1}^k (X_{n+2,j}\in B_j\Bigr)\\
 &=\mathbb P\Bigl(\cap_{m\leq n,j\leq k }(X_{m,j}\in A_{m,j})\cap \cap_{j\leq k,j\neq i}(X_{n+1,j}\in A_j)\\
&\qquad\cap\cap_{j\leq k, j\neq i} (X_{n+2,j}\in B_j)
\cap(X_{n+1,i}\in B_i) \cap(X_{n+1,i}\in A_i) \Bigr).
\end{align*}
Hence,
\begin{multline*}
\int_{\times_{m\leq n,j\leq k} 
 A_{m,j}\times \times_{j\leq k,j\neq i}A_j\times B_j}  \!\!\!\!\!\!\!\!\!\!\!\!\!\!\!\!\!\!\!\!\!\!\!\!\!\!\!\!\!\!\!\!\!\!\!\!\!\!\!\!\!\!\!\!\!\!\!\!\!\!\mathbb P(\cap_{m\leq n,j\leq k }(X_{m,j}\in dx_{m,j})) \\
 \int_{ A_{i}}\PP(X_{n+1,i}\in dx_{n+1,i}\mid (x_{m,j})_{m\leq n,j\leq k})\\
 \PP(X_{n+2,i}\in B_i\mid (x_{m,j})_{m\leq n+1,j\leq k})
 \end{multline*}
 \begin{multline*}
      =\int_{\times_{m\leq n,j\leq k} 
 A_{m,j}\times \times_{j\leq k,j\neq i}A_j\times B_j} \!\!\!\!\!\!\!\!\!\!\!\!\!\!\!\!\!\!\!\!\!\!\!\!\!\!\!\!\!\!\!\!\!\!\!\!\!\!\!\!\!\!\!\!\!\!\!\!\!\! \mathbb P(\cap_{m\leq n,j\leq k }(X_{m,j}\in dx_{m,j})) \\
 \int_{ B_{i}}\PP(X_{n+1,i}\in dx_{n+1,i}\mid (x_{m,j})_{m\leq n,j\leq k})\\
\PP(X_{n+2,i}\in A_i\mid (x_{m,j})_{m\leq n+1,j\leq k}).
                    \end{multline*}
 Since the equality holds for every $(A_{m,j})_{1\leq m\leq n,1\leq j\leq k}$, it follows that  {\rm ii)} holds.          

We now prove that the conditions {\rm i)}-{\rm ii)} are sufficient for partial exchangeability. Since permutations can be obtained as combinations of permutations of adjacent elements, then it is sufficient to show that for every $i=1,\dots,k$, $n\geq 2$, $s=1,\dots,n-1$, and $(A_{m,j})_{1\leq m\leq n,1\leq j\leq k}$
$$
    \mathbb P(\cap_{j=1}^k\cap_{m=1}^n(X_{m,j}\in A_{m,j}))
    =\mathbb P(\cap_{j=1}^k\cap_{m=1}^n(X_{m,j}\in A_{{\sigma}_j(m),j})),
$$
where ${\sigma}_j$ is the identity for $j\neq i$ and ${\sigma}_i$ interchanges $s$ and $s+1$. For $s=n-1$ the property is a direct consequence of {\rm ii)}. 
For $s<n-1$, we have 
\begin{multline*}
   \PP(\cap_{m\leq n,j\leq k }(X_{m,j}\in A_{{\sigma}_j(m),j}))\\
    =\int_{\times_{m\leq s-1,j\leq k }A_{m,j}}\!\!\!\!\!\!\!\!\!\!\!\!\!\!\!\!\!\!\!\!\!\!\!\!\!\!\!\!\!\!\!\!\mathbb P(\cap_{m\leq s-1,j\leq k }(X_{m,j}\in dx_{m,j}))
     \int_{\times_{j\leq k}A_{{\sigma}_j(s),j}}\!\!\!\!\!\!\!\!\!\!\!\!\!\!\!\!\!\!\!\!\!\!\!\!\!\!\!\!\PP(\cap_{j=1}^k( X_{s,j}\in dx_{s,j})\mid (x_{m,j})_{m<s,j\leq k})\\
         \int_{\times_{j\leq k} A_{{\sigma}_j(s+1),j}}\!\!\!\!\!\!\!\!\!\!\!\!\!\!\!\!\!\!\!\!\!\!\!\!\!\!\!\!\!\!\!\!\!\!\PP(\cap_{j=1}^k(X_{s+1,j}\in  dx_{s+1,j})\mid (x_{m,j})_{m\leq s,j\leq k})\\
          \int_{\times_{ s+2\leq m\leq n,j\leq k }A_{m,j}}
          \!\!\!\!\!\!\!\!\!\!\!\!\!\!\!\!\!\!\!\!\!\!\!
           \!\!\!\!\!\!\!\!\!\!\!\!\!\!\!\!\!\!
   \textstyle{\prod_{m=s+1}^{n-1}} \PP(\cap_{j=1}^k (X_{m+1,j}\in  dx_{m+1,j})\mid (x_{l,j})_{l\leq m,j\leq k})
   \end{multline*}
   \begin{multline*}
         =\int_{\times_{m\leq s-1,j\leq k}A_{m,j}}\!\!\!\!\!\!\!\!\!\!\!\!\!\!\!\!\!\!\!\!\!\!\!\!\!\!\!\!
       \mathbb P(\cap_{m\leq s-1,j\leq k }(X_{m,j}\in dx_{m,j}))
     \int_{\times_{i\leq k}A_{s,j}}\!\!\!\!\!\!\!\!\!\!\!\!\!\!\PP(\cap_{j=1}^k(X_{s,j}\in dx_{s,j})\mid (x_{m,j})_{m<s,j\leq k})\\
      \int_{\times_{j\leq k} A_{s+1,j}}\!\!\!\!\!\!\!\!\!\!\!\!\!\!\!\!\!\!\!\!\!\!\!\!\!\!
      \PP(\cap_{j=1}^k(X_{s+1,j}\in dx_{s+1,j})\mid (x_{m,j})_{m\leq s,j\leq k})\\%P_{s}(\times_{i}dx_{s+1,i})\\
       \int_{\times_{s+2\leq m\leq n,j\leq k}A_{m,j}}\!\!\!\!\!\!\!\!\!\!\!\!\!\!\!\!\!\!\!\!\!\!\!\!\!\!\!\!\!\!\!\!\!\!\!\!\!
    \textstyle{\prod_{m=s+1}^{n-1}} \PP(\cap_{j=1}^k(X_{m+1,j}\in dx_{m+1,j})\mid (x_{l,j})_{l\leq m,j\leq k})%P_{t}(\times_{i} dx_{t+1,i} )
    \end{multline*}
    \begin{align*}
    =\PP(\cap_{m\leq n ,j\leq k }(X_{m,j}\in A_{m,j})),
         \end{align*}
     where the second equality comes from {\rm ii)} and the symmetry of the predictive distributions with respect to past observations.  This concludes the proof.
 \end{proof}   

\section{Proofs for Section 5 % \ref{sec:algorithms}
}\label{sec:proof5}

\begin{proof}[Proof of Proposition 5.1%\cref{prop:gradient}
] $\,$
\\
{\em {\rm i)}} 
By assumption, $P_X$ has bounded support, that we assume, with no loss of generality, to be included in $[0,1]^d$.
Let $u$ be a fixed $d$-dimensional vector satisfying $\Vert u\Vert=1$. It is immediate to prove by induction that, for each $n\geq 0$, $u^T\beta_n$ is square integrable. 
Now, we prove that
$(u^T\beta_n)_{n\geq 0}$ is a martingale with respect to the filtration $(\mathcal G_n)_{n\geq 0}$, with $\mathcal G_0=\sigma(\beta_0)$, and $\mathcal G_n=\sigma(\beta_0,X_1,Y_1,\dots,X_n,Y_n)$ for $n\geq 1$. Since $E(Y_{n}-g(X_{n},\beta_{n-1}))\mid \mathcal G_{n-1},X_n)=0$, then
\begin{align*}
&E(u^T\beta_n-u^T\beta_{n-1}\mid \mathcal G_{n-1})\\&=E(E(u^T\beta_n-u^T\beta_{n-1}\mid \mathcal G_{n-1},X_{n})\mathcal G_{n-1})\\
&=\frac{1}{n\log 2}E(u^TX_nE(Y_{n}-g(X_{n},\beta_{n-1}))\mid \mathcal G_{n-1},X_n)\mid \mathcal G_{n-1})\\
&=0.
\end{align*}
Next, we show that $(u^T\beta_n)_{n\geq 0}$ is uniformly integrable. We can write that
\begin{align*}
&E((u^T\beta_n)^2)\\
&=E(u^T(\beta_{n-1}+\frac{1}{n\log 2}u^TX_n (Y_{n}-g(X_{n},\beta_{n-1})))^2)\\
&=E((u^T\beta_{n-1})^2)\\
&+\frac{1}{(n\log 2)^2}
E((u^TX_n)^2(Y_{n}-g(X_{n},\beta_{n-1}))^2),
\end{align*}
with $Y_n$, $X_n$, $u$ and $g(X_n,\beta_{n-1})$ bounded. Thus,
$$
\sup_n E((u^T\beta_n)^2)<+\infty.$$
It follows that $(u^T\beta_n)_{n\geq 0}$ is a uniformly integrable martingale,  therefore it converges almost surely and in $L^1$ to a limit random variable. Since the limit is linear in $u$, then there exists $\tilde\beta$ such that $\beta_n$ converges to $\tilde\beta$ almost surely  and in $L^1$. Moreover $E(\tilde \beta\mid\mathcal G_n)=\beta_n$. 
\\
{\em {\rm ii)}} Let  $A\in\mathcal B(\mathbb R^d)$ be such that $P_X(\partial A)=0$ almost surely. By Theorem 2 in \cite{blackwell1962},
\begin{multline*}
\PP(X_{n+1}\in A,Y_{n+1}=1\mid \mathcal G_{n})\\
 =E(\mathbf 1_A(X_{n+1})
\mathbb P(Y_{n+1}=1\mid 
\mathcal G_n,X_{n+1})\mid \mathcal G_{n})
\end{multline*}
\begin{multline*}
=E(\mathbf 1_A(X_{n+1})g(X_{n+1},\beta_n)\mid \mathcal G_{n})\\
\stackrel{a.s.}{\rightarrow}\int_Ag(x,\tilde \beta) P_X(dx).
\end{multline*}
By Lemma 8.2 in \cite{aldous1985}, the sequence $(X_n,Y_n)_{n\geq 1}$ is 
$\tilde{P}_{X,Y}$-asymptotically exchangeable, with
\begin{align*}
 &\tilde P_{X,Y}(dx,dy)\\
 &=(g(x,\tilde\beta)\delta_1(dy)+(1-g(x,\tilde\beta))\delta_{0}(dy)) \, P_X(dx).
\end{align*}
\end{proof}

\begin{proof}[Proof of Proposition 5.2%\cref{prop:gradient2}
]
 The proof is based on Theorem 2.2 in \cite{crimaldi2009} and
 Lemma 4.1 in \cite{crimaldi2016} (see  \cref{th:crimaldi} and 
 \cref{lemma:series}).
 
 Let $u$ be a fixed $d$-dimensional  vector satisfying 
 $\Vert u\Vert =1$. 
 By the proof of Proposition 5.1, %\ref{prop:gradient}, 
 the sequence $(u^T\beta_n)_{n\geq 0}$ is a 
 martingale with respect to the filtration $(\mathcal G_n)_{n\geq 0}$, with $\mathcal G_0=\sigma(\beta_0)$ and $\mathcal G_n=\sigma(\beta_0,X_1,Y_1,\dots,X_n,Y_n)$ for $n\geq 1$. Moreover, $(u^T\beta_n)_{n\geq 0}$ converges $\PP$-a.s. and in $L^1$ to $u^T\tilde\beta$. We apply \cref{th:crimaldi} with $M_n=u^T\beta_n$ and $M=u^T\tilde \beta$. To prove 
 that the assumption  {\rm i)} of \cref{th:crimaldi} holds, we can 
 write 
\begin{align*}
&\sup_n\sqrt n|u^T\beta_n-u^T\beta_{n-1}|\\
    &\leq \sup_n  \frac{|(Y_n-g(X_n,\beta_{n-1}))u^TX_n|}{\sqrt n\log 2},
    \end{align*}
which is integrable, since the %r.v.'s %
random variables 
$(Y_n-g(X_n,\beta_{n-1}))u^TX_n$ are uniformly bounded. To prove that condition {\rm ii)} of \cref{th:crimaldi} holds, we  apply \cref{lemma:series} {\rm ii)} with $a_n=1$ and $b_n=n$. Let $Z_n=n^2(u^T\beta_n-u^T\beta_{n-1})^2$ for $n\geq 1$. 
Then, 
\begin{align*}
E(Z_n\mid \mathcal G_{n})
    &=(\log 2)^{-2}E( (Y_{n+1}-g(X_{n+1},\beta_{n}))^2(u^TX_{n+1})^2\mid \mathcal G_{n})\\
    & =(\log 2)^{-2} E((u^TX_{n+1})^2
    E( (Y_{n+1}-g(X_{n+1},\beta_{n}))^2\mid \mathcal G_{n},X_{n+1})
    \mid \mathcal G_{n},)\\
    &=(\log 2)^{-2}E((u^TX_{n+1})^2
    g(X_{n+1},\beta_{n})(1-g(X_{n+1},\beta_{n})))\mid \mathcal G_{n})\\
    & \stackrel{a.s.}{\rightarrow} (\log 2)^{-2}\int g(x,\tilde \beta)(1-g(x,\tilde \beta)(u^Tx)^2 P_X(dx)\\
    & =(\log 2)^{-2}u^T \int x x^T (g(x,\tilde \beta)(1-g(x,\tilde \beta))P_X(dx)\; u.
\end{align*}
By \cref{lemma:series} {\rm ii)}, 
$$
n\sum_{k\geq n}(u^T\beta_k-u^T\beta_{k-1})^2 \stackrel{a.s.}{\rightarrow}u^TUu,
$$
with
$$
    U=(\log 2)^{-2}\int x x^Tg(x,\tilde \beta)(1-g(x,\tilde \beta) P_X(dx).
$$
It follows from \cref{th:crimaldi} that
$$
E\left(e^{it\sqrt n(u^T\tilde \beta-u^T\beta_{n+1})}\mid \mathcal G_{n}\right)\stackrel{a.s.}{\rightarrow}e^{-\frac 1 2 t^2u^TU u}.
$$
Since this is true for every unitary vector $u$, then, for every $s$,
$$
E\left(e^{is^T\sqrt n(\tilde \beta-\beta_{n+1})}\mid \mathcal G_{n}\right)\stackrel{a.s.}{\rightarrow}\exp({-\frac 1 2 s^TUs}),
$$
which concludes the proof.
\end{proof}

\begin{proof}[Proof of Proposition 5.3%\cref{prop:gradient3}
]$\;$\\
{\em {\rm i)}} Let $u$ be a $d$-dimensional vector satisfying $\Vert u\Vert =1$. 
We  prove that $u^TV_nu$ converges to $u^TUu$, $\PP$-a.s.
We can write that 
\begin{align*}
&\frac 1 n \sum_{k=1}^n k^2u^T(\beta_k-\beta_{k-1})(\beta_k-\beta_{k-1})^Tu
    =\frac 1 n \sum_{k=1}^n k^2(u^T\beta_k-u^T\beta_{k-1})^2\\
    &=\frac 1 n \sum_{k=1}^n k^2E((u^T\beta_k-u^T\beta_{k-1})^2\mid \mathcal G_{k-1})+\frac 1 n \sum_{k=1}^n k^2\{(u^T\beta_k-u^T\beta_{k-1})^2\\
    &\quad -E((u^T\beta_k-u^T\beta_{k-1})^2\mid \mathcal G_{k-1})\}.
\end{align*}
For the first term, we can write that
\begin{align*}
&\frac 1 n \sum_{k=1}^n k^2E((u^T\beta_k-u^T\beta_{k-1})^2\mid \mathcal G_{k-1})\\&
=(\log 2)^{-2}\frac 1 n \sum_{k=1}^n E((Y_k-g(X_k,\beta_{k-1}))^2 (u^TX_k)^2\mid \mathcal G_{k-1}).
\end{align*}
From the proof of Proposition 5.2, %\cref{prop:gradient2}, 
we deduce that
\begin{align*}
    &E((Y_k-g(X_k,\beta_{k-1})^2(u^TX_k)^2\mid \mathcal G_{k-1})\\
    &\stackrel{a.s.}{\rightarrow }
\int g(x,\tilde \beta)(1-g(x,\tilde \beta)
    (u^T x)^2 P_X(dx),
    \end{align*}
as $k\rightarrow\infty$.
Hence, by \cref{lemma:series} {\rm i)} with $a_n=1$ and $b_n=n$,
\begin{align*}
    &\frac 1 n \sum_{k=1}^n k^2E((u^T\beta_k-u^T\beta_{k-1})^2\mid \mathcal G_{k-1})\\
&\stackrel{a.s.}{\rightarrow }(\log 2)^{-2}\int (g(x,\tilde \beta)(1-g(x,\tilde \beta))
    (u^T x)^2 P_X(dx)\\
    &=u^TUu.
\end{align*}
Let us now prove that
\begin{align*}
    &\frac 1 n \sum_{k=1}^n k^2\Bigl((u^T\beta_k-u^T\beta_{k-1})^2
 -E((u^T\beta_k-u^T\beta_{k-1})^2\mid \mathcal G_{k-1})\Bigr)\stackrel{a.s.}{\rightarrow}0.
\end{align*}
To this aim we can invoke the martingale law of large numbers (\cite{hall1980}, Theorem 2.18), that requests to show that
\begin{align*}
&\sum_{k=1}^\infty \frac{1}{k^2}E\Bigl(
k^2\bigl((u^T\beta_k-u^T\beta_{k-1})^2 -E((u^T\beta_k-u^T\beta_{k-1})^2)\bigr)^2\mid \mathcal G_{k-1}\Bigr)<+\infty.
\end{align*}
We can write that
\begin{align*}
&\sum_{k=1}^\infty \frac{1}{k^2}E\Bigl(
k^2\bigl((u^T\beta_k-u^T\beta_{k-1})^2
-E((u^T\beta_k-u^T\beta_{k-1})^2)\bigr)^2\mid \mathcal G_{k-1}\Bigr)\\
&\quad\leq \sum_{k=1}^\infty k^2E\Bigl((u^T\beta_k-u^T\beta_{k-1})^4\mid \mathcal G_{k-1}\Bigr)\\
&\quad\leq (\log 2)^{-4}\sum_{k=1}^\infty \frac{1}{k^2}E\Bigl((Y_k-g(X_k,\beta_{k-1}))^4
(u^TX_k)^4\mid \mathcal G_{k-1}\Bigr)<+\infty,
\end{align*}
where the last inequality holds true since the terms  $(Y_k-g(X_k,\beta_{k-1})u^TX_k)$ are uniformly bounded. We can conclude that $u^TV_nu$ converges almost surely to $u^TUu$. Since this is true for every unitary vector $u$, then $V_n$ converges almost surely to $U$, as $n\rightarrow\infty$. \\
{\em {\rm ii)}} From Proposition 5.2, %\cref{prop:gradient2}, 
we know that, for $\omega$ in a set with probability one,
\begin{align*}
&E(\exp({is^T\sqrt n(\tilde\beta-\beta_n)})\mid \mathcal G_{n})(\omega)
\rightarrow \exp({-\frac 1 2 s^TU(\omega )s}),
\end{align*}
and the convergence is uniform with respect to $s$ on compact sets.
If $\omega$ is such that  $V_n(\omega)$ converges to $U(\omega)$, then
\begin{align*}
E(\exp({i(s^TV_n^{-1/2})\sqrt n(\tilde\beta-\beta_n)})\mid \mathcal G_{n})(\omega)
&\rightarrow \exp\left(-\frac 1 2 s^TU(\omega)^{-1/2}U(\omega) U^{-1/2}(\omega )s\right)\\
&=\exp({-\frac 1 2 s^Ts}).
\end{align*}
This concludes the proof.
\end{proof}

\bibliographystyle{imsart-number} 
\bibliography{statsc}